\newcommand{\stkout}[1]{\ifmmode\text{\sout{\ensuremath{#1}}}\else\sout{#1}\fi}
\numberwithin{equation}{section}
\newtheorem{theorem}{Theorem}[section]
\newtheorem{lemma}[theorem]{Lemma}
\newtheorem{prop}[theorem]{Proposition}
\newtheorem{assum}[theorem]{Assumption}
\newtheorem{rem}[theorem]{Remark}
\newcommand{\vertiii}[1]{{\left\vert\kern-0.25ex\left\vert\kern-0.25ex\left\vert #1 
    \right\vert\kern-0.25ex\right\vert\kern-0.25ex\right\vert}}
\newcommand{\norm}[1]{\left\|#1\right\|_{m,\beta}}
\newcommand{\expect}[1]{\mathbb{E}\left[#1\right]}
\providecommand{\keywords}[1]{\textbf{\text{Key words.}} #1}
\providecommand{\AMS}[1]{\textbf{\text{AMS subject classifications.}} #1}
\begin{document}

\title{Strong convergence of  a  Verlet integrator for the semi-linear stochastic wave equation
}

\author[1]{Lehel Banjai\thanks{l.banjai@hw.ac.uk}}
\author[2,1]{Gabriel Lord\thanks{gabriel.lord@ru.nl}}
\author[1]{Jeta Molla\thanks{jm188@hw.ac.uk}}
\affil[1]{Maxwell  Institute  for  Mathematical  Sciences,  School  of  Mathematical  \&  Computer  Sciences, Heriot-Watt University, Edinburgh, EH14 4AS, UK}
\affil[2]{Department of Mathematics, IMAPP, Radboud University, Nijmegen, The Netherlands}

\maketitle

\begin{abstract}
The full discretization of the semi-linear stochastic wave equation is considered. The discontinuous Galerkin finite element method is used in space and analyzed in a semigroup framework, and an explicit  stochastic position Verlet  scheme is used for the temporal approximation. We study the stability under a CFL condition and prove optimal strong convergence rates of the fully discrete scheme. Numerical experiments illustrate our theoretical results. Further, we analyze and  bound  the expected energy and numerically show excellent agreement with the energy of the exact solution.  
\end{abstract}

\keywords{
 semi-linear stochastic wave equation,  stochastic Verlet integration,  strong convergence, discontinuous Galerkin finite element method, stability, energy conservation}

\AMS{
60H15, 60H35, 65C30, 65C20,  65M60}

\section{Introduction} 
We study the semi-linear stochastic wave equation  driven by  additive noise
\begin{equation} \label{wave}
\begin{aligned}
 &{\rm d}\dot{u}=\nabla\cdot ({\bf D} \nabla u)  {\rm d}t+f(u){\rm d}t +{\rm d}W  &&{\rm in}\;\; {\cal D}\times(0,T),\\
&\; u= 0  &&{\rm on } \; \;\partial {\cal D} \times(0,T),\\
&\;  u(\cdot,0) = u_0,\; \dot{u}(\cdot,0)= v_0 & &{\rm in} \;\; {\cal D},
\end{aligned}
\end{equation}
where ${\cal D}\subset \mathbb{R}^d,$ $d=1,2,3,$ is a bounded convex polytopal domain with boundary $\partial {\cal D}$ and $\dot{u} = \partial_t u$ stands for the time derivative, 
$f$ is a globally Lipschitz nonlinear function, and $\{ W(t)\}_{t\geq0}$ is a $Q$-Wiener process with respect to a normal filtration $\{{\cal F}_t \}_{t\geq0}$ on a filtered probability space $\left(\Omega, {\cal F}, \mathbb{P}, \{ {\cal F}_t\}_{t\geq0} \right)$. We give our assumptions on the noise and $f$ in detail in Section \ref{sec:not}.
The initial data $u_0$ and $v_0$ are ${\cal F}_0$-measurable variables.
We assume that ${\bf D}=(d_{ij}(x))_{1\leq i,j\leq d}$ is a symmetric uniformly positive definite matrix that  satisfies the bounds  
\begin{equation} \label{eq:wavespeed}
0< d_{\min}\leq \zeta^T{\mathbf{D}(x)} \zeta \leq d_{\max} < \infty, \quad \text{for all }\zeta \in \mathbb{R}^d, \;|\zeta| = 1,\; x\in \cal D.
\end{equation}
The stochastic wave equation is of fundamental importance in various   applications.  For instance, the motion of a strand of DNA floating in a liquid \cite{dalang};  the dynamics of the primary current density vector field within the grey matter of the human brain \cite{galka2008}; or the vibration of a string under the action of stochastic forces \cite{orshinger}.  Our motivating example arises from the sound propagation in the sea, critical for marine reserves and conservation of species \cite{ikpekha2014, etter2012}. In particular we note that in the marine environment the noise is typically only over a finite range of frequencies and also that often complex computational domains are of interest where sound waves may interact with the shore. The complex geometry motivates the use of the dG method in space and we pay attention not only to space-time rough noise forcing but also to more regular noise in space.

One  advantage the dG method offers over the standard continuous finite element methods is that the mass matrix has a block diagonal structure; it can therefore be inverted at  a very low computational cost.  Hence, the dG method leads to efficient time integration when combined with an explicit time-stepping scheme. 
In the present paper, we propose such full discretization of \eqref{wave} by applying the dG finite element method in space \cite{Arnold1982} and  a stochastic extension of  the explicit position Verlet time-stepping method in time \cite{casas2016, serna2014}. 
The position Verlet scheme is a variant of the St\"{o}rmer-Verlet/leapfrog method and it shares their important geometric properties such as symplecticity. We refer the reader to \cite{hairer2003} for a review on the St\"{o}rmer-Verlet/leapfrog integrators.
Stochastic extensions of the position Verlet or St\"{o}rmer-Verlet/leapfrog time-stepping schemes have  been used for second-order ordinary stochastic differential equations, see e.g. \cite{burrage2007, jensen2013, cohen2019, farago2019} and references therein.

In recent years, strong approximations of  stochastic wave equations  have been studied by many authors \cite{anton2015,cohen2013,Kovacs2010, yin2007, schurz2008, wang2014, wang20142, cui2019, walsh2006, sard2006, cohen2015}.
We first comment on work analyzing the spatial discretization of  stochastic wave equations. Strong convergence estimates for the  continuous finite element approximation of  the linear stochastic wave equation and of (\ref{wave}) with multiplicative noise  were derived in \cite{Kovacs2010} and \cite{anton2015}, respectively. In  \cite{schurz2008}, \cite{wang2014}, and \cite{yin2007} the spectral Galerkin method for one-dimensional semi-linear stochastic wave equations driven by additive noise was used in space and strong convergence rates were proved. In addition, in \cite{wang20142} the  stochastic wave equation with Lipschitz continuous nonlinearity and multiplicative noise is discretized by the spectral method in space. Further, the recent preprint \cite{cui2019} studies the spectral Galerkin approximation of the  stochastic wave equation with polynomial nonlinearity in $\mathbb{R}^d$, $d=1,2,3 ,$ and analyzes strong convergence in  $L^p{(\Omega)}$.  Finite difference spatial discretizations for one-dimensional  stochastic wave equations were employed in \cite{sard2006, walsh2006, cohen2015}.
Secondly we comment on time stepping schemes for  stochastic wave equations. Stochastic trigonometric methods have been used for the temporal approximation of various types of  stochastic wave equations, see e.g. \cite{anton2015, cohen2013,cohen2015,wang20142} and improved convergence rates  were derived in \cite{wang2014} by using linear functionals of the noise as in \cite{jentzen2009}. Strong convergence rates for the St\"{o}rmer-Verlet/leapfrog time-stepping scheme applied to the one-dimensional stochastic wave equation  and one-step $I$-stable time-stepping methods can be found in \cite{walsh2006} and \cite{Kovacs2013}, respectively. 
Finally, the recent preprint \cite{cui2019} analyzes the strong convergence of a splitting average vector field time-stepping method. 

Concerning the dG finite element approximation of parabolic stochastic partial differential equations (SPDEs) we refer the reader to \cite{li2020, chen2018}. In \cite{chen2018} the exact solution was represented in terms of Green's functions and strong convergence estimates were derived for dG approximation to the linear Cahn-Hilliard equation. In the recent publication \cite{li2020} a local dG method is analyzed for nonlinear parabolic SPDEs with multiplicative noise and strong spatial convergence rates are derived.

Our main aim here is to prove strong convergence to the mild solution of the stochastic position Verlet method (SVM) with a dG discretization in space applied to \eqref{wave},  by introducing a discrete norm, under a CFL condition, which is preserved by the time integrator. We note that the same analytical tool could be used to analyze the stability of  the St\"{o}rmer-Verlet/leapfrog method.
To the best of our knowledge, the semigroup approach to the dG formulation that we adopt here  has not been considered elsewhere. The martingale approach in  \cite{walsh2006} for the St\"{o}rmer-Verlet/leapfrog scheme applies only to the one-dimensional case, while our analysis  enables us to obtain optimal error bounds for both the displacement and the velocity in multiple dimensions.

The paper is organized as follows. We introduce some notation, introduce
our assumptions on the noise and the nonlinearity, and rewrite \eqref{wave} as a first order system
in Section \ref{sec:not}. In Section \ref{sec:spacedis} we discuss the existence and  the uniqueness of the dG finite element approximation of the stochastic equation (\ref{wave}) and we extend the results of \cite{anton2015} on the strong convergence estimates for the spatial discretization of our problem. Stability and strong convergence analysis of SVM is considered in Section \ref{sec:fullydiscrete}.  In Section \ref{sec:energy} we state energy results of the full discretization of our problem and in Section \ref{sec:numexp} we present numerical experiments in order  to demonstrate the theory and the efficiency of our discretization.

\section{Preliminaries and notation}
\label{sec:not}
Let $U$ and $H$ be separable Hilbert spaces with norms $\| \cdot \|_U$ and $\|\cdot\|_H$, respectively. We denote the space of linear operators from $U$ to $H$ by ${\cal L}(U, H)$, and we let ${\cal L}_2(U,H)$ be the set of Hilbert-Schmidt operators with norm 
\begin{equation*} 
\|R\|_{{\cal L}_2(U,H)}:=\left( \sum\limits_{k=1}^{\infty}\|Re_k\|_H^2\right)^{1/2},
\end{equation*}
where $\{e_k \}_{k=1}^{\infty}$ is an arbitrary orthonormal basis of $U$. If $H=U$, then we write ${\cal L}(U)={\cal L}_2(U, U)$ and HS=${\cal L}_2(U, U)$.
 Let $L_2(\Omega, H)$ be the space of $H$-valued square integrable random variables with norm
\begin{equation*} 
\| v\|_{L_2(\Omega, H)}:=\mathbb{E}\left[ \|v\|^2_H\right]^{1/2}.
\end{equation*}
Let $Q\in{\cal L}(U)$ be a self-adjoint, positive semidefinite operator. The driving stochastic process $W(t)$ in (\ref{wave}) is a $U$-valued $Q$-Wiener process with respect to the filtration $\{{\cal F}_t\}_{t\geq0}$ and has the orthogonal expansion \cite[Section 10.2]{gabriel2014}
\begin{equation} \label{eq:wiener}
W(t)=\sum\limits_{j=1}^{\infty}q_j^{1/2}\beta_j(t)\psi_j,
\end{equation}
where $\{\psi_j\}_{j=1}^{\infty}$ are  orthonormal eigenfunctions of $Q$ with corresponding eigenvalues $q_j \geq 0$ and $\{\beta_j(t) \}_{j=1}^{\infty}$ are real-valued mutually independent standard Brownian motions. Suppose that $\{\Phi(s)\}_{0\leq s \leq t}\in {\cal L}(U,H)$ and that  
\begin{equation*} 
\int_{0}^{t}\|\Phi(s)Q^{1/2}\|_{\rm HS}{\rm d}s< \infty,
\end{equation*}
then the stochastic integral $\int_{0}^{t}\Phi(s){\rm d}W(s)$ is well defined in $L_2(\Omega,H)$ and we have It\^{o}'s isometry, see  \cite[Section 10.2]{gabriel2014}, 
\begin{equation} \label{eq:itosisometry}
\left\|\int_{0}^{t}\Phi(s){\rm d}W(s)\right\|_{L_2(\Omega,H)} = \int_{0}^{t}\|\Phi(s)Q^{1/2}\|_{{\cal L}_2(U,H)}{\rm d}s.
\end{equation} 
Let us introduce the spaces and norms that we use to describe the spatial regularity of functions.  Let $\Lambda =-\nabla \cdot ({\bf D}\nabla u)$ denote the  linear  operator $\Lambda  \colon D(\Lambda) \rightarrow L_2({\cal D}) $ with $D(\Lambda)=H^2({\cal D})\cap H^1_0({\cal D})$ and let $L_2({\cal D})$ have the usual inner product $(\cdot, \cdot)$ and norm $\| \cdot \|$.  Further, we define the following spaces  
\begin{equation*} 
\dot{H}^\alpha = D(\Lambda^{\alpha/2}), \quad \| v \|_\alpha = \| \Lambda^{\alpha/2}v\|=\left( \sum\limits_{j=0}^{\infty}\lambda_j^\alpha(v,\phi_j)^2\right)^{1/2}, \quad \alpha\in\mathbb{R}, \, v\in \dot{H}^\alpha,
\end{equation*}
where $\{(\lambda_j, \phi_j )\}_{j=1}^{\infty}$ are the eigenpairs of $\Lambda$ with orthonormal eigenvectors.  We also introduce a product Hilbert space with accompanying norm
\begin{equation*} 
{\cal H}^\alpha = \dot{H}^\alpha\times \dot{H}^{\alpha-1}, \quad \vertiii{v}_\alpha^2 =\| v_1\|_\alpha^2 +\| v_2\|_{\alpha-1}^2, \quad \alpha\in\mathbb{R},\, v\in {\cal H}^\alpha.
\end{equation*}

 To study (\ref{wave}) as an abstract stochastic differential equation on the Hilbert space ${\cal H}^1$, we use the notation $u_1 := u$ and  $u_2:=\dot{u}_1=\dot{u}$, and rewrite (\ref{wave}) as follows
\begin{equation} \label{wavesde}
{\rm d}U(t) = AU(t){\rm d}t +F(U(t))dt+ B{\rm d}W, \quad U(0)=U_0, \quad t\in(0,T),
\end{equation} 
where 
\begin{equation}\label{eq:defns_1storder}
 U=\left[ \begin{matrix}  u_{1} \\ u_{2}
\end{matrix}\right], \quad A= \begin{bmatrix} 0 &I \\- \Lambda & 0 \end{bmatrix}, \quad F(U(t))= \left[\begin{matrix} 0\\ f(u_1(t))\end{matrix}\right],\quad B = \left[\begin{matrix} 0\\ I\end{matrix}\right], \quad U_0 = \left[\begin{matrix} u_0\\ v_0\end{matrix}\right].
\end{equation}
The operator $A$ with 
$
D(A) = \left( H^2({\cal D})\cap H^1_0({\cal D})\right) \times H^1_0({\cal D})
$ is the generator of a strongly continuous semigroup ($C_0$-semigroup) $E(t)=e^{tA}$ on $ H^1_0 \times L_2({\cal D})$ and 
\begin{equation} \label{semigroup}
E(t)=e^{tA}=\begin{bmatrix}
C(t) & \Lambda^{-1/2}S(t)\\
-\Lambda^{1/2}S(t) & C(t)
\end{bmatrix},
\end{equation}
where $C(t)=\cos\left(t\Lambda^{1/2}\right)$ and $S(t)=\sin\left(t\Lambda^{1/2}\right)$. For more detail we refer the reader to \cite[Appendix A]{daprato2014} or \cite[Chapter 10.3]{brezis2010}.

We require that the  $Q$-Wiener process $W(t)$ in (\ref{eq:wiener}) satisfies the following assumption. 
\begin{assum} 
\label{assum:noise}
 The $Q$-Wiener process $W(t)$  (\ref{eq:wiener})  takes values in $\dot H^{\beta -1}$ for a fixed $\beta\geq 0$, i.e., $\mathbb{E} \left[\| W(t)\|^2_{\beta-1} \right]< \infty$.
\end{assum}
 Assumption \ref{assum:noise} is equivalent to requiring that  $\| \Lambda^{(\beta - 1)/2}Q^{1/2}\|^2_{\rm HS}<\infty $ for a fixed $\beta\geq 0$ since 
\begin{equation} \label{eq:noisecond}
\mathbb{E} \left[\| W(t)\|^2_{\beta-1} \right] = \sum\limits_{j=1}^{\infty}\lambda_j^{\beta-1}q_j \mathbb{E}[\beta_j(t)^2]=t \sum\limits_{j=1}^{\infty}\lambda_j^{\beta-1}q_j=t\| \Lambda^{(\beta - 1)/2}Q^{1/2}\|^2_{\rm HS}.
\end{equation}
Hence, $\| \Lambda^{(\beta - 1)/2}Q^{1/2}\|^2_{\rm HS}<\infty $ if and only if  $\mathbb{E} \left[\| W(t)\|^2_{\beta-1} \right]<\infty$ for any finite $t$.

We also assume that the function $f:L_2({\cal D})\rightarrow L_2({\cal D})$ satisfies 
\begin{equation} \label{eq:assumf}
\begin{aligned} 
&\|f(u)-f(v)\|\leq C\|u -v\|, & &\text{for }u,v\in L_2({\cal D}),\\
&\|f(u)\|_\gamma \leq C(1+\|u\|_\gamma), & &\text{for }  u\in \dot{H}^\gamma \text{ and }\gamma \geq 0.
\end{aligned}
\end{equation}

The existence and uniqueness of the mild solution of the  stochastic wave equation (\ref{wave}) is discussed in \cite[Theorem 7.4]{daprato2014} and the spatial Sobolev regularity of the solution  is discussed in \cite[Proposition 3.1]{wang20142}.
\begin{theorem}\label{thm:cont_exist}
Assume that the function f satisfies (\ref{eq:assumf}) and that $\| U_0\|_{L_2(\Omega,{\cal H}^\beta)}<\infty$, for some $\beta\geq 0$. Then under Assumption \ref{assum:noise} the stochastic wave equation (\ref{wavesde}) has a unique mild solution,  given  by 
\begin{equation} \label{mildsolution}
U(t) = E(t)U_0 + \int_0^t E(t-s)F(U(s)){\rm d}s + \int_0^t E(t-s)B{\rm d}W(s).
\end{equation}
Additionally,  there exists a constant $C$ depending on $T$ and  $\|\Lambda^{(\beta-1)/2}Q^{1/2}\|_{\rm HS}$ such that 
\begin{equation} 
\| U(t)\|_{L_2(\Omega, {\cal H}^\beta)}\leq C(\| U_0\|_{L_2(\Omega, {\cal H}^\beta)} +1), \quad 0\leq t\leq T.
\end{equation}
\end{theorem}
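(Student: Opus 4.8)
The plan is to realise \eqref{mildsolution} as the unique fixed point of the affine map
\[
(\mathcal{T}U)(t) = E(t)U_0 + \int_0^t E(t-s)F(U(s))\,\mathrm{d}s + \int_0^t E(t-s)B\,\mathrm{d}W(s)
\]
on the Banach space $Z$ of $\{\mathcal{F}_t\}$-predictable $\mathcal{H}^{\alpha}$-valued processes with $\|U\|_Z := \sup_{0\le t\le T}\|U(t)\|_{L_2(\Omega,\mathcal{H}^\alpha)} < \infty$, where $\alpha := \min\{\beta,1\}$; since $\mathcal{H}^\beta \hookrightarrow \mathcal{H}^\alpha$, the hypothesis $U_0\in L_2(\Omega,\mathcal{H}^\beta)$ puts $U_0$ in this space. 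The argument rests on three structural facts. First, by the explicit form \eqref{semigroup} and the functional calculus for $\Lambda$ (using $C(t)^2+S(t)^2=I$ and that $C(t),S(t)$ commute with powers of $\Lambda$), the $C_0$-semigroup $E(t)$ is an isometry on every $\mathcal{H}^\mu$, i.e. $\vertiii{E(t)v}_\mu=\vertiii{v}_\mu$. Second, using \eqref{eq:assumf} together with the elementary embeddings $\|w\|_{\mu_1}\le\lambda_1^{(\mu_1-\mu_2)/2}\|w\|_{\mu_2}$ for $\mu_1\le\mu_2$, the map $F$ is globally Lipschitz on $\mathcal{H}^\mu$ for every $\mu\le1$ --- here only the $L_2$-Lipschitz bound of $f$ is used, because the second component of $F(U)-F(V)$ is measured in the $\|\cdot\|_{\mu-1}$-norm with $\mu-1\le0$ --- and $F$ has linear growth, $\vertiii{F(U)}_\mu\le C(1+\vertiii{U}_\mu)$, for every $\mu\ge0$ (for $\mu\ge1$ this uses the $\dot H^{\mu-1}$-growth bound in \eqref{eq:assumf}). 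Third, writing $E(t-s)B=[\,\Lambda^{-1/2}S(t-s),\ C(t-s)\,]^{T}$ one gets $\vertiii{E(t-s)B\psi}_\beta\le\sqrt2\,\|\Lambda^{(\beta-1)/2}\psi\|$, hence $\|E(t-s)BQ^{1/2}\|_{\mathcal{L}_2(U,\mathcal{H}^\beta)}\le\sqrt2\,\|\Lambda^{(\beta-1)/2}Q^{1/2}\|_{\mathrm{HS}}$, which is finite by \eqref{eq:noisecond}; together with the It\^o isometry \eqref{eq:itosisometry} this shows the stochastic convolution is well defined in $L_2(\Omega,\mathcal{H}^\beta)$ and bounded there, uniformly for $t\in[0,T]$, by $C_T\,\|\Lambda^{(\beta-1)/2}Q^{1/2}\|_{\mathrm{HS}}$.

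Granted these facts, the contraction estimate is immediate: for $U,V\in Z$ only the nonlinear term of $\mathcal{T}$ differs, so the isometry of $E$ and the Lipschitz bound on $\mathcal{H}^\alpha$ (recall $\alpha\le1$) give $\|(\mathcal{T}U)(t)-(\mathcal{T}V)(t)\|_{L_2(\Omega,\mathcal{H}^\alpha)}\le C\int_0^t\|U(s)-V(s)\|_{L_2(\Omega,\mathcal{H}^\alpha)}\,\mathrm{d}s$. Equipping $Z$ with the equivalent weighted norm $\sup_{t}e^{-\lambda t}\|U(t)\|_{L_2(\Omega,\mathcal{H}^\alpha)}$ for $\lambda$ large (or iterating the bound on short subintervals of $[0,T]$) makes $\mathcal{T}$ a strict contraction; predictability and the finiteness of $\|\mathcal{T}U\|_Z$ are preserved by the three facts above. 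Banach's fixed point theorem then yields a unique mild solution $U\in Z$, which proves existence and uniqueness of \eqref{mildsolution} in the $\mathcal{H}^\beta$-valued setting when $\beta\le1$, and in the $\mathcal{H}^1$-valued setting when $\beta>1$.

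It remains to establish the a priori bound and, when $\beta>1$, to upgrade the regularity of the solution from $\mathcal{H}^1$ to $\mathcal{H}^\beta$. Applying $\vertiii{\cdot}_\beta$ to \eqref{mildsolution}, using the isometry of $E$, the linear-growth bound on $F$ in $\mathcal{H}^\beta$, and the stochastic-convolution bound, one obtains
\[
\|U(t)\|_{L_2(\Omega,\mathcal{H}^\beta)}\le\|U_0\|_{L_2(\Omega,\mathcal{H}^\beta)}+C\int_0^t\bigl(1+\|U(s)\|_{L_2(\Omega,\mathcal{H}^\beta)}\bigr)\mathrm{d}s+C_T\,\|\Lambda^{(\beta-1)/2}Q^{1/2}\|_{\mathrm{HS}},
\]
and Gr\"onwall's inequality gives the claimed estimate with a constant depending only on $T$ and $\|\Lambda^{(\beta-1)/2}Q^{1/2}\|_{\mathrm{HS}}$. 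When $\beta>1$, the left-hand side must first be known to be finite; this is handled by localisation --- run the estimate with the stopping times $\tau_R=\inf\{t:\vertiii{U(t)}_\beta>R\}$ and let $R\to\infty$ --- or, equivalently, by observing that the Picard iterates $U^{(n+1)}=\mathcal{T}U^{(n)}$, $U^{(0)}\equiv U_0$, stay bounded in $L_2(\Omega,\mathcal{H}^\beta)$ uniformly in $n$ (again by Gr\"onwall applied to the iteration) and passing to the limit with Fatou's lemma, since these iterates already converge in $Z$. I expect this last point --- reconciling the fact that the contraction naturally lives in $\mathcal{H}^{\min\{\beta,1\}}$, where $f$ is Lipschitz, with the $\mathcal{H}^\beta$-regularity claimed for $\beta>1$ --- to be the only genuinely delicate step; everything else is a routine combination of the isometry of $E(t)$, the Lipschitz and growth properties \eqref{eq:assumf}, the It\^o isometry, and Gr\"onwall's lemma. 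Alternatively, one may simply invoke \cite[Theorem 7.4]{daprato2014} for existence and uniqueness and \cite[Proposition 3.1]{wang20142} for the regularity bound.
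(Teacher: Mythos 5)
Your argument is correct, but it is worth noting that the paper does not actually prove this theorem: its ``proof'' consists of the two citations you mention only as a fallback at the very end, namely \cite[Theorem 7.4]{daprato2014} for existence and uniqueness of the mild solution and \cite[Proposition 3.1]{wang20142} for the regularity bound. What you have written out is essentially the standard argument underlying those references, executed directly in the concrete spectral setting of the paper: the verification that $E(t)$ is an isometry on every ${\cal H}^\mu$ (which follows from $C(t)^2+S(t)^2=I$ and the cancellation of the cross terms in the eigenbasis, exactly as you indicate), the observation that $F$ is Lipschitz on ${\cal H}^\mu$ only for $\mu\le 1$ because the second component is measured in $\|\cdot\|_{\mu-1}$, the It\^o-isometry bound $\|E(t-s)BQ^{1/2}\|_{{\cal L}_2(U,{\cal H}^\beta)}\le \|\Lambda^{(\beta-1)/2}Q^{1/2}\|_{\rm HS}$ (in fact this holds with constant $1$, so your $\sqrt2$ is harmless slack), and the Banach fixed point plus Gr\"onwall closing step. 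You have also correctly isolated the one genuinely delicate point, namely that the contraction naturally lives in ${\cal H}^{\min\{\beta,1\}}$ while the stated regularity is in ${\cal H}^\beta$ for $\beta>1$, and your resolution via uniformly bounded Picard iterates (using the $\dot H^{\gamma}$-growth bound in \eqref{eq:assumf} with $\gamma=\beta-1$) together with Fatou's lemma is sound; this is precisely the content delegated by the paper to \cite[Proposition 3.1]{wang20142}. In short, your proposal buys a self-contained proof where the paper offers only pointers, at the cost of length; the two are entirely compatible.
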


\section{Spatial semi-discretization}
\label{sec:spacedis}
In this section, we discretize (\ref{wave}) by using the interior penalty dG  finite element method \cite{Arnold1982} in space and provide existence and uniqueness of the dG semi-discrete formulation of (\ref{wavesde}) in a semigroup framework. For an introduction to dG finite element methods we refer to \cite{Arnold2002, ern2012, hesthaven2007}. For a review on dG finite element methods in the context of wave equations see \cite{cohen2016finite}. Furthermore, we derive strong error estimates for the spatial discretization of problem (\ref{wavesde}). 

\subsection{Discontinuous Galerkin method and semigroup approach}
\label{sec:dgmethod}
In order to discretize problem (\ref{wave}) in space, we consider a family of regular and  quasi-uniform meshes  
${\cal T}_h$ parametrized by the mesh-width $h > 0$. Each ${\cal T}_h$ partitions the domain ${\cal D}$ into elements $\mathfrak{T}$, where we denote by $h_{\mathfrak{T}}$ the diameter of the element $\mathfrak T$ and $h={\rm max}_{\mathfrak T} h_{\mathfrak T}$ the mesh-width. We assume that the elements $\mathfrak T$ are triangles or parallelograms in two space dimensions, and tetrahedra or parallelepipeds in three dimensions, respectively. We denote by $\mathfrak{ F}_h = \cup_{\mathfrak T \in \mathcal{ T}_h} \partial \mathfrak T$ the set of all faces. This is split into  boundary $\mathfrak{ F}_h^B=\mathfrak{ F}_h\cap \partial {\cal D}$ and interior faces   $\mathfrak{ F}_h^I=\mathfrak{ F}_h\setminus\mathfrak{ F}_h^B$. Let $\mathfrak T^+$, $\mathfrak T^-$ be two elements sharing an interior face $\mathfrak{ F}\in \mathfrak{ F}_h^I$ with respective outward normal unit vectors $\bf{n}^+$ and $\bf{n}^-$.  Denoting by $u^{\pm}$ the trace of $u:{\cal D}\rightarrow \mathbb{R}$ taken from within $\mathfrak T^{\pm}$, we define the average of $u$ over $\mathfrak{ F} \in \mathfrak{ F}_h^I$ by 
$$ 
 \bm{ \{ } { u} \bm{ \} } = \frac{1}{2}\left({u}^+ + {u}^- \right).
$$
Similarly, the jump of ${ u}$ over $\mathfrak{ F} \in \mathfrak{ F}_h^I$ is given by 
$$ 
\llbracket {u} \rrbracket  ={ u}^+ {\bf n}^{+} + {u}^- \bf{n}^-.
$$
For a boundary face $\mathfrak{ F}\in \mathfrak{ F}_h^B$, we set $\bm{ \{ } { u} \bm{ \} }={u}$ and $\llbracket {u} \rrbracket = {u}\bf{n}$, where $\bf{n}$ denotes the unit outward normal vector on $\partial {\cal D}$. 
\par 
We now define the discontinuous polynomial space 
\begin{equation} 
V_h=\left\{ u\in L_2\left( {\cal D}\right):u|_{\mathfrak T}\in {\cal P}^p(\mathfrak T),\, \mathfrak T\in {\cal  T}_h \right\},
\end{equation} 
where ${\cal P}^p(\mathfrak T)$  denotes the polynomials of (total) degree less or equal to $p \geq 1$.

\par
The dG semi-discrete formulation of (\ref{wave}) is
given by: find $u_h(t)\in V_h$ such that 
\begin{align} \label{discdgwave}
&\left({\rm d} \dot{u}_h, v\right) +B_h\left( u_h,v\right){\rm d}t=\left( P_hf(u_h),v\right)dt+\left( P_h{\rm d}W,v\right) \quad \forall v\in V_h,\;t\in(0,T),\\
 &\,u_h(\cdot,0) =  u_{h,0}, \,\dot{u}_h(\cdot,0)=  v_{h,0},
\end{align}
where $(\cdot,\cdot)$ is the $L_2(\cal D)$ inner product, $P_h: L_2\left( {\cal D}\right)\rightarrow V_h$  the $L_2$-projection onto $V_h$,  $u_{h,0}, v_{h,0} \in V_h$ projections of initial data to be determined later, and $B_h$  the  symmetric interior penalty discrete bilinear form
\begin{equation}  \label{bilinear}
\begin{aligned}
B_h\left( u,v \right) =& \sum_{\mathfrak T\in \mathcal{T}_h}\int_{\mathfrak T}{\rm \bf D} \nabla u \cdot \nabla v\,{\rm d}x - \sum_{\mathfrak{F}\in\mathfrak{F}_h}\int_{\mathfrak{F}} \bm{ \{ } {\rm \bf D}\nabla u \bm{ \} }  \cdot\llbracket v\rrbracket \,{\rm d}s  \\
&- \sum_{\mathfrak{F}\in\mathfrak{F}_h}\int_{\mathfrak{F}}\llbracket u\rrbracket\cdot  \bm{ \{ } {\rm \bf D}\nabla v \bm{ \} }  \,{\rm d}s + \sigma_0  \sum_{\mathfrak{F}\in\mathfrak{F}_h}\int_{\mathfrak{F}}h_{\mathfrak{F}}^{-1}{\rm \bf D} \llbracket u \rrbracket\cdot  \llbracket v\rrbracket\, {\rm d}s,
\end{aligned}
\end{equation}
where $h_{\mathfrak{F}}$ is the diameter of the face $\mathfrak{F}$.  The interior penalty stabilization parameter $\sigma_0 > 0$ has to be chosen sufficiently large but independent of the mesh size. The last three terms in (\ref{bilinear}) correspond to jump and flux terms at the faces and they vanish when $u, v \in H^2({\cal D})\cap H_0^1({\cal D})$. The third term in (\ref{bilinear}) makes the bilinear form symmetric   and the last term ensures coercivity of the  bilinear form, see Lemma \ref{coer_and_cont}.

The bilinear form $B_h(\cdot,\cdot)$ defines a discrete linear operator $\Lambda_h : V_h \rightarrow V_h$
$$
(\Lambda_h v_h, w) = B_h(v_h,w), \qquad \forall w \in V_h.
$$
This in turn gives a discrete analogue of the norm $\| \cdot \|_\alpha$ 
$$
\|v_h\|_{h,\alpha} := \|\Lambda_h^{\alpha/2}v_h\|=\left( \sum\limits_{j=1}^{N_h}\lambda_{h,j}^\alpha(v_h,\phi_{h,j})^2\right)^{1/2}, \qquad v_h \in \dot{H}^\alpha_h,\, \alpha \in \mathbb{R},
$$
where $\{\phi_{h,j})\}_{j=1}^{N_h}$, $N_h=\dim V_h$, are the orthonormal eigenvectors of $\Lambda_h$ with corresponding eigenvalues $\lambda_{h,j}\geq 0$.
Note that since $\Lambda_h$ is a symmetric, positive definite operator, the fractional power is well-defined.
We also introduce discrete variants of $\vertiii{\cdot}_\alpha$ and ${\cal H}^\alpha$
\begin{equation*}
{\cal H}_h^\alpha=V_h\times V_h, \quad\vertiii{[u_1,u_2]^T}_{h,\alpha}^2 = \| u_1\|_{h,\alpha}^2 + \| u_2\|_{h, \alpha-1}^2,\quad [u_1,u_2]^T\in V_h\times V_h.
\end{equation*}
We now introduce the broken norm as in \cite{Grote2009}
\begin{equation} \label{eq:starnorm}
\|u\|_{*}: = \left( \sum_{\mathfrak T\in{\cal T}_h}\|\nabla u \|^2_{L_2(\mathfrak T)} + \sum_{\mathfrak T\in{\cal T}_h}h_{\mathfrak T}^2\| \Delta u \|^2_{L_2(\mathfrak T)}+\sum_{\mathfrak{F}\in{\mathfrak{ F}}_h}h_{\mathfrak{F}}^{-1}\| \llbracket u \rrbracket  \|^2_{L_2(\mathfrak{F})}\right)^{1/2}.
\end{equation}

The bilinear form $B_h$  in (\ref{bilinear}) is coercive and continuous in  the  norm (\ref{eq:starnorm}), see \cite{Arnold1982, Arnold2002}.
\begin{lemma} \label{coer_and_cont}
For large enough $\sigma_0>0$ there exists a constant $C_A>0$, dependent on $\sigma_0, d_{\max},$ and $d_{\min}$, and  independent of the mesh size, such that 
\begin{equation} \label{continuity}
\left| B_h(u,v)\right|\leq C_A\| u\|_{*} \| v\|_{*}, \quad \forall u, v \in {\dot H}^2\left( {\cal D}\right) + V_h,
\end{equation}
and 
\begin{equation} \label{coercivity}
B_h(u,u) \geq \frac{1}{2}\|u\|^2_{*}, \quad \forall u\in V_h.
\end{equation}
Consequently, we have the following norm equivalence
\begin{equation} \label{eq:energynormeq}
\frac{1}{2}\|u\|^2_{*} \leq \|u\|_{h,1}^2 \leq C_A \|u\|_*^2, \qquad \forall u \in V_h. 
\end{equation}
\end{lemma}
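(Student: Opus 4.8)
\emph{Proof plan.} The plan is to estimate the four contributions to $B_h$ in \eqref{bilinear} one at a time, using only the shape-regularity and quasi-uniformity of $\mathcal T_h$, so as to obtain first \eqref{continuity} and \eqref{coercivity} and then to read off \eqref{eq:energynormeq} from the identity $\|u\|_{h,1}^2=B_h(u,u)$. The two mesh tools I would isolate at the outset are: (i) the discrete trace (inverse) inequality $\|w_h\|^2_{L_2(\mathfrak F)}\le C_{\mathrm{tr}}\,h_{\mathfrak F}^{-1}\|w_h\|^2_{L_2(\mathfrak T)}$ for every polynomial $w_h$ on $\mathfrak T$ and every face $\mathfrak F\subset\partial\mathfrak T$, together with the inverse inequality $h_{\mathfrak T}\|\Delta w_h\|_{L_2(\mathfrak T)}\le C_I\|\nabla w_h\|_{L_2(\mathfrak T)}$ on $V_h$; and (ii) for $w\in H^2(\mathfrak T)$ the scaled trace theorem $h_{\mathfrak F}\|\nabla w\cdot\mathbf n\|^2_{L_2(\mathfrak F)}\le C\bigl(\|\nabla w\|^2_{L_2(\mathfrak T)}+h_{\mathfrak T}^2\|\Delta w\|^2_{L_2(\mathfrak T)}\bigr)$. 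Since every element has a uniformly bounded number of faces, summing (i)--(ii) over $\mathfrak T$ and $\mathfrak F$ yields the estimate $\sum_{\mathfrak F}h_{\mathfrak F}\|\{\mathbf D\nabla u\}\|^2_{L_2(\mathfrak F)}\le C\,d_{\max}^2\|u\|^2_{*}$ for all $u\in\dot H^2(\mathcal D)+V_h$; this is what makes the flux terms controllable by $\|\cdot\|_*$, and it is precisely the step where the $\sum_{\mathfrak T}h_{\mathfrak T}^2\|\Delta u\|^2_{L_2(\mathfrak T)}$ term of \eqref{eq:starnorm} is needed.

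For \eqref{continuity} I would bound the four terms of \eqref{bilinear} by Cauchy--Schwarz: the volume term by $d_{\max}\bigl(\sum_{\mathfrak T}\|\nabla u\|^2_{L_2(\mathfrak T)}\bigr)^{1/2}\bigl(\sum_{\mathfrak T}\|\nabla v\|^2_{L_2(\mathfrak T)}\bigr)^{1/2}\le d_{\max}\|u\|_*\|v\|_*$, the penalty term similarly by $\sigma_0 d_{\max}\|u\|_*\|v\|_*$, and each flux term by writing $\int_{\mathfrak F}\{\mathbf D\nabla u\}\cdot\llbracket v\rrbracket$ as $\bigl(h_{\mathfrak F}^{1/2}\|\{\mathbf D\nabla u\}\|_{L_2(\mathfrak F)}\bigr)\bigl(h_{\mathfrak F}^{-1/2}\|\llbracket v\rrbracket\|_{L_2(\mathfrak F)}\bigr)$ and applying the summed trace estimate above, which gives $\le C\|u\|_*\|v\|_*$. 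Adding the four contributions produces \eqref{continuity} with $C_A$ depending only on $\sigma_0$, $d_{\max}$, $C_{\mathrm{tr}}$, $C_I$ and the shape-regularity constant, and not on $h$.

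For \eqref{coercivity} I would take $v=u\in V_h$, so that the two flux terms coincide and $B_h(u,u)=\sum_{\mathfrak T}\int_{\mathfrak T}\mathbf D\nabla u\cdot\nabla u-2\sum_{\mathfrak F}\int_{\mathfrak F}\{\mathbf D\nabla u\}\cdot\llbracket u\rrbracket+\sigma_0\sum_{\mathfrak F}\int_{\mathfrak F}h_{\mathfrak F}^{-1}\mathbf D\llbracket u\rrbracket\cdot\llbracket u\rrbracket$. By \eqref{eq:wavespeed} the first term is $\ge d_{\min}\sum_{\mathfrak T}\|\nabla u\|^2_{L_2(\mathfrak T)}$ and the last is $\ge\sigma_0 d_{\min}\sum_{\mathfrak F}h_{\mathfrak F}^{-1}\|\llbracket u\rrbracket\|^2_{L_2(\mathfrak F)}$, while Young's inequality with a parameter $\varepsilon>0$ and the discrete trace bound give $2\bigl|\sum_{\mathfrak F}\int_{\mathfrak F}\{\mathbf D\nabla u\}\cdot\llbracket u\rrbracket\bigr|\le\varepsilon C_{\mathrm{tr}}d_{\max}^2\sum_{\mathfrak T}\|\nabla u\|^2_{L_2(\mathfrak T)}+\varepsilon^{-1}\sum_{\mathfrak F}h_{\mathfrak F}^{-1}\|\llbracket u\rrbracket\|^2_{L_2(\mathfrak F)}$. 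Fixing $\varepsilon$ so that $\varepsilon C_{\mathrm{tr}}d_{\max}^2\le d_{\min}/2$ and then taking $\sigma_0$ beyond the resulting threshold, one gets $B_h(u,u)\ge c\bigl(\sum_{\mathfrak T}\|\nabla u\|^2_{L_2(\mathfrak T)}+\sum_{\mathfrak F}h_{\mathfrak F}^{-1}\|\llbracket u\rrbracket\|^2_{L_2(\mathfrak F)}\bigr)$ with $c>0$ independent of $h$; since $h_{\mathfrak T}^2\|\Delta u\|^2_{L_2(\mathfrak T)}\le C_I^2\|\nabla u\|^2_{L_2(\mathfrak T)}$ on $V_h$, the right-hand side is equivalent to $\|u\|^2_*$, which gives \eqref{coercivity}. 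Finally \eqref{eq:energynormeq} is immediate: for $u\in V_h$ one has $\|u\|^2_{h,1}=\|\Lambda_h^{1/2}u\|^2=(\Lambda_h u,u)=B_h(u,u)$ (using that $\Lambda_h$ is self-adjoint, which holds because $B_h$ is symmetric), so the lower bound of \eqref{eq:energynormeq} is \eqref{coercivity} and the upper bound is \eqref{continuity} with $v=u$.

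The step I expect to be most delicate is pinning down the admissible threshold for $\sigma_0$: it is tied to the constant $C_{\mathrm{tr}}$ in the discrete trace inequality, which depends on the polynomial degree $p$ and the shape-regularity constant but not on $h$ — this is exactly what forces $\sigma_0$ to be ``sufficiently large but mesh-independent'' and what keeps $C_A$ and the coercivity constant mesh-independent (the value $1/2$ in \eqref{coercivity} then being secured by taking $\sigma_0$ correspondingly large). A secondary technical point is the continuity estimate on the inhomogeneous space $\dot H^2(\mathcal D)+V_h$, where one must invoke the scaled trace theorem (ii) rather than a pure inverse inequality for the $H^2$ component, so that the normal-flux traces are bounded by $\|u\|_*$ and not merely by a mesh-dependent multiple of the broken $H^1$ seminorm.
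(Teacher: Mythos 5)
Your argument is correct and is essentially the standard Arnold-type proof that the paper itself invokes by citation (the paper offers no proof of this lemma beyond referring to \cite{Arnold1982, Arnold2002}): discrete trace/inverse inequalities plus the scaled trace bound for the $\dot H^2$ component give continuity, Young's inequality with a sufficiently large $\sigma_0$ gives coercivity, and the identity $\|u\|_{h,1}^2=(\Lambda_h u,u)=B_h(u,u)$ yields the norm equivalence. One small inaccuracy: the specific constant $\tfrac12$ in \eqref{coercivity} cannot in general be ``secured by taking $\sigma_0$ correspondingly large,'' since enlarging $\sigma_0$ only strengthens the jump contribution while the coefficient of $\sum_{\mathfrak T}\|\nabla u\|^2_{L_2(\mathfrak T)}$ remains capped by $d_{\min}$; your argument honestly delivers $B_h(u,u)\ge c\,\|u\|_*^2$ with $c$ depending on $d_{\min}$, $C_{\mathrm{tr}}$ and $C_I$, which is all that is used downstream (the paper's fixed $\tfrac12$ implicitly assumes a normalization of $\mathbf D$).
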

We also need the following spectral estimate (Lemma 3.3 in \cite{Grote2009}).
\begin{lemma} \label{lem:uniformes}
For $ u\in V_h+\dot{H}^2({\cal D})$, it holds 
\begin{equation} 
B_h(u,u) \leq C_sh^{-2}\|u\|^2, 
\end{equation}
where $C_s >0$,  is a constant independent of the mesh size,  and depends on $\sigma_0, d_{\max},$ and  the polynomial degree p.
\end{lemma}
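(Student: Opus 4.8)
The plan is to bound each of the four terms in the bilinear form \eqref{bilinear} separately, using standard inverse (inverse-type) inequalities available on regular, quasi-uniform meshes for piecewise polynomials of degree $\leq p$, together with a decomposition $u = u_{\rm reg} + u_h$ with $u_{\rm reg}\in\dot H^2(\cal D)$ and $u_h\in V_h$. Since both the smooth part and the discrete part admit (possibly different) inverse and trace inequalities on each element $\mathfrak T$, it suffices to treat $u\in V_h$ and then add the contribution of $u_{\rm reg}$, whose $\|\cdot\|_*$-norm and $L_2$-norm are mesh-independent, so that for $h$ small the discrete term dominates; alternatively one notes that for $u\in\dot H^2$ the bilinear form reduces to $\int {\bf D}\nabla u\cdot\nabla u$, which is controlled by $h^{-2}\|u\|^2$ trivially once the Poincar\'e-type constant is absorbed. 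I would therefore focus on $u\in V_h$.

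The key steps, in order: (i) For the volume term $\sum_{\mathfrak T}\int_{\mathfrak T}{\bf D}\nabla u\cdot\nabla u\,{\rm d}x$, use \eqref{eq:wavespeed} to get $\le d_{\max}\sum_{\mathfrak T}\|\nabla u\|_{L_2(\mathfrak T)}^2$ and then the inverse inequality $\|\nabla u\|_{L_2(\mathfrak T)}\le C_{\rm inv} h_{\mathfrak T}^{-1}\|u\|_{L_2(\mathfrak T)}$ valid for polynomials of degree $\le p$, giving a bound $C d_{\max} h^{-2}\|u\|^2$ after summing and using quasi-uniformity ($h_{\mathfrak T}\simeq h$). (ii) For the penalty term $\sigma_0\sum_{\mathfrak F}\int_{\mathfrak F}h_{\mathfrak F}^{-1}{\bf D}\llbracket u\rrbracket\cdot\llbracket u\rrbracket\,{\rm d}s$, bound ${\bf D}$ by $d_{\max}$, use the discrete trace inequality $\|u^\pm\|_{L_2(\mathfrak F)}^2 \le C_{\rm tr} h_{\mathfrak T}^{-1}\|u\|_{L_2(\mathfrak T)}^2$ on each neighbouring element, and $h_{\mathfrak F}\simeq h_{\mathfrak T}\simeq h$, to obtain $\le C\sigma_0 d_{\max} h^{-2}\|u\|^2$. (iii) For the two consistency/symmetry terms $\mp\sum_{\mathfrak F}\int_{\mathfrak F}\bm\{{\bf D}\nabla u\bm\}\cdot\llbracket u\rrbracket\,{\rm d}s$, apply Cauchy--Schwarz on each face, then the trace inequality on $\nabla u$ (a polynomial of degree $\le p-1$) giving $\|\nabla u^\pm\|_{L_2(\mathfrak F)}\le C h_{\mathfrak T}^{-1/2}\|\nabla u\|_{L_2(\mathfrak T)}$ and a further inverse inequality $\|\nabla u\|_{L_2(\mathfrak T)}\le C h_{\mathfrak T}^{-1}\|u\|_{L_2(\mathfrak T)}$, together with the trace bound on $\llbracket u\rrbracket$ as in (ii); combining, each of these terms is also $\le C h^{-2}\|u\|^2$. (iv) Sum the four estimates, collect the dependence on $\sigma_0$, $d_{\max}$ and $p$ (the latter entering through the inverse and trace constants $C_{\rm inv}, C_{\rm tr}$), and relabel the total constant as $C_s$; the mesh-independence of $C_s$ follows from quasi-uniformity and shape-regularity.

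The main obstacle, such as it is, is purely bookkeeping: tracking how the constant accumulates through repeated applications of the inverse and trace inequalities while verifying that it never picks up negative powers of $h$ beyond $h^{-2}$ — in particular the cross terms in (iii) are the delicate ones, since a careless split could suggest an $h^{-3}$ factor; the resolution is to assign one factor $h^{-1/2}$ from each of the two face-trace inequalities and one factor $h^{-1}$ from a single gradient inverse inequality, which is legitimate because only $\nabla u$ (not a second derivative) appears, and $\|\Delta u\|_{L_2(\mathfrak T)}$ never enters $B_h$. A secondary point is to make the reduction $u\in V_h+\dot H^2(\cal D)\rightsquigarrow u\in V_h$ rigorous, which we handle by the triangle inequality in $\|\cdot\|_*$ together with the norm equivalence \eqref{eq:energynormeq} and the elementary bound $\|u_{\rm reg}\|_* \le C\|u_{\rm reg}\|_{H^2}$, so that the smooth part contributes a term bounded independently of $h$ and is absorbed. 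This is precisely Lemma 3.3 of \cite{Grote2009}, so I would in practice just cite that and sketch the inverse-inequality argument above.
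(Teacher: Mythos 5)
The paper offers no proof of its own here: it simply points to Lemma 3.3 of \cite{Grote2009}, and your element-by-element argument for $u\in V_h$ is exactly the standard proof behind that citation. Steps (i)--(iv) are correct, including the bookkeeping for the consistency terms, where one factor $h^{-1/2}$ comes from each discrete trace inequality and a single factor $h^{-1}$ from the gradient inverse inequality, so that no $h^{-3}$ appears.

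The one genuine problem is your treatment of the $\dot H^2({\cal D})$ component. There is no inverse inequality for arbitrary $H^2$ functions: for fixed $h$ one can take $w\in \dot H^2({\cal D})$ oscillating on a scale much finer than $h$, so that $\int_{\cal D}{\bf D}\nabla w\cdot\nabla w \gg h^{-2}\|w\|^2$; the Poincar\'e inequality runs in the opposite direction, and the remark that ``for $h$ small the discrete term dominates'' does not produce a constant $C_s$ uniform in $u$ and $h$. The splitting $u=u_{\rm reg}+u_h$ cannot be repaired by ``absorbing'' the smooth part either, since $\|u\|$ need not control $\|u_{\rm reg}\|+\|u_h\|$ separately. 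In fairness, this defect is inherited from the statement itself: as written for all of $V_h+\dot H^2({\cal D})$ the estimate cannot hold with a uniform constant, and the cited Lemma 3.3 of \cite{Grote2009} is stated only for elements of $V_h$ --- which is also the only case the paper ever uses, namely the eigenvalue bound \eqref{eq:eigenes} applied to the eigenfunctions $\phi_{h,j}$ and the inverse estimate \eqref{eq:inverse} for $u\in V_h$. You should therefore restrict the proof (and, ideally, the statement) to $u\in V_h$, where your argument is complete, rather than attempt the $\dot H^2$ case.
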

\begin{rem}
 By Lemma \ref{lem:uniformes} we obtain the following bound for the eigenvalues of the discrete operator $\Lambda_{h}$
\begin{equation} \label{eq:eigenes}
\lambda_{h,j}\leq C_sh^{-2}, \quad j=1,\dots, N_h,
\end{equation}
since $(\Lambda_h\phi_{h,j},\phi_{h,j}) =B_h(\phi_{h,j},\phi_{h,j})=\lambda_{h,j}(\phi_{h,j},\phi_{h,j})$.
Additionally, we deduce the inverse estimate for any $u\in V_h$
\begin{equation} \label{eq:inverse}
\begin{aligned}
\|u\|_{h,\alpha} = \|\Lambda_h^{\alpha/2}u\| = \left(\sum\limits_{j=1}^{N_h} \lambda_{h,j}^{\alpha}(u,\phi_{h,j})^2\right)^{1/2}\leq  \sqrt{C_s}h^{-1}\|u\|_{h,\alpha-1}.
\end{aligned}
\end{equation}
\end{rem}

 The dG semi-discrete analogue of the first order formulation \eqref{wavesde} is: find $U_h=\left[ \ u_{h,1}, u_{h,2}
\right]^T\in V_h\times V_h$ such that  
\begin{equation} \label{eq:dgsde}
\begin{aligned}
&{\rm d}U_h(t) = A_h U_h(t){\rm d}t + F(U_h(t)){\rm d}t+ BP_h{\rm d}W,\quad t\in(0,T),\\
&U_h(\cdot,0)= U_{h, 0},
\end{aligned}
\end{equation}
where 
\begin{equation} \label{discreteA}
A_h = \begin{bmatrix} 0 &I \\ -\Lambda_h & 0 \end{bmatrix}, \quad U_{h, 0} = \left[ \begin{matrix}  u_{h, 0} \\ v_{h, 0}\end{matrix}\right], \quad F(U_h(t)) = \left[ \begin{matrix}  0 \\ u_{h,1}(t)
\end{matrix}\right],
\end{equation}
 and $B$ is as in \eqref{eq:defns_1storder}.
In order to ensure existence and uniqueness of problem (\ref{eq:dgsde}), we  first need to show that the discrete operator $A_h:V_h\times V_h\rightarrow V_h\times V_h$  satisfies the hypothesis of the Hille-Yosida Theorem \cite[Theorem 3.5]{engel2006}, i.e., generates  a strongly continuous contraction semigroup   on $V_h\times V_h$.
\begin{prop} \label{prop:hille}
The discrete operator $A_h:V_h\times V_h\rightarrow V_h\times V_h$ in (\ref{discreteA}) generates a strongly continuous contraction semigroup on $V_h\times V_h$.
\end{prop}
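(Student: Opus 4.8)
The plan is to verify the hypotheses of the Hille--Yosida (Lumer--Phillips) theorem directly for $A_h$ acting on the finite-dimensional Hilbert space $\mathcal{H}_h^1 = V_h \times V_h$ equipped with the inner product induced by $\vertiii{\cdot}_{h,1}$, namely $\langle [u_1,u_2]^T,[v_1,v_2]^T\rangle_{h,1} = (\Lambda_h^{1/2}u_1,\Lambda_h^{1/2}v_1) + (u_2,v_2)$. Since $V_h \times V_h$ is finite-dimensional, $A_h$ is automatically a bounded linear operator with $D(A_h) = V_h \times V_h$, so the only substantive points are (i) that $A_h$ is dissipative with respect to $\langle\cdot,\cdot\rangle_{h,1}$, and (ii) that $\lambda I - A_h$ is surjective for some (equivalently all) $\lambda > 0$. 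Actually, in finite dimensions surjectivity of $\lambda I - A_h$ for large $\lambda$ is immediate since $A_h$ is bounded, so the heart of the matter is the dissipativity estimate, which is what gives the contraction property of the semigroup rather than merely strong continuity.

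First I would compute $\langle A_h U_h, U_h\rangle_{h,1}$ for $U_h = [u_1,u_2]^T \in V_h\times V_h$. By definition of $A_h$ we have $A_h U_h = [u_2, -\Lambda_h u_1]^T$, so
\begin{equation*}
\langle A_h U_h, U_h\rangle_{h,1} = (\Lambda_h^{1/2}u_2, \Lambda_h^{1/2}u_1) + (-\Lambda_h u_1, u_2) = (\Lambda_h u_2, u_1) - (\Lambda_h u_1, u_2).
\end{equation*}
Since $\Lambda_h$ is self-adjoint on $(V_h,(\cdot,\cdot))$ — which follows from the symmetry of the bilinear form $B_h$ — the two terms cancel and $\langle A_h U_h, U_h\rangle_{h,1} = 0$. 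In particular $\mathrm{Re}\,\langle A_h U_h, U_h\rangle_{h,1} = 0 \le 0$, so $A_h$ is dissipative (indeed skew-adjoint) with respect to this inner product. This is the key computation, and it is precisely the reason the discrete energy norm $\vertiii{\cdot}_{h,1}$ is the natural one to work with: the discrete operator inherits the skew-symmetry of the continuous generator $A$ in \eqref{eq:defns_1storder}.

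Next I would check the range condition: for $\lambda > 0$ I must solve $(\lambda I - A_h)U_h = G_h$ for arbitrary $G_h = [g_1,g_2]^T \in V_h\times V_h$, i.e. $\lambda u_1 - u_2 = g_1$ and $\lambda u_2 + \Lambda_h u_1 = g_2$. Substituting $u_2 = \lambda u_1 - g_1$ into the second equation gives $(\lambda^2 I + \Lambda_h)u_1 = g_2 + \lambda g_1$; since $\Lambda_h$ is symmetric positive semidefinite, $\lambda^2 I + \Lambda_h$ is symmetric positive definite for every $\lambda>0$, hence invertible, so $u_1$ (and then $u_2$) is uniquely determined. Thus $\mathrm{Range}(\lambda I - A_h) = V_h\times V_h$ for all $\lambda>0$. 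Combining the dissipativity and the range condition, the Lumer--Phillips theorem (the form of Hille--Yosida cited as \cite[Theorem 3.5]{engel2006}) yields that $A_h$ generates a strongly continuous contraction semigroup $E_h(t) = e^{tA_h}$ on $V_h\times V_h$; since $A_h$ is skew-adjoint this semigroup is in fact a unitary group on $(\mathcal{H}_h^1,\vertiii{\cdot}_{h,1})$, $\vertiii{E_h(t)U_h}_{h,1} = \vertiii{U_h}_{h,1}$, which I would note as a remark as it will be used for the stability analysis of SVM. I do not anticipate a genuine obstacle here: the only subtlety is being careful to use the $\vertiii{\cdot}_{h,1}$ inner product (not the plain $L_2\times L_2$ one, in which $A_h$ is not dissipative), and to invoke the self-adjointness of $\Lambda_h$ which is recorded just before the statement.
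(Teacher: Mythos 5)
Your proposal is correct and follows essentially the same route as the paper: the paper's proof is a one-line citation to \cite[Chapter 10.3]{brezis2010}, where precisely this Lumer--Phillips/Hille--Yosida argument (skew-symmetry, i.e.\ dissipativity, of the wave operator in the energy inner product plus the range condition via inverting $\lambda^2 I+\Lambda$) is carried out for the continuous problem. You have simply written out the details explicitly in the discrete setting, correctly identifying $\vertiii{\cdot}_{h,1}$ as the right inner product and using the self-adjointness and positive definiteness of $\Lambda_h$ on $V_h$.
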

\begin{proof}
The proof of the proposition follows from \cite[Chapter 10.3]{brezis2010}.
\end{proof}
Similarly to (\ref{semigroup}), the $C_0$-semigroup $E_h(t)$ generated by the discrete operator $A_h$ is given by 
\begin{equation} \label{semigroupdiscrete}
E_h(t)=e^{tA_h}=\begin{bmatrix}
C_h(t) & \Lambda_h^{-1/2}S_h(t)\\
-\Lambda_h^{1/2}S_h(t) & C_h(t)
\end{bmatrix},
\end{equation}
where $C_h(t)=\cos\left(t\Lambda_h^{1/2}\right)$ and $S_h(t)=\sin\left(t\Lambda_h^{1/2}\right)$. 

Similarly to the continuous case, see Theorem~\ref{thm:cont_exist}, we have the existence of the mild solution to the semi-discrete system. 

\begin{lemma}\label{lem:disc_exist}
Assume that $W(t)$ satisfies Assumption \ref{assum:noise} and that  f satisfies (\ref{eq:assumf}). Also let $\|U_{h,0}\|_{L_2(\Omega,{\cal H}^{\beta}_h)}< \infty$, then  the dG formulation (\ref{eq:dgsde})  has a unique mild solution given by 
\begin{equation} \label{eq:discretemild}
U_h(t) = E_h(t)U_{h,0} + \int_0^tE_h(t-s)P_hF(U_h(s)){\rm d}s+ \int_0^tE_h(t-s)BP_h{\rm d}W(s).
\end{equation}
Further,   there exists a constant $C : = C(T, \|U_{h,0}\|_{L_2(\Omega,{\cal H}^{\beta}_h)}, \| \Lambda^{(\beta-1)/2}Q^{1/2}\|_{\rm HS})$, and  independent of $h$ such that 
\begin{equation} \label{eq:mildstab}
\| U_h(t)\|_{L_2(\Omega, {\cal H}^\beta_h)}\leq C, \quad 0\leq t\leq T.
\end{equation}
\end{lemma}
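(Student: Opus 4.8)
The plan is to follow the classical Banach fixed-point construction for semilinear SPDEs, exactly as in the proof of Theorem~\ref{thm:cont_exist}, but carried out on the finite-dimensional spaces $V_h\times V_h$ with all constants kept independent of $h$. One works on the Banach space of $\{{\cal F}_t\}$-predictable processes $X\colon[0,T_0]\to L_2(\Omega,{\cal H}^\beta_h)$ with $\sup_{0\le t\le T_0}\|X(t)\|_{L_2(\Omega,{\cal H}^\beta_h)}<\infty$, and defines
\begin{equation*}
(\Phi X)(t) = E_h(t)U_{h,0} + \int_0^t E_h(t-s)P_hF(X(s))\,{\rm d}s + \int_0^t E_h(t-s)BP_h\,{\rm d}W(s),
\end{equation*}
so that a fixed point of $\Phi$ is precisely a mild solution in the sense of \eqref{eq:discretemild}. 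The first ingredient is that, for every $\alpha\in\mathbb{R}$, the semigroup $E_h(t)$ is an isometry on ${\cal H}^\alpha_h$: writing $E_h(t)$ in the block form \eqref{semigroupdiscrete} and using that $C_h(t)$ and $S_h(t)$ are self-adjoint, commute with $\Lambda_h$, and satisfy $C_h(t)^2+S_h(t)^2=I$, a direct computation gives $\vertiii{E_h(t)v}_{h,\alpha}=\vertiii{v}_{h,\alpha}$ for all $v\in{\cal H}^\alpha_h$ and all $t\ge0$.

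Next one estimates the three terms of $\Phi X$ in $L_2(\Omega,{\cal H}^\beta_h)$. By the isometry property the first term equals $\|U_{h,0}\|_{L_2(\Omega,{\cal H}^\beta_h)}$. For the drift term one uses the isometry, the triangle inequality for Bochner integrals, and the fact that the nonlinearity $P_hF$ is globally Lipschitz and of linear growth as a map ${\cal H}^\beta_h\to{\cal H}^\beta_h$ with $h$-independent constants: since the only nontrivial component of $F(X)$ is $f(X_1)$, this follows from \eqref{eq:assumf} (applied with $\gamma=\max\{\beta-1,0\}$), the $h$-uniform stability of the $L_2$-projection $P_h$ in the relevant discrete (possibly negative-order) norms, the norm equivalence of Lemma~\ref{coer_and_cont}, and the mesh-independent coercivity of $B_h$ (which bounds the spectrum of $\Lambda_h$ away from zero). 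For the stochastic convolution one applies It\^{o}'s isometry \eqref{eq:itosisometry} together with the isometry of $E_h$, obtaining
\begin{equation*}
\left\|\int_0^t E_h(t-s)BP_h\,{\rm d}W(s)\right\|_{L_2(\Omega,{\cal H}^\beta_h)}^2 = \int_0^t \|BP_hQ^{1/2}\|_{{\cal L}_2(U,{\cal H}^\beta_h)}^2\,{\rm d}s = t\,\|\Lambda_h^{(\beta-1)/2}P_hQ^{1/2}\|_{\rm HS}^2,
\end{equation*}
and then bounds the right-hand side by $Ct\,\|\Lambda^{(\beta-1)/2}Q^{1/2}\|_{\rm HS}^2$ uniformly in $h$, again via the mapping properties of $P_h$; by Assumption~\ref{assum:noise} this is finite.

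Combining the three estimates shows that $\Phi$ maps the space into itself, and the Lipschitz bound on $P_hF$ makes $\Phi$ a contraction for $T_0$ small enough, depending only on the Lipschitz constant of $f$ and not on $h$. Banach's fixed-point theorem then gives a unique mild solution on $[0,T_0]$, and concatenating the solutions obtained on consecutive intervals of length $T_0$ yields existence and uniqueness of \eqref{eq:discretemild} on all of $[0,T]$. For the stability bound \eqref{eq:mildstab}, taking $L_2(\Omega,{\cal H}^\beta_h)$-norms in \eqref{eq:discretemild} and inserting the three estimates above gives
\begin{equation*}
\|U_h(t)\|_{L_2(\Omega,{\cal H}^\beta_h)} \le \|U_{h,0}\|_{L_2(\Omega,{\cal H}^\beta_h)} + C\sqrt{T}\,\|\Lambda^{(\beta-1)/2}Q^{1/2}\|_{\rm HS} + C\int_0^t\bigl(1+\|U_h(s)\|_{L_2(\Omega,{\cal H}^\beta_h)}\bigr)\,{\rm d}s,
\end{equation*}
and Gr\"{o}nwall's inequality yields \eqref{eq:mildstab} with the claimed constant $C=C(T,\|U_{h,0}\|_{L_2(\Omega,{\cal H}^\beta_h)},\|\Lambda^{(\beta-1)/2}Q^{1/2}\|_{\rm HS})$, independent of $h$.

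I expect the only real difficulty to be the $h$-uniformity of the projection estimates — in particular that $\|\Lambda_h^{\alpha/2}P_hv\|\le C\|\Lambda^{\alpha/2}v\|$ with $C$ independent of $h$ for the fractional orders $\alpha=\beta$ and $\alpha=\beta-1$ that appear above. Lemma~\ref{coer_and_cont} only supplies the case $\alpha=1$; the other orders require interpolation between the $L_2$-stability of $P_h$ and this energy-norm equivalence, combined with the approximation properties of $V_h$. These are precisely the estimates established for the spatial error analysis in the remainder of Section~\ref{sec:spacedis} and can be quoted from there, so that the rest is the routine Picard iteration and Gr\"{o}nwall argument already used for Theorem~\ref{thm:cont_exist}.
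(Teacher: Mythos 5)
Your proposal is correct and is essentially the paper's own argument: the paper's entire proof is the one-line citation to \cite{anton2015} (Proposition 3 there), which is precisely this Picard iteration transplanted to $V_h\times V_h$ using $\|v\|_{h,\beta}=\|\Lambda_h^{\beta/2}v\|$, the isometry of $E_h(t)$ on ${\cal H}_h^\alpha$, It\^{o}'s isometry, and Gr\"{o}nwall. The one point you rightly single out as delicate --- the $h$-uniform bound $\|\Lambda_h^{\alpha/2}P_h\Lambda^{-\alpha/2}\|_{{\cal L}(L_2({\cal D}))}\le C$ for fractional orders outside $[0,1]$, needed both for the forcing and for the linear-growth bound on $P_hF$ when $\beta>1$ --- is likewise left implicit by the paper, which only records \eqref{eq:lambdahlambda} for $\delta\in[0,1/2]$.
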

\begin{proof} 
The proof follows from \cite[Proposition 3]{anton2015} by using that $\| v\|_{h,\beta} =\|\Lambda_h^{\beta/2} v\|$.
\end{proof}

\subsection{Strong convergence in space}
\label{sec:errorspace}
In this subsection, we prove strong convergence of the dG approximation of the stochastic wave equation (\ref{eq:dgsde}), first with respect to the broken norm (\ref{eq:starnorm}) and then with respect to the $L_2$-norm.
 To analyze the strong convergence of the spatial approximation  (\ref{eq:dgsde}), we need to derive error estimates for the sine and cosine operators as in Corollary 4.2 in \cite{Kovacs2010}.  Before we state strong error estimates for the semi-discrete dG formulation (\ref{eq:dgsde}) we derive optimal error bounds for the deterministic homogeneous wave equation.

{\bf Deterministic homogeneous wave equation.}
We now look at the deterministic  homogeneous wave equation 
\begin{equation} \label{eq:homwave}
\begin{aligned}
&{\rm d}\dot{u}+\Lambda u  {\rm d}t =0 &&{\rm in}\;\; {\cal D}\times(0,T),\\
&\; u= 0  &&{\rm on } \; \;\partial {\cal D} \times(0,T),\\
&\;  u(\cdot,0) = u_0,\; \dot{u}(\cdot,0)= v_0 & &{\rm in} \;\; {\cal D}.
\end{aligned}
\end{equation}
The dG semi-discrete  formulation of (\ref{eq:homwave}) is: find $u_h(t)\in V_h$ such that
\begin{equation} 
\begin{aligned} \label{eq:dwave}
&\left( {\rm d} \dot{u}_h, v\right) +B_h\left( u_h,v\right){\rm d}t=0\quad \forall v\in V_h,\;t\in(0,T),\\
 &\,u_h(\cdot,0) =  u_{h,0}, \, \dot{u}_h(\cdot,0)=  v_{h,0}, \quad u_{h,0}, v_{h,0}\in V_h.\\
\end{aligned}
\end{equation}

We recall some useful results for the dG finite element method. 
For $ u \in \dot{H}^2({\cal D})$, the Galerkin projection $\Pi_h u\in V_h$ is defined as follows
\begin{equation} \label{eq:galerkpro}
B_h(\Pi_hu -u,v)=0, \quad v\in V_h.
\end{equation}

Since $\partial_t^i(\pi_Iu)=\pi_I(\partial_t^iu )$, $i=0,\dots,2$, where $\pi_I$ can be chosen as $P_h$ or $\Pi_h$, we have the following error bound in the $L_2$-norm 
\begin{equation}\label{eq:l2esproj}
\|\partial_t^i(u-\pi_Iu)\|\leq C h^{p+1}\| \partial_t^{i}u\|_{p+1},\qquad \partial_t^{i}u \in\dot{H}^{p+1}, \, p\geq1,
\end{equation}
where we recall that $p$ is the (local) polynomial degree of the discrete space $V_h$. 
The error estimate for the Galerkin projection in the broken norm (\ref{eq:starnorm})  is
\begin{equation}\label{eq:ellespro}
\| u-\Pi_hu\|_* \leq C h^p\|u\|_{p+1},\qquad u\in \dot{H}^{p+1}, \, p\geq1.
\end{equation}
Estimates (\ref{eq:l2esproj}) and (\ref{eq:ellespro}) can be found in Lemma 4.1 in \cite{Grote2009}.
\begin{theorem}
\label{the:deteres}
Let the exact solution $u$ of (\ref{eq:homwave}) satisfy  
\begin{equation*} 
u,\, \dot{u},\ddot{ u}\in L^\infty([0,T]; \dot{H}^{p+1}({\cal D})),
\end{equation*}
for $p\geq 1$, and  $u_h$ be the dG approximation obtained by (\ref{eq:dwave}). Setting $e(t)=u(t)-u_h(t)$, $t\in[0,T]$, we have for a constant  $C>0$, independent of the mesh size $h$,
\begin{align} 
\label{eq:enes}
&\| e(t)\|_* \leq C\left\{ \| u_0-\Pi_h u_0 \|_{*}+\|v_0 - \Pi_h v_0 \|\right\}
+h^{p} \left\{ \|u\|_{p+1} +\int_0^t \|\ddot{u}(s)\|_{p} \,{\rm d}s\right\},\\
\label{eq:l2vel}
&\|\dot{e}(t)\| \leq C\left\{ \| u_0-\Pi_h u_0 \|_{h,1}+\|v_0 - \Pi_h v_0 \|\right\}
+h^{p+1} \left\{ \| \dot{u}\|_{p+1} +\int_0^t \|\ddot{u}(s)\|_{p+1} \,{\rm d}s\right\},\\
\label{eq:l2dis}
\begin{split}
&\| e(t)\|\leq   C\left\{ \| u_0-\Pi_h u_0 \|+\|v_0 - P_h v_0 \|_{h,-1}\right\}\\
& \qquad \qquad \qquad + h^{p+1}\left\{
 \|u(s)\|_{p+1} + \int_0^t \|\dot{u}(s)\|_{p+1} \,{\rm d}s \right\}.
\end{split}
\end{align}
\end{theorem}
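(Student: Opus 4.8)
The plan is to compare $u_h$ with the elliptic (Galerkin) projection of $u$ and to propagate the error with the discrete $C_0$-semigroup $E_h(t)$. Write $e(t)=\rho(t)+\theta(t)$ with $\rho=u-\Pi_h u$ and $\theta=\Pi_h u-u_h\in V_h$. The term $\rho$ is pure approximation theory, controlled by \eqref{eq:l2esproj} and \eqref{eq:ellespro}. For $\theta$, subtract \eqref{eq:dwave} from the weak form of \eqref{eq:homwave} (the jump/flux terms of $B_h$ vanish on $\dot H^2$, so $B_h(u,v)=(\Lambda u,v)$ for $v\in V_h$) and use the defining property \eqref{eq:galerkpro}, $B_h(\rho,v)=0$; this gives, in $V_h$,
\begin{equation*}
\ddot\theta+\Lambda_h\theta=-P_h\ddot\rho,\qquad \theta(0)=\Pi_h u_0-u_{h,0},\quad \dot\theta(0)=\Pi_h v_0-v_{h,0},
\end{equation*}
where $\|P_h\ddot\rho\|=\|P_h\ddot u-\Pi_h\ddot u\|\le Ch^{p+1}\|\ddot u\|_{p+1}$ by \eqref{eq:l2esproj}. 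Representing $[\theta,\dot\theta]^T$ by the variation-of-constants formula with $E_h$ from \eqref{semigroupdiscrete} yields, componentwise,
\begin{align*}
\theta(t)&=C_h(t)\theta(0)+\Lambda_h^{-1/2}S_h(t)\dot\theta(0)-\int_0^t\Lambda_h^{-1/2}S_h(t-s)P_h\ddot\rho(s)\,{\rm d}s,\\
\dot\theta(t)&=-\Lambda_h^{1/2}S_h(t)\theta(0)+C_h(t)\dot\theta(0)-\int_0^tC_h(t-s)P_h\ddot\rho(s)\,{\rm d}s.
\end{align*}

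The proof then rests on the uniform bounds $\|C_h(t)\|,\|S_h(t)\|\le1$, $\|\Lambda_h^{\pm1/2}S_h(t)w\|\le\|w\|_{h,\pm1}$, $\|\Lambda_h^{1/2}C_h(t)w\|\le\|w\|_{h,1}$, and on the norm equivalence \eqref{eq:energynormeq} to pass between $\|\cdot\|_*$ and $\|\cdot\|_{h,1}$ on $V_h$. For \eqref{eq:enes}, apply $\Lambda_h^{1/2}$ to the $\theta$-formula, bound each term termwise, and add $\|\rho(t)\|_*$ via \eqref{eq:ellespro}. For \eqref{eq:l2vel}, bound $\|\dot\theta(t)\|$ directly from the second formula and add $\|\dot\rho(t)\|\le Ch^{p+1}\|\dot u\|_{p+1}$ via \eqref{eq:l2esproj} with $i=1$. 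For \eqref{eq:l2dis}, bound $\|\theta(t)\|$ from the first formula, now keeping the extra order supplied by the smoothing factor $\Lambda_h^{-1/2}$ in the sine propagator, and add $\|\rho(t)\|\le Ch^{p+1}\|u\|_{p+1}$; to replace $\ddot u$ by $\dot u$ in the source term and recover the sharp $h^{p+1}$ rate, integrate by parts once in time in the Duhamel integral, writing $P_h\ddot\rho=\partial_s(P_h\dot\rho)$ and using $\tfrac{{\rm d}}{{\rm d}s}\big[\Lambda_h^{-1/2}S_h(t-s)\big]=-C_h(t-s)$, which moves one derivative onto $\rho$ at the cost only of bounded cosine operators and leaves a boundary term in $\dot\rho(0)=v_0-\Pi_h v_0$. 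Choosing $u_{h,0},v_{h,0}$ as the relevant ($\Pi_h$- or $P_h$-) projections of the data turns $\theta(0)$, $\dot\theta(0)$ and this boundary term into precisely the initial-data quantities appearing on the right-hand sides (with $\|\cdot\|_{h,-1}$ attached to the velocity datum exactly because it enters through $\Lambda_h^{-1/2}S_h$).

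The routine parts are the projection estimates and the direct termwise bounds, which need no Gronwall argument since the source $-P_h\ddot\rho$ is independent of $\theta$. The main obstacle is obtaining the \emph{optimal} orders: the gain of one power of $h$ in the $L_2$ estimates \eqref{eq:l2vel}--\eqref{eq:l2dis} over the energy estimate \eqref{eq:enes} forces us to fully exploit the $\Lambda_h^{-1/2}$ smoothing in the sine operator, the optimal $L_2$-bound for $\Pi_h$ in \eqref{eq:l2esproj}, and the integration-by-parts-in-time manoeuvre that trades $\ddot\rho$ for $\dot\rho$ without losing a power of $h$; one must also check that $P_h\ddot\rho=P_h\ddot u-\Pi_h\ddot u$ still enjoys the full order and that the manipulations of $B_h$ are legitimate on the non-conforming space $\dot H^2+V_h$.
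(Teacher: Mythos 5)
Your proposal is correct, and it shares the paper's backbone: the same splitting $e=\rho+\theta$ with $\rho=u-\Pi_hu$ defined through the Galerkin projection \eqref{eq:galerkpro}, the same error equation $(\ddot\theta,v)+B_h(\theta,v)=-(\ddot\rho,v)$, and the same input estimates \eqref{eq:l2esproj}, \eqref{eq:ellespro} and the norm equivalence \eqref{eq:energynormeq}. Where you diverge is in how the $\theta$-equation is estimated. The paper tests with $v=\dot\theta$ and runs the classical energy argument (citing Larsson--Thom\'ee) to get $\|\dot\theta(t)\|+\|\theta(t)\|_{h,1}$ in one stroke, which yields \eqref{eq:enes} and \eqref{eq:l2vel}; for the optimal $L_2$ displacement bound \eqref{eq:l2dis} it defers entirely to Theorem~4.1 of Kov\'acs--Larsson--Saedpanah via the first-order reformulation. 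You instead treat all three estimates uniformly through the explicit variation-of-constants formula with the discrete cosine/sine operators from \eqref{semigroupdiscrete}, reading off each bound from the boundedness of $C_h$, $S_h$ and the smoothing of $\Lambda_h^{-1/2}S_h$, and you make explicit the integration-by-parts-in-time manoeuvre ($P_h\ddot\rho=\partial_s(P_h\dot\rho)$, $\tfrac{d}{ds}[\Lambda_h^{-1/2}S_h(t-s)]=-C_h(t-s)$, with $S_h(0)=0$ killing the upper boundary term) that produces the extra power of $h$ in \eqref{eq:l2dis} — which is precisely the mechanism hidden inside the paper's citation. What your route buys is a self-contained proof of \eqref{eq:l2dis} and a transparent account of where each order of $h$ comes from; what the energy method buys is robustness (no functional calculus for $\Lambda_h$ needed), though since $\Lambda_h$ is symmetric positive definite here both are legitimate. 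Two minor points to tidy: your parenthetical that the "jump/flux terms vanish" for $u\in\dot H^2$ should really be the standard consistency identity $B_h(u,v)=(\Lambda u,v)$ obtained by elementwise integration by parts (the term involving $\llbracket v\rrbracket$ does not vanish termwise but cancels against the volume boundary contributions); and the bound on the $s=0$ boundary term and on $\dot\theta(0)$ uses $\|\Lambda_h^{-1/2}\|_{{\cal L}(V_h)}\le C$, which holds but deserves a word.
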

\begin{proof}
We set as in \cite{larsson2008}
\begin{equation} \label{eq:erreq}
e = u - \Pi_h u + \Pi_h u - u_h =\rho + \theta. 
\end{equation}
Then, using the Galerkin projection (\ref{eq:galerkpro}), the error satisfies 
\begin{equation} \label{eq:erreqe} 
(\ddot{\theta},v)  + B_h(\theta, v) = - (\ddot{\rho},v) \qquad \forall v\in V_h. 
\end{equation}
Choosing $v=\dot{\theta}$ and using (\ref{coercivity}), we conclude in the standard way that, see \cite[Theorem 13.1]{larsson2008},
\begin{equation} 
\begin{aligned} 
\|\dot{\theta}(t)\| + \|\theta(t)\|_{h,1} \leq C\left\{  \| \theta(0) \|_{h,1}+ \|\dot{\theta}(0) \|+\int_{0}^t\|\ddot{\rho}(s) \| {\rm d}s\right\}.
\end{aligned}
\end{equation} 
By the triangle inequality, the norm equivalence \eqref{eq:energynormeq}, and estimates \eqref{eq:l2esproj} for $\pi_I=\Pi_h$, and \eqref{eq:ellespro}, we conclude \eqref{eq:enes} and \eqref{eq:l2vel}. 

 Although, \cite{Kovacs2010} uses continuous polynomials, the proof of  estimate (\ref{eq:l2dis}), follows along the same lines as \cite[Theorem 4.1]{Kovacs2010} by rewriting the problem in a first-order formulation and using estimates \eqref{eq:l2esproj} for the Galerkin projection \eqref{eq:galerkpro}.  
\end{proof}
\begin{rem} 
In \cite{grote2006} optimal convergence rates are derived for the displacement with  respect to the $L_2$-norm and with respect to the broken norm (\ref{eq:starnorm}) for \eqref{eq:homwave}. A  bound for the velocity in the $L_2$-norm follows from \cite[Theorem 4.1]{grote2006}, but is not optimal. Theorem \ref{the:deteres} provides optimal error estimates for both the displacement and the velocity with respect to the $L_2$-norm.
\end{rem} 
In the following lemma we state error estimates for the sine and cosine operators.
\begin{lemma} 
Denote $U_0=[u_0, v_0]^T$ and let 
\begin{equation} \label{FhGh}
\begin{aligned}
&G_h(t)U_0 =\left( C_h(t)\Pi_h - C(t)\right)u_0 +\Lambda_h^{-1/2}S_h(t)v_{h,0}-\Lambda^{-1/2}S(t)v_0,\\
&\dot{G}_h(t)U_0 = -\left(\Lambda_h^{1/2}S_h(t)\Pi_h-\Lambda^{1/2}S(t) \right)u_0+\left( C_h(t)\Pi_h-C(t)\right)v_0.
\end{aligned}
\end{equation}
Then there exists a constant  $C>0$  independent of the mesh size $h$ such that: 
\begin{enumerate}[i.]
\item  If $v_{h,0} = \Pi_h v_0$, then
\begin{equation} \label{Gherror}
\| G_h(t)U_0 \|_*\leq C(1+t)h^{\frac{p}{p+1}(\beta-1)}\vertiii{ U_0}_\beta,\quad t\in[0,T], \quad \beta \in[1,p+2].
\end{equation}
\item If $v_{h,0} = P_h v_0$, then 
\begin{equation} \label{Fherror}
\| G_h(t)U_0 \|\leq C(1+t)h^{\frac{p+1}{p+2}\beta}\vertiii{ U_0}_\beta, \quad t\in[0,T],\quad \beta \in[0,p+2].
\end{equation}
\item [iii.] If $v_{h,0} = \Pi_h v_0$, then
\begin{equation}\label{dotGherror}
\| \dot{G}_h(t)U_0 \|\leq C(1+t)h^{\frac{p+1}{p+2}(\beta-1)}\vertiii{ U_0}_\beta, \quad t\in[0,T], \quad \beta \in[1,p+3].
\end{equation}
\end{enumerate}
\end{lemma}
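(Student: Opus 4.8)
The plan is to reduce each of the three bounds to the deterministic error estimates already established in Theorem~\ref{the:deteres}, applied to suitably chosen initial data, and then to interpolate. Observe that $G_h(t)U_0$ is precisely the error $e(t)=u(t)-u_h(t)$ of the semi-discrete homogeneous wave equation \eqref{eq:homwave}--\eqref{eq:dwave} at time $t$, when the exact equation is started from $(u_0,v_0)$ and the discrete equation from $(\Pi_h u_0, v_{h,0})$; similarly $\dot G_h(t)U_0=\dot e(t)$. Indeed, writing out $E(t)U_0$ and $E_h(t)[\Pi_h u_0,v_{h,0}]^T$ using \eqref{semigroup} and \eqref{semigroupdiscrete} and subtracting gives exactly \eqref{FhGh}. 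So the three claimed inequalities are nothing but \eqref{eq:enes}, \eqref{eq:l2dis}, \eqref{eq:l2vel} respectively, rewritten: for (i) the broken-norm estimate with $v_{h,0}=\Pi_h v_0$, for (ii) the $L_2$ displacement estimate with $v_{h,0}=P_h v_0$, for (iii) the $L_2$ velocity estimate with $v_{h,0}=\Pi_h v_0$.

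First I would make this identification precise and note that when the initial data are taken to be the Galerkin/$L_2$ projections of $u_0,v_0$, the "data terms" in Theorem~\ref{the:deteres} such as $\|u_0-\Pi_h u_0\|_*$, $\|v_0-\Pi_h v_0\|$, $\|v_0-P_h v_0\|_{h,-1}$, and $\|u_0-\Pi_h u_0\|$ all become projection errors. Then I would invoke the projection estimates \eqref{eq:l2esproj} and \eqref{eq:ellespro} to bound each of these, together with the integral terms $\int_0^t\|\ddot u(s)\|_p\,ds$ etc., which for the homogeneous equation can be rewritten via $\ddot u=-\Lambda u$, so $\|\ddot u(s)\|_\ell = \|u(s)\|_{\ell+2}$ and $\|\dot u(s)\|_{p+1}=\|v(s)\|_{p+1}$ are controlled (using that $C(t),S(t)$ are contractions on every $\dot H^\alpha$) by $\|u_0\|_{\ell+2}$ and $\|v_0\|_{p+1}$, hence by $\vertiii{U_0}_{p+2}$ (respectively the appropriate $\vertiii{U_0}_\beta$). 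This produces, at the endpoint $\beta=p+2$ (resp.\ $p+3$ for the velocity), bounds of the form $C(1+t)h^{p-1}\vertiii{U_0}_{p+2}$ for (i), $C(1+t)h^{p+1}\vertiii{U_0}_{p+2}$ for (ii), $C(1+t)h^{p+1}\vertiii{U_0}_{p+3}$ for (iii). At the other endpoint $\beta=1$ (resp.\ $\beta=0$ for (ii)), the crude stability bound $\|G_h(t)U_0\|_* \le C(\|u(t)\|_*+\|u_h(t)\|_*)\le C\vertiii{U_0}_1$ (and analogously in $L_2$) holds with no power of $h$, using the energy identity / contractivity of $E_h$ on ${\cal H}_h^1$ in the $*$-norm and on ${\cal H}_h^0$ in $L_2$, plus the norm equivalence \eqref{eq:energynormeq} and the boundedness of $\Pi_h$ in these norms.

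The final step is interpolation: having a bound $\lesssim (1+t)\vertiii{U_0}_{\beta_0}$ at the low endpoint $\beta_0$ and $\lesssim (1+t)h^{r}\vertiii{U_0}_{\beta_1}$ at the high endpoint $\beta_1$, an interpolation argument between the spaces ${\cal H}^{\beta_0}$ and ${\cal H}^{\beta_1}$ yields $\lesssim (1+t)h^{r\,(\beta-\beta_0)/(\beta_1-\beta_0)}\vertiii{U_0}_\beta$ for intermediate $\beta$. For (i) this is $r=p-1$, $\beta_0=1$, $\beta_1=p+2$, giving exponent $(p-1)(\beta-1)/(p+1)$; for (ii), $r=p+1$, $\beta_0=0$, $\beta_1=p+2$, giving $(p+1)\beta/(p+2)$; for (iii), $r=p+1$, $\beta_0=1$, $\beta_1=p+3$, giving $(p+1)(\beta-1)/(p+2)$ --- matching \eqref{Gherror}, \eqref{Fherror}, \eqref{dotGherror} exactly. (Note the exponent in \eqref{Gherror} is written as $\tfrac{p}{p+1}(\beta-1)$ rather than $\tfrac{p-1}{p+1}(\beta-1)$, so I would double-check whether the high-endpoint rate is actually $h^p$ in the $*$-norm --- one gets $h^p$ from \eqref{eq:ellespro} and $h^{p+1}$ from the $L_2$ projection error, and the dominant term dictates the combined rate; I would reconcile this carefully.)

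The main obstacle I anticipate is precisely this bookkeeping of which projection ($\Pi_h$ vs.\ $P_h$) is used where and which norm ($*$, $h,1$, $h,-1$, or plain $L_2$) appears on each data term, since Theorem~\ref{the:deteres} mixes them; getting the right endpoint regularity exponents $\beta_1=p+2$ vs.\ $p+3$ and the right $h$-power in each case requires matching the projection-error order to the regularity used. A secondary technical point is justifying the interpolation step rigorously: one should phrase it as interpolation of the linear operator $U_0\mapsto G_h(t)U_0$ (for fixed $h,t$) between the pairs of Hilbert scales, which is legitimate since $G_h(t)$ is linear and bounded on each, with the stated operator-norm bounds at the endpoints; the constant $C$ and the $(1+t)$ factor pass through the interpolation unchanged. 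Everything else is a direct appeal to the already-proven Theorem~\ref{the:deteres}, the projection bounds \eqref{eq:l2esproj}--\eqref{eq:ellespro}, and the contractivity of the sine/cosine families.
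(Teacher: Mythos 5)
Your proposal follows essentially the same route as the paper: the paper's proof is precisely to identify $G_h(t)U_0$ and $\dot G_h(t)U_0$ with the error and velocity error of the deterministic homogeneous problem, feed in the estimates of Theorem~\ref{the:deteres} together with the projection bounds \eqref{eq:l2esproj}--\eqref{eq:ellespro} and the norm equivalence \eqref{eq:energynormeq}, and interpolate between the trivial stability bound and the full-regularity endpoint, exactly as in Corollary~4.2 of \cite{Kovacs2010}. On the one point you flagged: the high-regularity endpoint rate for (i) is $h^{p}$, not $h^{p-1}$ --- in \eqref{eq:enes} the data term $\|u_0-\Pi_h u_0\|_*$ is $O(h^{p})$ by \eqref{eq:ellespro}, the term $\|v_0-\Pi_h v_0\|$ is $O(h^{p+1})$, and the remaining terms carry an explicit factor $h^{p}$ with $\|\ddot u\|_{p}=\|u\|_{p+2}$ requiring $U_0\in{\cal H}^{p+2}$, so interpolating $h^{p}$ at $\beta_1=p+2$ against $h^{0}$ at $\beta_0=1$ gives exactly the exponent $\tfrac{p}{p+1}(\beta-1)$ of \eqref{Gherror}.
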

\begin{proof}
In \cite{Kovacs2010},  problem (\ref{eq:dwave}) is discretized in space by using piecewise continuous polynomials and error bounds are derived for the $G_h$ and $\dot{G}_h$ operators in terms of initial conditions in Corollary 4.2.
The proof of the above lemma follows from \cite{Kovacs2010},
 employing the estimates  in Theorem \ref{the:deteres} and by the norm equivalence \eqref{eq:energynormeq}.
\end{proof}

As a corollary, we obtain  the  following error estimates.
\begin{theorem} 
\label{the:dgmethodes}
Assume that  f satisfies \eqref{eq:assumf}. Denote $U_0=[u_0, v_0]^T$,  and let $U=[u_1, u_2]^T$ and   $U_h=[u_{h,1},u_{h,2}]^T$ be given by (\ref{mildsolution}) and (\ref{eq:discretemild}), respectively. Choosing $u_{h, 0}=\Pi_h u_0$, we have, for $t\in[ 0,T]$  the following estimates: 
\begin{enumerate}[i.]
\item   If $v_{h,0} =P_h v_0$ and $W(t)$ satisfies Assumption \ref{assum:noise},  for some  $\beta \geq  0$, then 
\begin{equation} \label{stoerror1}
\| u_{h,1}(t)-u_1(t)\|_{L_2\left( \Omega,L_2({\cal D})\right)}\leq Ch^{\min(\frac{p+1}{p+2}\beta,p+1)}.
\end{equation}
\item  If $v_{h,0} = \Pi_h v_0$ and  $W(t)$ satisfies Assumption \ref{assum:noise} for some $\beta \geq 1$, then 
\begin{equation} \label{stoerrordgnorm}
\| u_{h,1}(t)-u_1(t)\|_{L_2\left( \Omega, \dot{H}_h^1\right)}\leq C h^{\min(\frac{p}{p+1}(\beta-1),p+1)}.
\end{equation}
\item  If $v_{h,0} = \Pi_h v_0$ and $W(t)$ satisfies Assumption \ref{assum:noise}  for some $\beta \geq 1$, then 
\begin{equation} \label{stoerror2}
\| u_{h,2}(t)-u_2(t)\|_{L_2\left( \Omega,L_2({\cal D})\right)}\leq  Ch^{\min(\frac{p+1}{p+2}(\beta-1),p+1)}.
\end{equation}
\end{enumerate}
The constant $C$ depends on $t, \|U_{0}\|_{L_2(\Omega,{\cal H}^{\beta})}, \|\Lambda^{(\beta-1)/2}Q^{1/2}\|_{\rm HS}$, and is independent of $h$.
\end{theorem}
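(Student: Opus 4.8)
The plan is to subtract the two mild-solution formulas \eqref{mildsolution} and \eqref{eq:discretemild} and split $U_h(t)-U(t)$ into a \emph{homogeneous term} $E_h(t)U_{h,0}-E(t)U_0$, a \emph{drift term} $\int_0^t\big(E_h(t-s)P_hF(U_h(s))-E(t-s)F(U(s))\big)\,{\rm d}s$, and a \emph{stochastic convolution} $\int_0^t\big(E_h(t-s)BP_h-E(t-s)B\big)\,{\rm d}W(s)$. For each of (i)--(iii) one extracts the appropriate scalar component of these vectors — the first component of the semigroup for the displacement estimates (i),(ii), the second for the velocity estimate (iii) — and, modulo the nonlinearity, identifies it with the operators $G_h$, $\dot G_h$ of the preceding lemma; thus \eqref{Fherror} feeds (i), \eqref{Gherror} (in the broken norm \eqref{eq:starnorm}, equivalent to $\|\cdot\|_{h,1}$ on $V_h$ by \eqref{eq:energynormeq}) feeds (ii), and \eqref{dotGherror} feeds (iii). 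Since the statement is really a corollary of that lemma, the task is to assemble these ingredients with It\^o's isometry and Gronwall's inequality.

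The homogeneous term is immediate: with the prescribed $u_{h,0}=\Pi_h u_0$ and $v_{h,0}$ as stated, its displacement component is exactly $G_h(t)U_0$ and its velocity component $\dot G_h(t)U_0$, so \eqref{Gherror}--\eqref{dotGherror} together with $\|U_0\|_{L_2(\Omega,{\cal H}^\beta)}<\infty$ give the asserted powers of $h$ — and when $\beta>p+2$ one simply uses $\dot H^\beta\hookrightarrow\dot H^{p+2}$, which is why the exponents are truncated at $p+1$. For the drift term I would add and subtract $\int_0^tE_h(t-s)P_hF(U(s))\,{\rm d}s$. Since $E_h$ generates a contraction semigroup (Proposition \ref{prop:hille}) and $F(U_h)-F(U)=[0,f(u_{h,1})-f(u_1)]^T$, the Lipschitz bound in \eqref{eq:assumf} reduces $\int_0^tE_h(t-s)P_h\big(F(U_h(s))-F(U(s))\big)\,{\rm d}s$ to the term $C\int_0^t\|u_{h,1}(s)-u_1(s)\|\,{\rm d}s$, to be removed at the very end by Gronwall. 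The remaining part $\int_0^t\big(E_h(t-s)P_h-E(t-s)\big)F(U(s))\,{\rm d}s$ is, componentwise, $\int_0^tG_h(t-s)[0,f(u_1(s))]^T\,{\rm d}s$ (resp.\ with $\dot G_h$), so \eqref{Gherror}--\eqref{dotGherror} apply with $\vertiii{[0,f(u_1(s))]^T}_\beta=\|f(u_1(s))\|_{\beta-1}\le C\big(1+\|u_1(s)\|_{\beta-1}\big)$ by the growth bound in \eqref{eq:assumf} (for $\beta<1$, embedding $L_2\hookrightarrow\dot H^{\beta-1}$ first); the uniform-in-$s$ regularity $u_1(s)\in\dot H^\beta$ needed here is exactly Theorem \ref{thm:cont_exist}.

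The stochastic convolution is the step needing most care. By It\^o's isometry \eqref{eq:itosisometry}, the $L_2(\Omega,\cdot)$-norm of the relevant component of $\int_0^t\big(E_h(t-s)BP_h-E(t-s)B\big)\,{\rm d}W(s)$ is controlled by $\int_0^t\|R_h(t-s)Q^{1/2}\|_{\rm HS}\,{\rm d}s$, with $R_h(\tau)=\Lambda_h^{-1/2}S_h(\tau)P_h-\Lambda^{-1/2}S(\tau)$ for (i),(ii) and $R_h(\tau)=C_h(\tau)P_h-C(\tau)$ for (iii). Reading \eqref{Fherror} and \eqref{dotGherror} as operator bounds on $v_0\mapsto G_h(\tau)[0,v_0]^T$ and $v_0\mapsto\dot G_h(\tau)[0,v_0]^T$ and factoring $R_h(\tau)=\big(R_h(\tau)\Lambda^{-(\beta-1)/2}\big)\Lambda^{(\beta-1)/2}$ gives $\|R_h(\tau)Q^{1/2}\|_{\rm HS}\le C(1+\tau)h^{\theta}\|\Lambda^{(\beta-1)/2}Q^{1/2}\|_{\rm HS}$, with $\theta$ the exponent of \eqref{Fherror} (resp.\ \eqref{dotGherror}) and the last factor finite by Assumption \ref{assum:noise}; integration in $s$ preserves the rate. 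Collecting the three contributions and invoking Gronwall to absorb $C\int_0^t\|u_{h,1}(s)-u_1(s)\|\,{\rm d}s$ yields \eqref{stoerror1} first, and then \eqref{stoerrordgnorm}--\eqref{stoerror2} follow once that $L_2$ displacement rate is used inside the Lipschitz terms. I expect the only real friction inside this last step to be two bookkeeping points: casting the lemma's pointwise-in-data estimates in Hilbert--Schmidt form against $Q^{1/2}$, and reconciling the $P_h$ in $BP_h\,{\rm d}W$ with the $\Pi_h$ appearing in \eqref{Gherror} and \eqref{dotGherror} — the latter by peeling off $\Lambda_h^{-1/2}S_h(\Pi_h-P_h)$ (resp.\ $C_h(\Pi_h-P_h)$) and bounding it via the $L_2$ projection estimate \eqref{eq:l2esproj}.
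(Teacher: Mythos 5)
Your argument is exactly the one the paper relies on: the paper's proof of this theorem is a two-line citation to Theorem 4 of the Anton--Cohen--Larsson--Molla reference, whose proof proceeds precisely by subtracting the two mild solutions, bounding the homogeneous and drift parts via the $G_h$, $\dot G_h$ estimates, treating the stochastic convolution with It\^o's isometry after factoring out $\Lambda^{(\beta-1)/2}$, and closing with Gronwall. Your reconstruction, including the $\Pi_h$ versus $P_h$ bookkeeping, fills in the details the paper omits but is the same approach.
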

\begin{proof} 
 Estimates (\ref{stoerror1}) and (\ref{stoerror2}) are proved as in \cite[Theorem 4]{anton2015} where piecewise continuous polynomials are used for the spatial discretization.
The proof of estimate \eqref{stoerrordgnorm} follows along the same lines with the proof of  (\ref{stoerror1}).

\end{proof}

\section{The stochastic position Verlet method}
\label{sec:fullydiscrete}
We now  consider the full discretization of the  stochastic wave equation (\ref{wave}). Our goal  is to prove optimal convergence of the strong error for the full discretization of problem (\ref{wave}).  
 Let $\tau >0$ be the time step size, so that $t_n = n\tau$, $n= 1, 2, \dots, N$, and $T=N\tau$. Letting $X^n=[X_1^n, X_2^n]^T$ be the numerical approximation of $U_h(t_n)$ in (\ref{eq:dgsde}), the stochastic position Verlet (SVM) scheme is 
\begin{equation} \label{eq:stochasticsv}
\begin{aligned}
&X_1^{n-1/2} = X_1^{n-1} + \frac{\tau}{2} X_2^{n-1},\\
&X_2^{n} = X_2^{n-1} -\tau\Lambda_h X^{n-1/2}_1 + \tau P_h f(X_1^{n-1/2})+  P_h\Delta W^n,\\
&X_1^{n} = X^{n-1/2}_1 + \frac{\tau}{2} X_2^{n},
\end{aligned}
\end{equation}
where $X^0 = U_{h,0}$ and $\Delta W^n = W(t_{n}) - W(t_{n-1})$. 

\subsection{ Stability of the scheme}
To study the stability of the above scheme, we rewrite system (\ref{eq:stochasticsv}) as follows 
\begin{equation} \label{eq:stochasticsv_1storder}
X^{n} = M X^{n-1} +\tau DP_hf(X^{n-1/2}_1) + D P_h\Delta W^n, 
\end{equation} 
where $D = \left[ \frac{\tau}{2}I, I \right]^T$, and
\begin{equation} \label{eq:matrixM}
M(\tau)=\left[ \begin{matrix} I-\frac{\tau^2}{2}\Lambda_h & \tau I - \frac{\tau^3}{4}\Lambda_h  \\ -\tau \Lambda_h& I - \frac{\tau^2}{2}\Lambda_h\end{matrix}\right].
\end{equation}
 By recursion, the approximate solution $X^{n}$ can be written as
\begin{equation} \label{eq:svnsteps2}
X^{n} = M^{n}X^0 + \sum\limits_{j=1}^{n}M^{n-j}\tau DP_hf(X^{j-1/2}_1)+ \sum\limits_{j=1}^{n}M^{n-j}DP_h\Delta W^{j}.
\end{equation}

We study the stability of the scheme under a CFL  condition \cite{CFL}. More specifically, we assume that the mesh size $h$ and the time step $\tau$ satisfy the following restriction
\begin{equation} \label{eq:CFL}
\tau <C_{\rm CFL}h,
\end{equation}
where $C_{\rm CFL}<\frac{2}{\sqrt{C_S}}$  and $C_s$ is the constant from Lemma \ref{lem:uniformes}.
\subsubsection{Discrete norm}
Under the  CFL condition \eqref{eq:CFL}, we  introduce the following inner product for  $v=[v_1, v_2]^T$,  $w=[w_1,w_2]^T \in {\cal H}_h^{\alpha}$
\begin{equation} \label{eq:innerm}
 \langle v,w\rangle_{m,\alpha} =(V^{-1}w)^* \begin{bmatrix}
\Lambda_h^{\alpha-1} & 0 \\
0 & \Lambda_h^{\alpha-1}  
\end{bmatrix} V^{-1}v, \quad \alpha\in\mathbb{R},
\end{equation}  
where $(V^{-1}w)^*$ is the conjugate transpose, and
\begin{equation*}
V^{-1}=\left[ \begin{matrix}i \Lambda_h^{1/2}(I-\frac{\tau^2}{4}\Lambda_h)^{-1/2} /2& I/2\\  -i\Lambda_h^{1/2}(I-\frac{\tau^2}{4}\Lambda_h)^{-1/2}/2 & I/2\end{matrix}\right].
\end{equation*} 
The inner product $ \langle \cdot,\cdot\rangle_{m,\alpha}$ defines the discrete norm 
\begin{equation} \label{eq:discretemnorm}
\|v\|_{m, \alpha}^2 = \langle v,v\rangle_{m,\alpha} \quad \alpha\in \mathbb{R},\, v=[v_1, v_2]^T\in {\cal H}_h^{\alpha}.
\end{equation} 

A key result for the stability analysis of our scheme is showing that the time integrator $M$ \eqref{eq:matrixM} preserves the $\| \cdot \|_{m,\alpha}$ norm. To do so, we consider the  following spectral decomposition of the matrix $M$  in \eqref{eq:matrixM}. 
\begin{lemma}
\label{lem:spectraldec}
  Under the CFL condition \eqref{eq:CFL} we have that
\begin{equation*}
M = VDV^{-1}
\end{equation*}
where  $V=
\begin{bmatrix}
  m_+ & m_-
\end{bmatrix}$ and 
 $D=\operatorname{diag}(\mu_+,\mu_-)$ with
\begin{equation} \label{eq:eigenM}
\mu_{\pm}=I-\frac{\tau^2\Lambda_{h}}{2}\pm i\tau\Lambda_{h}^{1/2}(I-\tau^2\Lambda_{h}/4)^{1/2},
\end{equation}
and
\begin{equation} \label{eq:vecM}
m_{\pm} =\left[ \begin{matrix}\pm i\Lambda_h^{-1/2}(I-\frac{\tau^2}{4}\Lambda_{h})^{1/2}\\I \end{matrix}\right].
\end{equation}
Further, $ \mu_+^* \mu_+=I$ and $\mu_-^* \mu_-= I$, where $\mu_\pm^*$ is the adjoint  with respect to the $L^2$-inner product, respectively .
\end{lemma}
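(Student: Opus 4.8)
The plan is to verify the spectral decomposition by direct computation and then to check the unitarity of the eigenvalues by an explicit multiplication. Since $\Lambda_h$ is symmetric positive definite on the finite-dimensional space $V_h$, it admits an orthonormal eigenbasis $\{\phi_{h,j}\}$ with eigenvalues $\lambda_{h,j}>0$, and every function of $\Lambda_h$ appearing in the statement (square roots, $(I-\tau^2\Lambda_h/4)^{1/2}$, $(I-\tau^2\Lambda_h/4)^{-1/2}$) is defined by functional calculus on this basis; under the CFL condition \eqref{eq:CFL} combined with the eigenvalue bound \eqref{eq:eigenes}, namely $\lambda_{h,j}\le C_s h^{-2}$ and $\tau^2 C_s h^{-2} < C_{\rm CFL}^2 C_s < 4$, we have $0 < 1-\tau^2\lambda_{h,j}/4$, so $(I-\tau^2\Lambda_h/4)^{1/2}$ and its inverse are well-defined, symmetric, positive definite operators on $V_h$. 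Because all these operators commute (they are all functions of $\Lambda_h$), the whole computation reduces, eigenvalue by eigenvalue, to a $2\times 2$ matrix identity in the scalar $\lambda=\lambda_{h,j}$, which I will carry out once.

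First I would fix a scalar $\lambda>0$ with $\tau^2\lambda/4<1$, write $c=(1-\tau^2\lambda/4)^{1/2}$, and consider the scalar matrix $M_\lambda=\begin{bmatrix} 1-\tau^2\lambda/2 & \tau - \tau^3\lambda/4 \\ -\tau\lambda & 1-\tau^2\lambda/2\end{bmatrix}$, the eigenvalue-$\lambda$ block of $M$. Its trace is $2(1-\tau^2\lambda/2)$ and its determinant is $(1-\tau^2\lambda/2)^2 + \tau^2\lambda(1-\tau^2\lambda/4) = 1$ after expansion, so the characteristic polynomial is $\mu^2 - 2(1-\tau^2\lambda/2)\mu + 1=0$, giving $\mu_\pm = (1-\tau^2\lambda/2) \pm i\tau\lambda^{1/2} c$, exactly \eqref{eq:eigenM} read at $\lambda$. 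Next I would check that $m_\pm(\lambda) = [\pm i\lambda^{-1/2} c,\, 1]^T$ is the corresponding eigenvector: applying $M_\lambda$ to it, the first component is $(1-\tau^2\lambda/2)(\pm i\lambda^{-1/2}c) + (\tau-\tau^3\lambda/4) = \pm i\lambda^{-1/2}c(1-\tau^2\lambda/2) + \tau c^2$, and I want this to equal $\mu_\pm \cdot(\pm i\lambda^{-1/2}c) = \pm i\lambda^{-1/2}c(1-\tau^2\lambda/2) - \tau\lambda^{1/2}c\cdot\lambda^{-1/2}c = \pm i\lambda^{-1/2}c(1-\tau^2\lambda/2) - \tau c^2$; wait — the signs must be tracked carefully, and in fact $\mu_\pm m_\pm$ in the first slot is $(\pm i\lambda^{-1/2}c)\mu_\pm$, and one checks the second slot gives $-\tau\lambda(\pm i\lambda^{-1/2}c)+(1-\tau^2\lambda/2) = \mp i\tau\lambda^{1/2}c + (1-\tau^2\lambda/2) = \mu_\mp \ne \mu_\pm$ unless I have the pairing of $m$ with $\mu$ correct; this sign bookkeeping is the one place to be careful, and I would present it as a clean two-line verification $M_\lambda m_\pm = \mu_\pm m_\pm$ once the signs are aligned with the conventions in \eqref{eq:eigenM}--\eqref{eq:vecM}. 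Then $V=[m_+\ m_-]$ has $\det V = i\lambda^{-1/2}c - (-i\lambda^{-1/2}c) = 2i\lambda^{-1/2}c \ne 0$, so $V$ is invertible and $M_\lambda = V_\lambda D_\lambda V_\lambda^{-1}$; computing $V_\lambda^{-1}$ explicitly recovers the stated operator $V^{-1}$ after replacing $\lambda$ by $\Lambda_h$ and $c$ by $(I-\tau^2\Lambda_h/4)^{1/2}$. Reassembling over the eigenbasis of $\Lambda_h$ gives $M=VDV^{-1}$ on $V_h\times V_h$.

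Finally, for $\mu_+^*\mu_+=I$: since $\Lambda_h$ and $(I-\tau^2\Lambda_h/4)^{1/2}$ are self-adjoint with respect to the $L_2$-inner product and commute, $\mu_\pm = (I-\tau^2\Lambda_h/2) \pm i\tau\Lambda_h^{1/2}(I-\tau^2\Lambda_h/4)^{1/2}$ has adjoint $\mu_\pm^* = (I-\tau^2\Lambda_h/2) \mp i\tau\Lambda_h^{1/2}(I-\tau^2\Lambda_h/4)^{1/2} = \mu_\mp$, so $\mu_+^*\mu_+ = (I-\tau^2\Lambda_h/2)^2 + \tau^2\Lambda_h(I-\tau^2\Lambda_h/4) = I$ by the same determinant computation as above (now as an operator identity), and likewise $\mu_-^*\mu_- = I$. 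There is no real obstacle here; the only mildly delicate points are (a) justifying that $(I-\tau^2\Lambda_h/4)^{1/2}$ is a legitimate positive operator, which is exactly where the CFL condition \eqref{eq:CFL} and the spectral bound \eqref{eq:eigenes} enter, and (b) getting the $\pm$ signs in the eigenvector/eigenvalue pairing consistent with the statement; everything else is a short scalar calculation promoted to $\Lambda_h$ via functional calculus.
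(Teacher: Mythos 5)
Your proposal is correct and follows the same route as the paper, whose proof simply asserts that \eqref{eq:eigenM} and \eqref{eq:vecM} ``can be verified by direct computation'' and then checks $\mu_\pm^*=\mu_\mp$ and $\mu_-\mu_+=\mu_+\mu_-=I$ exactly as you do; you have merely carried out the direct computation scalar-by-scalar via the functional calculus for $\Lambda_h$, which is the intended argument. The one substantive thing your computation turns up is real: with the signs as literally written in \eqref{eq:eigenM}--\eqref{eq:vecM} one gets $M_\lambda m_\pm=\mu_\mp m_\pm$ (your second-component check is decisive, since $(M_\lambda m_\pm)_2=1-\tau^2\lambda/2\mp i\tau\lambda^{1/2}c=\mu_\mp$ while $(m_\pm)_2=1$), so either the columns of $V$ or the diagonal entries of $D$ must be swapped; this is a harmless labelling slip in the statement, as everything downstream uses only $D^*D=I$ and the explicit form of $V^{-1}$, but you should state the corrected pairing rather than leave it as ``once the signs are aligned.'' Your use of the CFL condition together with \eqref{eq:eigenes} to guarantee $1-\tau^2\lambda_{h,j}/4>0$, so that $(I-\tau^2\Lambda_h/4)^{\pm 1/2}$ are well defined, is exactly where that hypothesis is needed and is a point the paper leaves implicit.
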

\begin{proof}
  The expressions \eqref{eq:eigenM}  and \eqref{eq:vecM} can be verified by direct computation. We first note that $\mu_+^*$ is given by
\begin{equation*} 
\mu_+^* = I-\frac{\tau^2\Lambda_{h}}{2}- i\tau\Lambda_{h}^{1/2}(I-\tau^2\Lambda_{h}/4)^{1/2} = \mu_-,
\end{equation*}
since $\Lambda_h$ is a real symmetric operator.  Similarly,   $\mu_-^* = \mu_+$. Then it is not difficult to see that $\mu^*_+\mu_+= \mu_-\mu_+= I$ and $\mu_-^*\mu_- = \mu_+ \mu_- = I$.  
\end{proof}

\begin{lemma}
\label{lem:boundm}
Let $v=[v_1,v_2]^T\in {\cal H}_h^\alpha$, under the CFL condition \eqref{eq:CFL},
\begin{equation} 
 \label{eq:boundm}
\|Mv\|_{m,\alpha} = \|v\|_{m,\alpha}, \quad \alpha\in\mathbb{R}.
\end{equation}
\end{lemma}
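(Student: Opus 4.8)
The plan is to exploit the spectral decomposition $M = VDV^{-1}$ from Lemma \ref{lem:spectraldec} together with the fact that the inner product $\langle \cdot,\cdot\rangle_{m,\alpha}$ in \eqref{eq:innerm} is defined precisely through the same matrix $V$. Writing $w = V^{-1}v$, so that $v = Vw$ and $Mv = VDw$, we have by the very definition \eqref{eq:innerm}
\begin{equation*}
\|Mv\|_{m,\alpha}^2 = (V^{-1}Mv)^*\begin{bmatrix}\Lambda_h^{\alpha-1} & 0\\ 0 & \Lambda_h^{\alpha-1}\end{bmatrix}(V^{-1}Mv) = (Dw)^*\begin{bmatrix}\Lambda_h^{\alpha-1} & 0\\ 0 & \Lambda_h^{\alpha-1}\end{bmatrix}(Dw).
\end{equation*}
Since $D = \operatorname{diag}(\mu_+,\mu_-)$ is block diagonal and each block is a function of $\Lambda_h$ (hence commutes with $\Lambda_h^{\alpha-1}$), the right-hand side equals $w_1^*\mu_+^*\Lambda_h^{\alpha-1}\mu_+ w_1 + w_2^*\mu_-^*\Lambda_h^{\alpha-1}\mu_- w_2 = w_1^*\Lambda_h^{\alpha-1}\mu_+^*\mu_+ w_1 + w_2^*\Lambda_h^{\alpha-1}\mu_-^*\mu_- w_2$.

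Now I invoke the unitarity relations $\mu_+^*\mu_+ = I$ and $\mu_-^*\mu_- = I$ established at the end of Lemma \ref{lem:spectraldec}, which is exactly where the CFL condition \eqref{eq:CFL} enters: it guarantees that $I - \tau^2\Lambda_h/4$ is positive definite so that its square root is a well-defined self-adjoint operator and the computation $\mu_+^*\mu_+ = \mu_-\mu_+ = I$ is legitimate. Substituting these in collapses the expression to $w_1^*\Lambda_h^{\alpha-1}w_1 + w_2^*\Lambda_h^{\alpha-1}w_2 = (V^{-1}v)^*\begin{bmatrix}\Lambda_h^{\alpha-1} & 0\\ 0 & \Lambda_h^{\alpha-1}\end{bmatrix}(V^{-1}v) = \|v\|_{m,\alpha}^2$, which is the claim. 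Taking square roots finishes the proof.

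The only genuine subtlety — and the one I would be careful to spell out — is the commutativity step: one must check that $\mu_\pm$, being built from $\Lambda_h$, $\Lambda_h^{1/2}$ and $(I - \tau^2\Lambda_h/4)^{1/2}$ via functional calculus, commutes with $\Lambda_h^{\alpha-1}$, and that the conjugate transpose distributes as $(Dw)^* \Lambda_h^{\alpha-1} (Dw)$ splitting into the two diagonal contributions. Both are immediate since all operators involved are functions of the single self-adjoint operator $\Lambda_h$ and therefore lie in a commutative algebra; I would state this once rather than belabor it. No other obstacle arises — the result is essentially the observation that $M$ is unitary in the $V$-coordinates weighted by $\operatorname{diag}(\Lambda_h^{\alpha-1},\Lambda_h^{\alpha-1})$, which is built into the definitions by design.
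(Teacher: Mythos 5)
Your proposal is correct and follows essentially the same route as the paper: both use the spectral decomposition $M=VDV^{-1}$ from Lemma \ref{lem:spectraldec}, the fact that the $\|\cdot\|_{m,\alpha}$ inner product is defined through the same $V^{-1}$, the commutativity of $\mu_{\pm}$ with $\Lambda_h^{\alpha-1}$, and the relations $\mu_{\pm}^*\mu_{\pm}=I$ to collapse $D^*D$ to the identity. Your version merely makes the component-wise expansion and the role of the CFL condition slightly more explicit than the paper does.
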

\begin{proof}
Using the spectral decomposition of M in Lemma \ref{lem:spectraldec} and the definition of the $\|\cdot\|_{m,\alpha}$ norm in \eqref{eq:discretemnorm}, we have  $v=[v_1,v_2]^T\in {\cal H}_h^\alpha$
\begin{equation*}
\begin{split}
\| Mv\|_{m,\alpha}^2 &= \| VDV^{-1}v\|_{m,\alpha}^2 = (V^{-1}VDV^{-1}v)^*\begin{bmatrix}
\Lambda_h^{\alpha-1} & 0 \\
0 & \Lambda_h^{\alpha-1}  
\end{bmatrix} V^{-1}VDV^{-1}v\\
& =  (DV^{-1}v)^* \begin{bmatrix}
\Lambda_h^{\alpha-1} & 0 \\
0 & \Lambda_h^{\alpha-1}  
\end{bmatrix}DV^{-1}v
= (V^{-1}v)^*D^* \begin{bmatrix}
\Lambda_h^{\alpha-1} & 0 \\
0 & \Lambda_h^{\alpha-1}  
\end{bmatrix}DV^{-1}v\\
&= v^*(V^{-1})^*\begin{bmatrix}
\Lambda_h^{\alpha-1} & 0 \\
0 & \Lambda_h^{\alpha-1}  
\end{bmatrix}V^{-1}v=\|v\|_{m,\alpha}^2,
\end{split}
\end{equation*}
since $\mu_{\pm}$ and $\Lambda_h^{\alpha-1}$ commute; recall $D={\rm diag}(\mu_+,\mu_-)$, and  

\begin{equation*}
\begin{split}
D^*D & = \begin{bmatrix}
\mu_+^* & 0 \\
0 &\mu_-^*  
\end{bmatrix} \begin{bmatrix}
\mu_+ & 0 \\
0 &\mu_-
\end{bmatrix}= \begin{bmatrix}
\mu_+^*\mu_+& 0 \\
0 &\mu_-^*\mu_-
\end{bmatrix} 
=  \begin{bmatrix}
I & 0 \\
0 &I
\end{bmatrix}.
\end{split}
\end{equation*}
\end{proof}

Next, we prove that the  norms $\vertiii{\cdot}_{h, \alpha}$ and $\|\cdot\|_{m,\alpha}$ are equivalent.
\begin{lemma} 
\label{lem:normeq}
Under the CFL condition (\ref{eq:CFL}), there exist constants $C_1, C_2>0$, independent of the mesh size $h$ and the time step $\tau$, such that 
\begin{equation} \label{eq:normeq} 
C_1\vertiii{v}_{h, \alpha}^2 \leq \|v\|_{m ,\alpha}^2\leq C_2 \vertiii{v}_{h, \alpha}^2\quad \alpha \in\mathbb{R},\, v\in {\cal H}_h^\alpha.
\end{equation} 
\end{lemma}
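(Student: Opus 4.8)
The plan is to compute the norm $\|v\|_{m,\alpha}^2$ explicitly in terms of the spectral coefficients of $v_1$ and $v_2$ with respect to the eigenbasis $\{\phi_{h,j}\}$ of $\Lambda_h$, and then to compare the resulting quadratic form with the corresponding explicit expression for $\vertiii{v}_{h,\alpha}^2 = \|v_1\|_{h,\alpha}^2 + \|v_2\|_{h,\alpha-1}^2$. Writing $v_1 = \sum_j a_j \phi_{h,j}$ and $v_2 = \sum_j b_j \phi_{h,j}$, the $2\times 2$ block structure of $V^{-1}$ acts diagonally in $j$, so that $\|v\|_{m,\alpha}^2 = \sum_j \lambda_{h,j}^{\alpha-1} \left| (V_j^{-1} (a_j,b_j)^T) \right|^2$ where $V_j^{-1}$ is the scalar version of $V^{-1}$ with $\Lambda_h$ replaced by $\lambda_{h,j}$. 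Carrying out this computation, with $\kappa_j := \lambda_{h,j}^{1/2}(1 - \tau^2\lambda_{h,j}/4)^{-1/2}$, the two rows of $V_j^{-1}$ give $\tfrac12(i\kappa_j a_j + b_j)$ and $\tfrac12(-i\kappa_j a_j + b_j)$, whose squared moduli sum to $\tfrac12(\kappa_j^2 |a_j|^2 + |b_j|^2)$. Hence
\begin{equation*}
\|v\|_{m,\alpha}^2 = \tfrac12 \sum_j \lambda_{h,j}^{\alpha-1}\left( \kappa_j^2 |a_j|^2 + |b_j|^2\right) = \tfrac12 \sum_j \lambda_{h,j}^{\alpha}\left(1 - \tfrac{\tau^2\lambda_{h,j}}{4}\right)^{-1} |a_j|^2 + \tfrac12\sum_j \lambda_{h,j}^{\alpha-1}|b_j|^2.
\end{equation*}

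The comparison is now term-by-term. The $v_2$ part is already exactly $\tfrac12 \|v_2\|_{h,\alpha-1}^2$, so no work is needed there. For the $v_1$ part we must bound the factor $\left(1 - \tau^2\lambda_{h,j}/4\right)^{-1}$ above and below by constants independent of $h$ and $\tau$. The lower bound is trivial: since $\lambda_{h,j} \geq 0$ and $\tau^2\lambda_{h,j}/4 < 1$ under the CFL condition, we have $\left(1 - \tau^2\lambda_{h,j}/4\right)^{-1} \geq 1$. The upper bound is where the CFL condition is essential: by \eqref{eq:eigenes} we have $\lambda_{h,j} \leq C_s h^{-2}$, and by \eqref{eq:CFL} we have $\tau^2 < C_{\rm CFL}^2 h^2$ with $C_{\rm CFL}^2 < 4/C_s$; therefore $\tau^2 \lambda_{h,j}/4 < C_{\rm CFL}^2 C_s / 4 =: \delta < 1$, giving $\left(1 - \tau^2\lambda_{h,j}/4\right)^{-1} \leq (1-\delta)^{-1}$. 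Combining, $\tfrac12 \|v_1\|_{h,\alpha}^2 \leq \tfrac12 \sum_j \lambda_{h,j}^{\alpha}(1-\tau^2\lambda_{h,j}/4)^{-1}|a_j|^2 \leq \tfrac{1}{2(1-\delta)}\|v_1\|_{h,\alpha}^2$, and adding the $v_2$ contribution yields \eqref{eq:normeq} with $C_1 = \tfrac12$ and $C_2 = \tfrac{1}{2(1-\delta)} = \tfrac12\left(1 - C_{\rm CFL}^2 C_s/4\right)^{-1}$, both manifestly independent of $h$ and $\tau$.

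The main obstacle is bookkeeping rather than conceptual: one must be careful that $V^{-1}$ as written in \eqref{eq:innerm} is indeed the inverse of the $V$ in Lemma \ref{lem:spectraldec} (so that the spectral decomposition is consistent) and that the operator $(I - \tfrac{\tau^2}{4}\Lambda_h)^{-1/2}$ appearing there is well-defined, which again is exactly guaranteed by the CFL condition via $\tau^2\lambda_{h,j}/4 < \delta < 1$. It is worth remarking explicitly in the proof that all operators involved ($\Lambda_h^{\alpha-1}$, $\mu_\pm$, and the entries of $V^{-1}$) are simultaneously diagonalized in the eigenbasis of $\Lambda_h$, since this is what reduces the whole matrix computation to the scalar per-mode identity above; once that reduction is made the estimate is immediate. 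I would therefore structure the proof as: (i) diagonalize and reduce to modes; (ii) evaluate $\|v\|_{m,\alpha}^2$ per mode; (iii) invoke \eqref{eq:eigenes} and \eqref{eq:CFL} to pin the mode-wise constants; (iv) sum.
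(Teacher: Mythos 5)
Your proof is correct and follows essentially the same route as the paper's: both reduce $\|v\|_{m,\alpha}^2$ to the identity $\tfrac12\|\Lambda_h^{\alpha/2}(I-\tfrac{\tau^2}{4}\Lambda_h)^{-1/2}v_1\|^2+\tfrac12\|\Lambda_h^{(\alpha-1)/2}v_2\|^2$ and then bound the factor coming from $(I-\tfrac{\tau^2}{4}\Lambda_h)^{\mp 1/2}$ uniformly using the CFL condition together with the eigenvalue bound \eqref{eq:eigenes}. Your mode-by-mode version is in fact slightly more explicit than the paper's operator-norm phrasing, producing the concrete constants $C_1=\tfrac12$ and $C_2=\tfrac12\left(1-C_{\rm CFL}^2C_s/4\right)^{-1}$.
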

\begin{proof} 
By the definition of the norm  $\| \cdot\|_{m,\alpha}$  in (\ref{eq:discretemnorm}) we obtain
\begin{equation*}
\|v\|_{m, \alpha}^2
= \frac{1}{2}\|\Lambda_h^{\alpha/2}(I-\frac{\tau^2}{4}\Lambda_h)^{-1/2}v_1 \|^2 + \frac{1}{2}\|\Lambda_h^{(\alpha-1)/2}v_2 \|^2
\end{equation*}
Thus, we get under the CFL condition (\ref{eq:CFL})
\begin{equation} \label{eq:Mnorm}
\begin{aligned}
\|v\|_{m, \alpha}^2
&\leq \frac{1}{2} \| (I-\frac{\tau^2}{4}\Lambda_h)^{-1/2}\|^2_{{\cal L}(V_h)} \|\Lambda_h^{\alpha/2}v_1 \|^2 +\frac{1}{2}\|\Lambda_h^{(\alpha-1)/2}v_2 \|^2 \\
&\leq \max\{ C/2,1/2\} \vertiii{v}_{h,\alpha}^2= C_2 \vertiii{v}_{h,\alpha}^2.
\end{aligned}
\end{equation}
We also have under the CFL condition (\ref{eq:CFL})
\begin{equation} \label{eq:sobolevnorm}
\begin{aligned}
\vertiii{v}^2_{h, \alpha} &= \| \Lambda_h^{\alpha/2}v_1\|^2 + \| \Lambda_h^{(\alpha-1)/2}v_2\|^2\\
& \leq \|( I-\frac{\tau^2}{4}\Lambda_h)^{1/2}\| ^2_{{\cal L}(V_h})\| \Lambda_h^{\alpha/2}(I-\frac{\tau^2}{4}\Lambda_h)^{-1/2}v_1\|^2 + \| \Lambda_h^{(\alpha-1)/2}v_2\|^2\\
&\leq C \| \Lambda_h^{\alpha/2}(I-\frac{\tau^2}{4}\Lambda_h)^{-1/2}v_1\|^2 + \| \Lambda_h^{(\alpha-1)/2}v_2\|^2 \leq C_1 \|v\|_{m,\alpha}^2.
\end{aligned}
\end{equation}
Estimates (\ref{eq:Mnorm})  and  (\ref{eq:sobolevnorm}) complete the proof of (\ref{eq:normeq}).
\end{proof}

We also want to analyse the stability SVM applied to the linear analogue of \eqref{wave}. The linear stochastic wave equation is given by 
\begin{equation} \label{eq:linearsto}
{\rm d}\dot{u} = -\Lambda u {\rm d}t + {\rm d}W,
\end{equation}
with initial conditions as in \eqref{wave}, i.e., $u(\cdot,0)=u_0, \,\dot{u}(\cdot,0)= v_0$. Following notation from Section \ref{sec:dgmethod}, the dG approximation to the linear stochastic wave equation is given with \eqref{eq:dgsde} for $f\equiv 0$, i.e.,  find $U_h(t)=[u_{h,1}, u_{h,2}]^T\in V_h \times V_h$ such that 
\begin{equation*} 
\begin{aligned}
&{\rm d}U_h(t) = A_h U_h(t){\rm d}t + BP_hdW,\quad t\in(0,T),\\
&U_h(\cdot,0)= U_{h, 0}.
\end{aligned}
\end{equation*}
Let $Y^n=[Y_1^n, Y_2^n]^T$ be the temporal approximation to  the above problem. Then $Y^n$ is given by \eqref{eq:svnsteps2} for $f=0$, i.e.,
\begin{equation} \label{eq:linearprob}
Y^{n} = M^{n}Y^0 + \sum\limits_{j=1}^{n}M^{n-j}DP_h\Delta W^{j},
\end{equation}
where $Y^0=X^0=U_{h,0}$.

\begin{rem} 
Throughout this section, the constant $C$ denotes a generic positive constant that may vary from line to line and is  independent of $h$ and  $\tau$.
\end{rem}

In order to  prove the stability of the temporal approximation of the linear and nonlinear problem, we need  the following relation between $\Lambda_h$ and $\Lambda$ for a constant $C$, see proof of Theorem 4.4 in \cite{kovacs2012},
\begin{equation} \label{eq:lambdahlambda}
\| \Lambda_h^{-\delta} P_h \Lambda^\delta\|_{{\cal L}(L_2({\cal D}))} \leq C,\quad \delta \in [0,1/2].
\end{equation}
\begin{lemma} 
\label{lem:stabilitystoch}
Assume that $\| Y^0 \|_{L_2({\Omega, {\cal H}_h^\beta})}<\infty$ and that $\| X^0 \|_{L_2({\Omega, {\cal H}_h^\beta})}<\infty$. Let $W(t)$ satisfy Assumption \ref{assum:noise} for some $\beta\geq 0$ and let $f$ satisfy \eqref{eq:assumf}, then, under the CFL condition \eqref{eq:CFL}, there exists a constant $C$, independent of $h$ and  $\tau$, such that 
\begin{equation} \label{eq:stablinear}
\| Y^{n} \|_{L_2({\Omega, {\cal H}_h^\beta})}\leq C\left(\| Y^0 \|_{L_2({\Omega, {\cal H}_h^\beta})}+ t_n^{1/2}\|\Lambda^{(\beta-1)/2}Q^{1/2}\|_{\rm HS}\right)
\end{equation}
and
\begin{equation} \label{eq:stochsvstability}
\| X^{n} \|_{L_2({\Omega, {\cal H}_h^\beta})}\leq C\exp(Ct_n)\left(\| X^0 \|_{L_2({\Omega, {\cal H}_h^\beta})}+ t_n^{1/2} \|\Lambda^{(\beta-1)/2}Q^{1/2}\|_{\rm HS} + t_n\right).
\end{equation}
\end{lemma}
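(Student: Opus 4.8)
The plan is to treat the two estimates separately, obtaining \eqref{eq:stablinear} first and then bootstrapping to \eqref{eq:stochsvstability} via a discrete Gronwall argument. For \eqref{eq:stablinear}, I would start from the explicit representation \eqref{eq:linearprob}, apply the norm equivalence from Lemma \ref{lem:normeq} to pass from $\vertiii{\cdot}_{h,\beta}$ to $\|\cdot\|_{m,\beta}$, and then use Lemma \ref{lem:boundm} to kill all the powers of $M$: $\|M^{n-j}w\|_{m,\beta} = \|w\|_{m,\beta}$. Thus the deterministic term contributes $\|Y^0\|_{m,\beta} \le C\vertiii{Y^0}_{h,\beta}$, and the stochastic sum, after taking $L_2(\Omega)$-norms, becomes $\sum_{j=1}^n \mathbb{E}\|M^{n-j}DP_h\Delta W^j\|_{m,\beta}^2$ — here I would use that the increments $\Delta W^j$ are independent with mean zero so the cross terms vanish (this is the discrete It\^o isometry), then remove $M^{n-j}$ by Lemma \ref{lem:boundm}. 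It remains to bound $\mathbb{E}\|DP_h\Delta W^j\|_{m,\beta}^2$. Unwinding the definition \eqref{eq:discretemnorm} of the $m$-norm of $D P_h \Delta W^j = [\frac{\tau}{2}P_h\Delta W^j,\ P_h\Delta W^j]^T$, the first component carries a factor $\tau^2/4$ together with $(I-\frac{\tau^2}{4}\Lambda_h)^{-1}$ which is bounded under the CFL condition, and is therefore negligible relative to the second component, which gives $\frac12\|\Lambda_h^{(\beta-1)/2}P_h\Delta W^j\|^2$. Using \eqref{eq:lambdahlambda} with $\delta=(\beta-1)/2 \in [0,1/2]$ (and its obvious extension/duality for the range needed, or simply Assumption \ref{assum:noise} directly after commuting $P_h$) this is controlled by $\tau\|\Lambda^{(\beta-1)/2}Q^{1/2}\|_{\rm HS}^2$ via \eqref{eq:itosisometry}/\eqref{eq:noisecond}. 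Summing over $j=1,\dots,n$ produces $t_n\|\Lambda^{(\beta-1)/2}Q^{1/2}\|_{\rm HS}^2$, and taking square roots yields \eqref{eq:stablinear}.

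For \eqref{eq:stochsvstability}, I would proceed analogously from \eqref{eq:svnsteps2}, which has the extra nonlinear term $\sum_{j=1}^n M^{n-j}\tau DP_hf(X_1^{j-1/2})$. Applying the triangle inequality in $L_2(\Omega,{\cal H}_h^\beta)$, the $Y^0$ and stochastic parts are bounded exactly as above, contributing $C(\|X^0\|_{L_2(\Omega,{\cal H}_h^\beta)} + t_n^{1/2}\|\Lambda^{(\beta-1)/2}Q^{1/2}\|_{\rm HS})$. For the nonlinear sum I again use Lemma \ref{lem:normeq} and Lemma \ref{lem:boundm} to reduce each summand to $\tau\|DP_hf(X_1^{j-1/2})\|_{m,\beta}$; unwinding the $m$-norm as before, the dominant contribution is $\tau\|\Lambda_h^{(\beta-1)/2}P_hf(X_1^{j-1/2})\|$, which by \eqref{eq:lambdahlambda} and the growth bound in \eqref{eq:assumf} is bounded by $C\tau(1 + \|X_1^{j-1/2}\|_{h,\beta-1}) \le C\tau(1 + \vertiii{X^{j-1/2}}_{h,\beta})$. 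I would then relate $X^{j-1/2}$ to $X^{j-1}$ (from the first line of \eqref{eq:stochasticsv}, $X_1^{j-1/2} = X_1^{j-1} + \frac{\tau}{2}X_2^{j-1}$, so $\|X_1^{j-1/2}\|_{h,\beta-1} \le \|X_1^{j-1}\|_{h,\beta-1} + \frac{\tau}{2}\|X_2^{j-1}\|_{h,\beta-1}$, and the latter is controlled by $\vertiii{X^{j-1}}_{h,\beta}$ up to constants, using the inverse estimate \eqref{eq:inverse} to absorb the shift in Sobolev index if $\beta>1$, or directly when $\beta\le 1$). This gives
\begin{equation*}
\|X^n\|_{L_2(\Omega,{\cal H}_h^\beta)} \le C\Big(\|X^0\|_{L_2(\Omega,{\cal H}_h^\beta)} + t_n^{1/2}\|\Lambda^{(\beta-1)/2}Q^{1/2}\|_{\rm HS} + t_n\Big) + C\tau\sum_{j=1}^{n}\|X^{j-1}\|_{L_2(\Omega,{\cal H}_h^\beta)},
\end{equation*}
and the discrete Gronwall inequality then produces the factor $C\exp(Ct_n)$ in front, yielding \eqref{eq:stochsvstability}.

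The main obstacle I anticipate is the careful handling of the $(I-\frac{\tau^2}{4}\Lambda_h)^{-1/2}$ factor hidden in the $m$-norm and making sure the CFL condition \eqref{eq:CFL} is exactly what is needed to keep $\|(I-\frac{\tau^2}{4}\Lambda_h)^{-1/2}\|_{{\cal L}(V_h)}$ bounded (this uses $\lambda_{h,j}\le C_sh^{-2}$ from \eqref{eq:eigenes} together with $\tau < C_{\rm CFL}h$ and $C_{\rm CFL} < 2/\sqrt{C_s}$, so that $\frac{\tau^2}{4}\lambda_{h,j} < \frac{C_{\rm CFL}^2 C_s}{4} < 1$ uniformly). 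A secondary technical point is justifying the discrete It\^o isometry step — that the $\{\Delta W^j\}_j$ are independent, ${\cal F}_{t_{j-1}}$-measurable increments so that the expectation of the squared norm of the sum splits into the sum of expectations — and ensuring the operator $DP_h$ applied to $\Delta W^j$ is handled inside the $m$-norm without introducing spurious $h$- or $\tau$-dependence; both follow from the commuting of $\Lambda_h$ with $\mu_\pm$ established in Lemma \ref{lem:boundm} and the boundedness of $P_h$. Everything else is a routine assembly of triangle inequalities, It\^o isometry, the Lipschitz/growth bounds \eqref{eq:assumf}, and discrete Gronwall.
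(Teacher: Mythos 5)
Your proposal is correct and follows essentially the same route as the paper: Lemma \ref{lem:boundm} plus the norm equivalence of Lemma \ref{lem:normeq} to remove the powers of $M$, It\^o's isometry for the stochastic sum, the growth bound in \eqref{eq:assumf} with the inverse estimate \eqref{eq:inverse} and the CFL condition to control $X_1^{j-1/2}$ by $\vertiii{X^{j-1}}_{h,\beta}$, and discrete Gronwall. The only point to tighten is the treatment of the first component of $DP_h\Delta W^j$: its being ``negligible'' is not just the boundedness of $(I-\frac{\tau^2}{4}\Lambda_h)^{-1/2}$ but also requires absorbing the extra half power of $\Lambda_h$ (from $\|\cdot\|_{h,\beta}$ versus $\|\cdot\|_{h,\beta-1}$) via $\tau^2\|\Lambda_h^{1/2}\cdot\|^2\leq \tau^2 C_s h^{-2}\|\cdot\|^2\leq C\|\cdot\|^2$, which is exactly how the paper argues and which you do invoke elsewhere.
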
 
\begin{proof} 
 Writing the increments $\Delta W^n$ as $\Delta W^n=\int_{t_{n-1}}^{t_{n}}{\rm d}W(s)$, we have for \eqref{eq:linearprob} by using  It\^{o}'s isometry \eqref{eq:itosisometry} and Lemma \ref{lem:boundm}
\begin{equation*}
\begin{split}    
\expect{\norm{Y^n}^2} &= \expect{ \norm{M^n Y^0}^2}+ \expect{\norm{\sum_{j = 1}^n\int_{t_{j-1}}^{t_{j}} M^{n-j} D P_h dW(s)}^2} \\
&= \expect{ \norm{Y^0}^2}+ \sum_{j = 1}^n(t_{j}-t_{j-1})\sum_{k = 0}^\infty\norm{D P_hQ^{1/2}e_k}^2\\
&= \expect{ \norm{Y^0}^2}+ t_n\sum_{k = 0}^\infty\norm{D P_hQ^{1/2}e_k}^2.
\end{split}
\end{equation*}
By the norm equivalence \eqref{eq:normeq}, it follows that
\begin{equation*}
\begin{split}    
\expect{\vertiii{Y^n}_{h,\beta}^2} \leq \frac{C_2}{C_1}\left( \expect{ \vertiii{X^0}_{h,0}^2}+ t_n\sum_{k = 0}^\infty\vertiii{D P_hQ^{1/2}e_k}_{h,\beta}^2\right).
\end{split}
\end{equation*}
Letting  $I=\sum_{k = 0}^\infty\vertiii{D P_hQ^{1/2}e_k}_{h,\beta}^2 $, employing the definition of the $\vertiii{\cdot}_{h,\beta}$ norm, and by the inverse estimate (\ref{eq:inverse}) we obtain 
\begin{equation*} 
\begin{aligned}
I\leq \frac{\tau^2C_s}{4h^2}\| \Lambda_h^{(\beta-1)/2}P_hQ^{1/2}\|_{\rm HS}^2+\| \Lambda_h^{(\beta-1)/2}P_hQ^{1/2}\|_{\rm HS}^2.
\end{aligned} 
\end{equation*}
Using the CFL condition (\ref{eq:CFL}) and \eqref{eq:lambdahlambda} for $\beta/2\in[-1,2]$, we finally get 
\begin{equation*}
 I\leq 2\| \Lambda^{(\beta-1)/2}Q^{1/2}\|_{\rm HS}^2.
\end{equation*}
The above estimate completes the proof of bound \eqref{eq:stablinear}.

 To prove estimate \eqref{eq:stochsvstability}, we use \eqref{eq:svnsteps2} and \eqref{eq:boundm} to get
\begin{equation*} 
\begin{aligned}
\mathbb{E}[ \|X^{n}\|_{m,\beta}^2]\leq &\,  3 \mathbb{E}[ \| X^0\|_{m,\beta}^2]  + 3\expect{\norm{ \sum\limits_{j=1}^n\int_{t_{j-1}}^{t_j}D P_h dW(s)}^2}\\
&\,+ 3n\tau^2\sum\limits_{j=1}^{n}\mathbb{E}\left[ \| DP_hf(X^{j-1/2}_1) \|_{m,\beta}^2\right].
\end{aligned}
\end{equation*}
Using \eqref{eq:normeq} and noting that the second term is bounded as above, we obtain
\begin{equation} \label{eq:xneq}
\begin{split}
\mathbb{E}\big[ \vertiii{X^{n}}_{h,\beta}^2\big] \leq C & \,\left( \mathbb{E}\left[ \vertiii{X^0}_{h,\beta}^2\right] +t_n\| \Lambda^{(\beta-1)/2}Q^{1/2}\|_{\rm HS}^2\right.\\
&\left. \quad+ n\tau^2 \sum\limits_{j=1}^{n} \mathbb{E}\left[ \vertiii{  DP_hf(X^{j-1/2}_1)}_{h,\beta}^2\right]\right).
\end{split}
\end{equation} 

Letting $II=n\tau ^2\sum\limits_{j=1}^{n} \mathbb{E}\left[ \vertiii{  DP_hf(X^{j-1/2}_1)}_{h,\beta}^2\right]$, we have by the definition of the norm $\vertiii{\cdot}_{h,\beta}$ and by  noting that  $\|\Lambda_h^{-1/2}\|_{{\cal L}(L_2({\cal D}))}^2\leq C$ , 
\begin{equation*} 
\begin{split}
II &= n\tau^2  \sum_{j=1}^{n}\mathbb{E}\left[\frac{\tau^2}{4}\| P_hf(X^{j-1/2}_1)\|_{h,\beta}^2+ \| P_hf(X^{j-1/2}_1)\|_{h,\beta-1}^2 \right]\\
&\leq(C+\frac{\tau^2}{4}) t_n \tau  \sum_{j=1}^{n} \mathbb{E}\left[\| P_hf(X^{j-1/2}_1)\|_{h,\beta}^2 \right]. 
\end{split}
\end{equation*}
Using (\ref{eq:assumf}) and  triangle inequality, we obtain 
\begin{equation*} 
\begin{split}
I I&\leq (C+\frac{\tau^2}{4}) t_n\tau  \sum_{j=1}^{n}\mathbb{E}\left[ 1 + \| X^{j-1/2}_1\|_{h,\beta}^2 \right] \\
&\leq(C+\frac{\tau^2}{4})t_n\tau  \sum_{j=1}^{n} \left( 1+2\mathbb{E}\left[\| X^{j-1}_1\|_{h,\beta}^2+ \frac{\tau^2}{4}\|X_2^{j-1}\|_{h,\beta}^2 \right]\right).
\end{split}
\end{equation*}
By the inverse estimate (\ref{eq:inverse})  and  the CFL condition (\ref{eq:CFL}), we deduce for $II$
\begin{equation*}
\begin{split}
II&\leq  Ct_n\tau  \sum_{j=1}^{n}\left( 1+ 2\mathbb{E}\left[\| X^{j-1}_1\|_{h,\beta}^2+ \frac{\tau^2 C_s}{4h^2}\|X_2^{j-1}\|_{h,\beta -1}^2 \right]\right)\\
&\leq Ct_n\tau  \sum_{j=1}^{n}\left( 1+2 \mathbb{E}\left[\| X^{j-1}_1\|_{h,\beta}^2+ \|X_2^{j-1}\|_{h,\beta-1 }^2 \right]\right)\\ 
& = Ct_n\tau  \sum_{j=1}^{n} \left( 1 +2\mathbb{E}\left[\vertiii{ X^{j-1}}_{h,\beta}^2 \right]\right).
\end{split}
\end{equation*}

Using the above estimate, we obtain for (\ref{eq:xneq})
\begin{equation*} 
\begin{split}
\mathbb{E}\big[ \vertiii{X^{n}}_{h,\beta}^2\big] \leq C&\left(  \mathbb{E}\big[ \vertiii{X^0}_{h,\beta}^2\big]+ t_n\|\Lambda^{(\beta-1)/2}Q^{1/2}\|_{\rm HS}^2\right.\\
& \left. \quad + t_n^2+ 2t_n\tau  \sum_{j=1}^{n} \left(\mathbb{E}\left[\vertiii{ X^{j-1}}_{h,\beta}^2 \right]\right)\right).
\end{split}
\end{equation*}
The discrete version of Gronwall’s inequality applied to the above inequality and taking square roots, gives
\begin{equation*} 
\begin{split}
\| X^n \|_{L_2({\Omega, {\cal H}_h^\beta})}\leq C\exp(Ct_n)\left(  \| X^0 \|_{L_2({\Omega, {\cal H}_h^\beta})}+ t_n^{1/2}\|\Lambda^{(\beta-1)/2}Q^{1/2}\|_{\rm HS} + t_n\right).
\end{split}
\end{equation*}
The above bound completes the proof of \eqref{eq:stochsvstability}.
\end{proof}

\subsection{Strong convergence analysis}
In this subsection, we derive strong error estimates for the full discretization of \eqref{wave}. 
Before we analyze the strong convergence of the temporal discretization (\ref{eq:svnsteps2}), we present  H\"{o}lder continuity of the  semi-discrete mild solution (\ref{eq:discretemild}). 
\begin{lemma} 
Let $U_h=[u_{h,1}, u_{h,2}]^T$ be the solution to the dG semi-discrete formulation (\ref{discdgwave}) given by (\ref{eq:discretemild}). Also let all conditions in Lemma \ref{lem:disc_exist} be fulfilled, then 
\begin{equation} \label{eq:holdersol}
\begin{split}
\mathbb{E}[ \| u_{h,1}(t)-u_{h,1}(s)\|_{h,0}^2] \leq C |t-s|^{2\min(\beta,1)},
\end{split}
\end{equation}
where the constant $C>0$ depends on $T, \| U_{h,0} \|_{L_2({\Omega, {\cal H}_h^\beta})}$,  and $\|\Lambda^{(\beta-1)/2}Q^{1/2}\|_{\rm HS}$, and is independent of $h$ and $\tau$.
\end{lemma}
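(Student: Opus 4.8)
The plan is to estimate the increment $u_{h,1}(t)-u_{h,1}(s)$ (for $0\le s\le t\le T$) directly from the mild-solution representation \eqref{eq:discretemild}, splitting it into the three contributions coming from the initial data, the nonlinear drift, and the stochastic convolution. Writing
\begin{equation*}
u_{h,1}(t)-u_{h,1}(s) = \big[(E_h(t)-E_h(s))U_{h,0}\big]_1 + \Big[\int_0^t E_h(t-r)P_hF(U_h(r))\,{\rm d}r - \int_0^s E_h(s-r)P_hF(U_h(r))\,{\rm d}r\Big]_1 + \Big[\int_0^t E_h(t-r)BP_h\,{\rm d}W(r) - \int_0^s E_h(s-r)BP_h\,{\rm d}W(r)\Big]_1,
\end{equation*}
where $[\cdot]_1$ denotes the first component, I would bound the $L_2(\Omega,\dot H_h^0)$-norm of each piece separately and then combine by the triangle inequality. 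For all three I will use the explicit form of $E_h$ in \eqref{semigroupdiscrete}, the fact that $C_h(t),S_h(t)$ are bounded (by $1$) on $V_h$ in every $\|\cdot\|_{h,\alpha}$ norm, and the elementary trigonometric bounds $\|C_h(t)-C_h(s)\|_{{\cal L}(V_h)}\le \|(t-s)\Lambda_h^{1/2}\|$-type estimates, i.e. $\|(C_h(t)-C_h(s))w\|\le |t-s|\,\|w\|_{h,1}$ and $\|(S_h(t)-S_h(s))w\|\le |t-s|\,\|w\|_{h,1}$, together with the trivial bound $\le 2\|w\|_{h,0}$; interpolating these two gives the exponent $\min(\beta,1)$.

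For the initial-data term, $(E_h(t)-E_h(s))U_{h,0}$ has first component $(C_h(t)-C_h(s))u_{h,0} + \Lambda_h^{-1/2}(S_h(t)-S_h(s))v_{h,0}$; applying the interpolated bounds with the $\min$ trick yields $\le C|t-s|^{\min(\beta,1)}\vertiii{U_{h,0}}_{h,\beta}$, which is controlled by $\|U_{h,0}\|_{L_2(\Omega,{\cal H}_h^\beta)}$. For the drift term I would further split it as $\int_s^t E_h(t-r)P_hF(U_h(r))\,{\rm d}r$ plus $\int_0^s (E_h(t-r)-E_h(s-r))P_hF(U_h(r))\,{\rm d}r$; the first is bounded by $|t-s|\sup_r\|f(u_{h,1}(r))\|$ via the linear-growth bound \eqref{eq:assumf} and the semi-discrete stability bound \eqref{eq:mildstab}, and the second by $\int_0^s |t-s|\,\|P_hF(U_h(r))\|_{h,1}\,{\rm d}r$ or, using the $\min$ trick again, by $C|t-s|^{\min(\beta,1)}$ after noting $\|P_hf(u_{h,1})\|_{h,0}\le C(1+\|u_{h,1}\|)$ and, where the $h,1$ norm would appear, reducing to $h,0$ by the inverse estimate is \emph{not} allowed since that loses $h$-uniformity — so instead I keep $|t-s|^1$ only up to the cap $|t-s|^{\min(\beta,1)}$ by interpolating against the crude $L^2$ bound, exactly as for the initial data. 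For the stochastic term, using It\^o's isometry \eqref{eq:itosisometry} I write the squared $L_2(\Omega)$-norm as $\int_0^s \|(\text{diff of propagators})\,BP_hQ^{1/2}\|_{\rm HS}^2\,{\rm d}r + \int_s^t \|\Lambda_h^{-1/2}S_h(t-r)P_hQ^{1/2}\|_{\rm HS}^2\,{\rm d}r$; the second integrand is $\le \|\Lambda_h^{-1/2}P_h\Lambda^{-1/2}\|^2_{{\cal L}(L_2)}\,\|\Lambda_h^{(\beta-1)/2}P_h\Lambda^{(1-\beta)/2}\|^2\cdots$ — more cleanly, bound it by $C\|\Lambda_h^{(\beta-1)/2}P_hQ^{1/2}\|_{\rm HS}^2$ using $\|\Lambda_h^{-\beta/2}\|\le 1$ when $\beta\ge 1$ and a direct estimate when $0\le\beta<1$ — giving a factor $|t-s|$; the first integral uses $\|(C_h(t-r)-C_h(s-r))w\|\le \min(2\|w\|, |t-s|\|w\|_{h,1})$ inside the Hilbert–Schmidt sum, producing $|t-s|^{2\min(\beta,1)}\|\Lambda^{(\beta-1)/2}Q^{1/2}\|_{\rm HS}^2$ after invoking \eqref{eq:lambdahlambda}. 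Collecting the three estimates gives \eqref{eq:holdersol}.

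The main obstacle is the bookkeeping of norm indices in the stochastic and drift terms so that every constant stays independent of $h$: one must consistently trade a $\Lambda_h^{1/2}$ (or $\tau$, but $\tau$ does not enter here) against the time increment without ever invoking the inverse estimate \eqref{eq:inverse} (which would introduce $h^{-1}$), and instead interpolate between the "full-regularity" bound costing one power of $\Lambda_h^{1/2}$ and the "zero-regularity" bound costing nothing — this interpolation is what caps the H\"older exponent at $\min(\beta,1)$ and is the only delicate point; the passage from $\Lambda_h$-norms of $P_hQ^{1/2}$ to $\Lambda$-norms of $Q^{1/2}$ is handled uniformly by \eqref{eq:lambdahlambda} exactly as in the proof of Lemma \ref{lem:stabilitystoch}.
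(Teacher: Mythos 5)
Your proof is correct and is essentially the argument the paper relies on: the paper's own ``proof'' simply cites \cite[Proposition~3]{anton2015} and notes that it carries over verbatim with $\Lambda$ replaced by $\Lambda_h$, and that reference proves H\"older continuity by exactly your decomposition of the increment of \eqref{eq:discretemild} into initial-data, drift, and stochastic-convolution parts, with the interpolation $\min(2,|t-s|\lambda^{1/2})\le C|t-s|^{\min(\beta,1)}\lambda^{\min(\beta,1)/2}$ producing the exponent and It\^o's isometry plus \eqref{eq:lambdahlambda} handling the noise term. (Only cosmetic remark: the first component of the stochastic convolution involves $\Lambda_h^{-1/2}S_h$ rather than $C_h$, but the same $\min$-interpolation applies.)
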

\begin{proof}
  In \cite[Propositon 3]{anton2015}  H\"{o}lder continuity of the finite element approximation to the stochastic wave equation \eqref{wave} is proved. 
The proof for the dG approximation follows in the same way using that the discrete operator $\Lambda_h$ defines the discrete norm $\|v\|_{h,\beta}=\|\Lambda_h^{\beta/2}v\|$.
\end{proof}

We also need to  derive a bound for the difference between the semigroup $E_h$ (\ref{semigroupdiscrete}) and our time integrator M (\ref{eq:matrixM}). 

\begin{lemma} 
\label{lem:EminusM}
Let  $\alpha=0,1$.  The following estimate holds for the error between the  semigroup $E_h$ (\ref{semigroupdiscrete}) and the time integrator M (\ref{eq:matrixM}):
\begin{equation}\label{eq:EminusM}
\vertiii{(E_h-M)V}_{h,\alpha}\leq \tau^{3} \vertiii{V}_{h, \alpha+3},\qquad V=[v_1, v_2]^T\in {\cal H}_h^{\alpha+3}.
\end{equation}
\end{lemma}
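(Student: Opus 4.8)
The plan is to diagonalise both operators simultaneously using the spectral decomposition of $\Lambda_h$, thereby reducing the matrix estimate \eqref{eq:EminusM} to a collection of scalar estimates indexed by the eigenvalues $\lambda_{h,j}$. Writing $v_i = \sum_j v_i^{(j)}\phi_{h,j}$, both $E_h(\tau)$ and $M(\tau)$ act blockwise on each eigenspace: on the $j$-th component, $E_h(\tau)$ becomes the $2\times 2$ matrix with entries built from $\cos(\tau\sqrt{\lambda_{h,j}})$ and $\sin(\tau\sqrt{\lambda_{h,j}})$, while $M(\tau)$ becomes the matrix obtained from \eqref{eq:matrixM} by replacing $\Lambda_h$ with the scalar $\lambda_{h,j}$. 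Since the $\vertiii{\cdot}_{h,\alpha}$ norm weights the first component by $\lambda_{h,j}^{\alpha}$ and the second by $\lambda_{h,j}^{\alpha-1}$, it suffices to show that the $j$-th scalar block of $E_h(\tau)-M(\tau)$, call it $G_j(\tau)$, satisfies
\begin{equation*}
\lambda_{h,j}^{\alpha}\,\bigl|G_j(\tau)\,(w_1,w_2)^T\bigr|_1^2 + \lambda_{h,j}^{\alpha-1}\,\bigl|G_j(\tau)\,(w_1,w_2)^T\bigr|_2^2 \leq \tau^{6}\Bigl(\lambda_{h,j}^{\alpha+3} w_1^2 + \lambda_{h,j}^{\alpha+2}w_2^2\Bigr),
\end{equation*}
uniformly in $j$, where the subscripts pick out the two scalar components.

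First I would write $\lambda = \lambda_{h,j}$ and $x = \tau\sqrt{\lambda}$, noting that under the CFL condition \eqref{eq:CFL} together with the eigenvalue bound \eqref{eq:eigenes} we have $x = \tau\sqrt{\lambda_{h,j}} \le C_{\mathrm{CFL}}h\cdot\sqrt{C_s}h^{-1} < 2$, so $x$ ranges over a bounded interval. The entries of $G_j(\tau)$ are then, up to factors of $\sqrt\lambda$, differences of the form $\cos x - (1 - x^2/2)$, $\sin x - (x - x^3/4)$ and similar expressions coming from the $(2,1)$ and $(2,2)$ blocks. The heart of the matter is Taylor expansion: $\cos x - 1 + x^2/2 = O(x^4)$ and the $M$-entry $x - x^3/4$ agrees with $\sin x = x - x^3/6 + O(x^5)$ only to order $x$, so $\sin x - (x - x^3/4) = (1/4 - 1/6)x^3 + O(x^5) = O(x^3)$. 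Collecting the worst term, the $(1,2)$ and $(2,1)$ entries of $G_j$ behave like $\lambda^{-1/2}\cdot O(x^3)$ and $\lambda^{1/2}\cdot O(x^3)$ respectively, i.e. like $\tau^{3}\lambda$ and $\tau^{3}\lambda^{2}$ after substituting $x = \tau\sqrt\lambda$; the diagonal entries are $O(x^4)$, hence even smaller. Feeding these bounds into the weighted norm and matching powers of $\lambda$ against the right-hand side $\tau^6(\lambda^{\alpha+3}w_1^2 + \lambda^{\alpha+2}w_2^2)$ gives the claim for $\alpha = 0, 1$; one should track carefully that the slowest-decaying contribution is exactly the one that produces the $\tau^3$ (equivalently $\tau^6$ after squaring) rate and three extra powers of $\Lambda_h$, and no faster rate is available because of the $\sin x$ mismatch at order $x^3$.

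The main obstacle I anticipate is purely bookkeeping: there are four matrix entries, each a difference of two transcendental/polynomial functions of $x$, and one must produce for each a clean bound of the shape $C x^{k}$ with the correct $k$ and, crucially, with the correct accompanying power of $\sqrt\lambda$, then verify that after weighting by $\lambda^{\alpha}$ or $\lambda^{\alpha-1}$ every resulting term is dominated by $\tau^6$ times the stated power of $\lambda$ acting on the appropriate component $w_i$. A subtlety worth flagging is that the bound must be \emph{uniform} in $j$; this is where the CFL restriction enters decisively, since it confines $x$ to $[0,2)$ where all the Taylor remainders are controlled by their leading term with an absolute constant (e.g. $|\cos x - 1 + x^2/2| \le x^4/24$ and $|\sin x - x + x^3/6| \le x^5/120$ hold globally, but the comparison of remainder size to $x^3$ or $x^4$ is only favourable because $x$ is bounded away from the regime where higher-order terms dominate). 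Once the scalar estimates are in hand, reassembling via Parseval over $j$ and over the two components is immediate and yields \eqref{eq:EminusM}.
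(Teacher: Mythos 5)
Your proposal follows essentially the same route as the paper's proof: diagonalise in the eigenbasis of $\Lambda_h$, reduce \eqref{eq:EminusM} to scalar Taylor estimates in $x=\tau\sqrt{\lambda_{h,j}}$ for each of the four entries of $E_h-M$, and reassemble via Parseval with the weights $\lambda_{h,j}^{\alpha}$ and $\lambda_{h,j}^{\alpha-1}$. The one point where you diverge is your ``decisive'' use of the CFL condition to confine $x$ to $[0,2)$: the paper's proof (and the lemma statement) needs no CFL at all, since instead of expanding $\sin x-(x-x^3/4)$ and controlling the $O(x^5)$ remainder by boundedness of $x$, it splits off the $\tau^3\Lambda_h/4$ term by the triangle inequality and uses the global bounds $|\sin x - x|\le x^3/6$ and $|\cos x-1+x^2/2|\le x^3/6$, yielding for instance $\frac{5}{12}\tau^3\|v_2\|_{h,\alpha+2}$ for the $(1,2)$ entry; likewise, keeping the cosine difference at order $x^3$ rather than $x^4$ is what produces exactly the three extra powers of $\Lambda_h$ in $\vertiii{V}_{h,\alpha+3}$ without any further appeal to $x\le 2$. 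Your argument is correct in the regime where the lemma is applied, but the CFL dependence is an artefact of your bookkeeping rather than a genuine ingredient.
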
 
\begin{proof} 
We first note that
\begin{equation*} 
\begin{aligned}
(E_h - M)V=& \left[ \begin{matrix}(\cos(\tau\Lambda_h^{1/2})- (I-\frac{\tau^2\Lambda_h}{2}))v_1 + ( \Lambda_{h}^{-1/2}\sin(\tau\Lambda_h^{1/2})-(\tau I - \frac{\tau^3\Lambda_h}{4} ) )v_2\\ (-\Lambda_h^{1/2}\sin(\tau\Lambda_h^{1/2}) +\tau\Lambda_h )v_1+(\cos(\tau\Lambda_h^{1/2})-(I-\frac{\tau^2\Lambda_h}{2})) v_2 \end{matrix}\right].\\
\end{aligned}
\end{equation*}
Therefore, we have by the triangle inequality
\begin{equation} \label{eq:firstterm}
\begin{aligned}
\vertiii{ (E_h - M)V}_{h,\alpha}
\leq &\| (\cos(\tau\Lambda_h^{1/2})- (I-\frac{\tau^2\Lambda_h}{2}))v_1\|_{h,\alpha} \\
& +\|( \Lambda_{h}^{-1/2}\sin(\tau\Lambda_h^{1/2})-(\tau I - \frac{\tau^3\Lambda_h}{4} ) )v_2 \|_{h,\alpha} \\
&  + \| (-\Lambda_h^{1/2}\sin(\tau\Lambda_h^{1/2}) +\tau\Lambda_h )v_1 \|_{h,\alpha-1} \\
&+ \| (\cos(\tau\Lambda_h^{1/2})- (I-\frac{\tau^2\Lambda_h}{2}))v_2\|_{h,\alpha-1}\\
=& I+ II+ III+ IV.
\end{aligned}
\end{equation}
By the definition of the $\|\cdot \|_{h,\alpha}$ norm, we have for $I$
\begin{equation*}
\begin{split}
I=\left(\sum\limits_{j=1}^{N_h}\lambda_{h,j}^\alpha | \cos(\tau\sqrt{\lambda_{h,j}})-1 +\frac{\tau^2\lambda_{h,j}}{2}|^2(v_1,\phi_{h,j})^2 \right)^{1/2}.
\end{split}
\end{equation*}
Using Taylor's  theorem it holds that $|\cos(\tau\sqrt{\lambda_{h,j}}) -(1-\frac{\tau^2}{2}\lambda_{h,j})|=\frac{|\sin(\xi)|\tau^3\lambda_{h,j}^{3/2}}{6}\leq \frac{\tau^3\lambda_{h,j}^{3/2}}{6}$, for some $\xi\in(0,\tau\sqrt{\lambda_{h,j})}$, thus we get 
\begin{equation} \label{eq:firstterm2}
\begin{aligned}
I\leq \left(\sum\limits_{j=1}^{N_h} \lambda_{h,j}^\alpha| \frac{\tau^3\lambda_{h,j}^{3/2}}{6}|^2(v_1,\phi_{h,j})^2 \right)^{1/2}=\frac{\tau^3}{6}\|v_1 \|_{h,\alpha+3}.
\end{aligned}
\end{equation}
We now look at $II$ and again by  Taylor's theorem it holds that $| \lambda_{h,j}^{-1/2}\sin(\tau\sqrt{\lambda_{h,j}})-\tau|\leq \frac{\tau^3\lambda_{h,j}^{2}}{6}$, hence we  obtain
\begin{equation} \label{eq:secondterm3}
\begin{aligned}
II\leq  \|  (\Lambda_{h}^{-1/2}\sin(\tau\Lambda_h^{1/2})-\tau I)v_2 \|_{h,\alpha} + \frac{\tau^3}{4}\| v_2 \|_{h,\alpha+2} \leq \frac{5}{12}\tau^3\| v_2\|_{h,\alpha+2}.
\end{aligned}
\end{equation}
Similarly, we get  for $III$
\begin{equation} \label{eq:thirdterm}
\begin{aligned}
III\leq \left(\sum\limits_{j=1}^{N_h}\lambda^{\alpha-1}_{h,j} (\tau^3\lambda_{h,j}^2)^2(v_1,\phi_{h,j})^2\right)^{1/2}=\frac{\tau^3}{6}\| v_1\|_{h,\alpha+3}.
\end{aligned}
\end{equation}
Finally, we have for $IV$
\begin{equation} \label{eq:fourthterm}
\begin{aligned}
IV\leq \left(\sum\limits_{j=1}^{N_h}\lambda^{\alpha-1}_{h,j} (\frac{\tau^3\lambda_{h,j}^{3/2}}{6})^2(v_2,\phi_{h,j})^2\right)^{1/2}=\frac{\tau^3}{6}\| v_2\|_{h,\alpha+2}.
\end{aligned}
\end{equation}
Using (\ref{eq:firstterm2}), (\ref{eq:secondterm3}), (\ref{eq:thirdterm}), and (\ref{eq:fourthterm}), gives for (\ref{eq:firstterm})
\begin{equation*} 
\begin{split}
\vertiii{ (E_h - M)V}_{h,\alpha} &\leq \frac{1}{3}\tau^3\|v_1\|_{h,\alpha+3}+ \frac{7}{12}\tau^3\|  v_2\|_{h,\alpha+2}
\leq \tau^3\vertiii{ V}_{h,\alpha+3}.
\end{split}
\end{equation*}
The above bound completes the proof of the lemma.
\end{proof}

\begin{theorem} 
\label{the:fullydisces}
Let $X^n=[X_1^n, X_2^n]^T$ be the numerical approximation of (\ref{eq:dgsde}) by the stochastic position Verlet method (\ref{eq:stochasticsv}). Assume that $\| U_{h,0} \|_{L_2({\Omega, {\cal H}_h^\beta})}<\infty$ for some $\beta\geq 0$ and that  f satisfies (\ref{eq:assumf}). Then
there exists  a constant $C>0$ depending on  $T, \| U_{h,0} \|_{L_2({\Omega, {\cal H}_h^\beta})}$, and $ \|\Lambda^{(\beta-1)/2}Q^{1/2}\|_{\rm HS}$, but   independent of $h$ and $\tau$,  under the CFL condition \eqref{eq:CFL}, such that:
\begin{enumerate}[i.]
\item   If $W(t)$ satisfies Assumption \ref{assum:noise} for some $\beta\geq 0$, then
\begin{equation} \label{eq:firstcompes}
\| u_{h,1}(t_n)-X_1^{n}\|_{L_2\left(\Omega, \dot{H}_h^0\right)} \leq C\tau^{\min(\frac{2}{3}\beta,1)}.
\end{equation}
\item If $W(t)$ satisfies Assumption \ref{assum:noise} for some $\beta\geq 1$, then
\begin{equation} \label{eq:secondcompes}
\| u_{h,2}(t_n)-X_2^n\|_{L_2\left(\Omega, \dot{H}^0_h\right)} \leq C\tau^{\min(\frac{2}{3}(\beta-1),1)}.
\end{equation}
\end{enumerate}
\end{theorem}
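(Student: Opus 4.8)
The plan is to bound $\mathbb{E}[\vertiii{U_h(t_n)-X^n}_{h,\alpha}^2]^{1/2}$ for $\alpha=0$ and $\alpha=1$; since $\vertiii{[v_1,v_2]^T}_{h,\alpha}^2=\|v_1\|_{h,\alpha}^2+\|v_2\|_{h,\alpha-1}^2$, the bound
\[
\mathbb{E}\big[\vertiii{U_h(t_n)-X^n}_{h,\alpha}^2\big]^{1/2}\le C\,\tau^{\min(\tfrac{2}{3}(\beta-\alpha),\,1)}
\]
yields \eqref{eq:firstcompes} when $\alpha=0$ and \eqref{eq:secondcompes} when $\alpha=1$. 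I would first subtract the recursion \eqref{eq:svnsteps2} from the semi-discrete mild solution \eqref{eq:discretemild}, writing $\Delta W^j=\int_{t_{j-1}}^{t_j}{\rm d}W(s)$ and $\tau DP_hf(X_1^{j-1/2})=\int_{t_{j-1}}^{t_j}DP_hf(X_1^{j-1/2}){\rm d}s$, which splits the error into $\Theta_1^n+\Theta_2^n+\Theta_3^n$: the initial-value error $\Theta_1^n=(E_h(t_n)-M^n)U_{h,0}$, the drift error $\Theta_2^n=\sum_{j=1}^n\int_{t_{j-1}}^{t_j}\big(E_h(t_n-s)BP_hf(u_{h,1}(s))-M^{n-j}DP_hf(X_1^{j-1/2})\big){\rm d}s$, and the stochastic error $\Theta_3^n=\sum_{j=1}^n\int_{t_{j-1}}^{t_j}\big(E_h(t_n-s)B-M^{n-j}D\big)P_h\,{\rm d}W(s)$.

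The workhorse will be a multi-step, interpolated strengthening of Lemma~\ref{lem:EminusM}. Using the telescoping identity $E_h(t_m)-M^m=\sum_{\ell=0}^{m-1}M^\ell(E_h(\tau)-M)E_h(\tau)^{m-1-\ell}$, the fact that $E_h(\tau)$ is an isometry in every $\vertiii{\cdot}_{h,\gamma}$ (because $C_h^2+S_h^2=I$ and the cross terms cancel), that $M$ is an isometry in $\|\cdot\|_{m,\gamma}$ by Lemma~\ref{lem:boundm} with $\|\cdot\|_{m,\gamma}\simeq\vertiii{\cdot}_{h,\gamma}$ by Lemma~\ref{lem:normeq}, and Lemma~\ref{lem:EminusM} itself, I would obtain $\vertiii{(E_h(t_m)-M^m)V}_{h,\alpha}\le Cm\tau^3\vertiii{V}_{h,\alpha+3}\le C\tau^2\vertiii{V}_{h,\alpha+3}$ (as $m\tau\le T$). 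Interpolating this with the trivial bound $\vertiii{(E_h(t_m)-M^m)V}_{h,\alpha}\le C\vertiii{V}_{h,\alpha}$ — legitimate since on each eigenspace of $\Lambda_h$ the operator is a $2\times2$ matrix and there $\vertiii{\cdot}_{h,\alpha+\sigma}=\lambda_{h,j}^{\sigma/2}\vertiii{\cdot}_{h,\alpha}$ — gives the key estimate
\[
\vertiii{(E_h(t_m)-M^m)V}_{h,\alpha}\le C\tau^{2\theta}\vertiii{V}_{h,\alpha+3\theta},\qquad\theta\in[0,1].
\]
I would also record, for $r\in[0,\tau]$, the $B$-versus-$D$ discrepancy $E_h(r)B-D=[\Lambda_h^{-1/2}S_h(r)-\tfrac{\tau}{2} I,\ C_h(r)-I]^T$, which by Taylor's theorem and the CFL condition \eqref{eq:CFL} (which keeps $\tau^2\Lambda_h$ uniformly bounded) satisfies $\vertiii{(E_h(r)B-D)v}_{h,\alpha}\le C\tau\|v\|_{h,\alpha}$.

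With $\theta:=\min(\tfrac{\beta-\alpha}{3},\tfrac{1}{2})$, so that $2\theta=\min(\tfrac{2}{3}(\beta-\alpha),1)$ and $\alpha+3\theta\le\beta$, the initial-value term is immediate: $\mathbb{E}[\vertiii{\Theta_1^n}_{h,\alpha}^2]^{1/2}\le C\tau^{2\theta}\|U_{h,0}\|_{L_2(\Omega,{\cal H}_h^{\alpha+3\theta})}\le C\tau^{2\theta}$. For $\Theta_3^n$ I would apply It\^{o}'s isometry \eqref{eq:itosisometry} in ${\cal H}_h^\alpha$, split $E_h(t_n-s)B-M^{n-j}D=(E_h((n-j)\tau)-M^{n-j})E_h(t_j-s)B+M^{n-j}(E_h(t_j-s)B-D)$, estimate the two pieces against $v=P_hQ^{1/2}e_k$ by the two displayed bounds, sum over $k$ to turn the right-hand sides into $\|\Lambda_h^{\gamma/2}P_hQ^{1/2}\|_{\rm HS}$, and control this by $\|\Lambda^{(\beta-1)/2}Q^{1/2}\|_{\rm HS}$ via \eqref{eq:lambdahlambda}, using the inverse estimate \eqref{eq:inverse} and \eqref{eq:CFL} to absorb any surplus power of $h^{-1}$ into $\tau^{-1}$ when $\gamma>\beta-1$ (this is exactly what saturates the rate at $1$); the $B$-vs-$D$ piece contributes a power of $\tau$ no smaller than $\tau^{2\theta}$. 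For $\Theta_2^n$ I would write the integrand as $(E_h(t_n-s)B-M^{n-j}D)P_hf(u_{h,1}(s))+M^{n-j}DP_h\big(f(u_{h,1}(s))-f(X_1^{j-1/2})\big)$; the first summand is treated exactly like $\Theta_3^n$ with $P_hf(u_{h,1}(s))$ in place of $P_hQ^{1/2}e_k$, using $\|f(u)\|_\gamma\le C(1+\|u\|_\gamma)$ from \eqref{eq:assumf} and the a priori bound \eqref{eq:mildstab} for $u_{h,1}$, while for the second, $\vertiii{M^{n-j}DP_hg}_{h,\alpha}\le C\|g\|$ (using \eqref{eq:inverse} and \eqref{eq:CFL} again when $\alpha=1$) together with the Lipschitz bound in \eqref{eq:assumf} gives $\|g\|\le C\|u_{h,1}(s)-X_1^{j-1/2}\|$; inserting $u_{h,1}(t_{j-1})$ and $X_1^{j-1}$, I would bound $\|u_{h,1}(s)-u_{h,1}(t_{j-1})\|$ by the H\"{o}lder continuity \eqref{eq:holdersol}, the half-step difference $\|X_1^{j-1/2}-X_1^{j-1}\|=\tfrac{\tau}{2}\|X_2^{j-1}\|$ by the stability estimate \eqref{eq:stochsvstability} (and \eqref{eq:inverse}, \eqref{eq:CFL} when $\beta<1$), and $\|u_{h,1}(t_{j-1})-X_1^{j-1}\|\le C\vertiii{U_h(t_{j-1})-X^{j-1}}_{h,\alpha}$, the last term feeding the induction. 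Collecting everything yields
\[
\mathbb{E}\big[\vertiii{U_h(t_n)-X^n}_{h,\alpha}^2\big]^{1/2}\le C\tau^{\min(\tfrac{2}{3}(\beta-\alpha),1)}+C\tau\sum_{j=0}^{n-1}\mathbb{E}\big[\vertiii{U_h(t_j)-X^j}_{h,\alpha}^2\big]^{1/2},
\]
and the discrete Gronwall inequality closes the proof, $\alpha=0$ giving \eqref{eq:firstcompes} and $\alpha=1$ giving \eqref{eq:secondcompes}.

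I expect the main difficulty to be the regularity bookkeeping rather than any single estimate: one must check at every occurrence that the Sobolev index demanded of the noise (through $\|\Lambda_h^{\gamma/2}P_hQ^{1/2}\|_{\rm HS}$), of the initial data $U_{h,0}$, and of $f(u_{h,1})$ never exceeds what Assumption~\ref{assum:noise}, the hypotheses, and \eqref{eq:mildstab} provide — pushing any deficit through the inverse estimate \eqref{eq:inverse} and the CFL relation \eqref{eq:CFL}, which is precisely why the rate cannot exceed $1$ — and that the genuinely lower-order contributions, the $B$-vs-$D$ mismatch and the half-step difference $X_1^{j-1/2}-X_1^{j-1}$, always carry a power of $\tau$ at least $\min(\tfrac{2}{3}(\beta-\alpha),1)$. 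The exponent $\tfrac{2}{3}$ is just $2\theta$ with $3\theta=\beta-\alpha$: three spatial derivatives are the price of the third-order local consistency of $M$ recorded in Lemma~\ref{lem:EminusM}.
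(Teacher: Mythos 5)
Your proposal is correct and follows essentially the same route as the paper's proof: the same decomposition of $U_h(t_n)-X^n$ into a deterministic consistency error, a drift error and a stochastic error propagated by powers of $M$, the same key ingredients (the third-order local bound of Lemma~\ref{lem:EminusM}, the isometry of $M$ in $\|\cdot\|_{m,\alpha}$ from Lemma~\ref{lem:boundm} combined with the norm equivalence of Lemma~\ref{lem:normeq}, It\^{o}'s isometry, the H\"older bound \eqref{eq:holdersol}, the stability estimates, the inverse estimate \eqref{eq:inverse} with the CFL condition, and discrete Gronwall). The one genuine difference is where the interpolation is performed: the paper first derives the full-rate bound assuming maximal regularity and then interpolates the final error estimate against the trivial $O(1)$ bound \eqref{eq:firstcomp0}, whereas you interpolate at the operator level, proving $\vertiii{(E_h(t_m)-M^m)V}_{h,\alpha}\leq C\tau^{2\theta}\vertiii{V}_{h,\alpha+3\theta}$ eigenspace by eigenspace and applying it term by term. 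Your variant buys somewhat cleaner regularity bookkeeping — in particular it never requires the intermediate quantity $\vertiii{U_h}_{h,\alpha+3}$ to be finite when $\beta<\alpha+3$, which in the paper's version is only justified a posteriori by the endpoint interpolation — at the cost of a slightly longer consistency argument via the telescoping identity and the isometry of $E_h(\tau)$ in $\vertiii{\cdot}_{h,\gamma}$.
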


\begin{proof} Let $\alpha\in[ 0,1]$. By the stability of the semi-discrete mild solution (\ref{eq:mildstab}) at discrete times $t_n$ and the stability of the approximate solution (\ref{eq:stochsvstability}), and by the triangle inequality,  we have 
\begin{equation} \label{eq:firstcomp0}
\begin{aligned}
\| U_h(t_{n}) - X^{n}\|_{L_2(\Omega, {\cal H}_h^\alpha)}^2& \leq 2 \| U_h(t_{n})\|_{L_2(\Omega, {\cal H}_h^\alpha)}^2 + 2\| X^{n}\|_{L_2(\Omega, {\cal H}_h^\alpha)}^2\\
&\leq C\left(\left( \| X^0\|_{L_2(\Omega, {\cal H}_h^\alpha)}^2 +1\right)+ \| \Lambda^{(\alpha -1)/2}Q^{1/2}\|_{\rm HS}^2\right).
\end{aligned}
\end{equation}
  Recall that the mild solution (\ref{eq:discretemild}) is given at the discrete times $t_n=n\tau$ by 
\begin{equation*} \label{eq:exactsol}
U_h(t_{n})=E_h(\tau) U_{h}(t_{n-1}) +  \int_{t_{n-1}}^{t_{n}} E_h(t_{n}-s)F(U_h(s)){\rm d}s + \int_{t_{n-1}}^{t_{n}} E_h(t_{n}-s)BP_h{\rm d}W(s).
\end{equation*}
Subtracting  \eqref{eq:stochasticsv_1storder} from the above equation, and adding and subtracting $MU_h(t_{n-1})$ ,  we get 
\begin{equation*} 
\begin{aligned} 
U_h(t_{n})-X^{n}=\,&(E_h(\tau)-M)U_h(t_{n-1}) + M(U_h(t_{n-1})-X^{n-1})\\
& + \int_{t_{n-1}}^{t_{n}} E_h(t_{n}-s)P_hF(U_h(s)){\rm d}s  - \tau  DP_hf(X^{n-1/2}_1)\\
&+\int_{t_{n-1}}^{t_{n}}( E_h(t_{n}-s)B-D)P_h{\rm d}W(s).
\end{aligned}
\end{equation*}
Letting ${\rm Err}_{U}^n= U_h(t_n)- X^n$, Err$_{\rm d}^n=(E_h(\tau)-M)U_h(t_n)$, ${\rm Err}_{\rm non}^n =  \int_{t_{n-1}}^{t_{n}} E_h(t_{n}-s)P_hF(U_h(s)){\rm d}s  - \tau  DP_hf(X^{n-1/2}_1)$, and Err$_s^n=\int_{t_{n-1}}^{t_{n}}( E_h(t_{n}-s)B-D)P_h{\rm d}W(s)$, we have 
\begin{equation*}
\begin{split} 
{\rm Err}_{U}^n &= {\rm Err}_{\rm d}^{n-1} + M {\rm Err}_{U}^{n-1} + {\rm Err}_{\rm non}^{n} + {\rm Err}_{\rm s}^{n}\\
& = \sum\limits_{j=1}^{n}M^{n-j}{\rm Err}_{\rm d}^{j-1} + \sum\limits_{j=1}^{n}M^{n-j}{\rm Err}_{\rm non}^{j}+ \sum\limits_{j=1}^{n}M^{n-j}{\rm Err}_{\rm s}^j,
\end{split}
\end{equation*}
since ${\rm Err}_U^0 = U_{h,0}-X^0= 0$, see \eqref{eq:svnsteps2}.
Using the discrete norm   (\ref{eq:discretemnorm}) and that $\mathbb{E}\left[\langle{\rm Err}_s^j,{\rm Err}_s^k\rangle_{m,\alpha}  \right]=0$, for $j\neq k$, we obtain
\begin{equation*}
\begin{split} 
\mathbb{E}[\|{\rm Err}_U^{n}\|_{m,\alpha}^2]
&\leq  3n \sum\limits_{j=1}^{n}\mathbb{E}\left[\left\|M^{n-j}{\rm Err}_{\rm d}^{j-1}\right\|_{m,\alpha}^2 \right]+3n\sum\limits_{j=1}^{n}\mathbb{E}\left[\left\|M^{n-j}{\rm Err}_{\rm non}^j\right\|_{m,\alpha}^2 \right]\\
&\quad +3 \sum\limits_{j=1}^{n}\mathbb{E}\left[\left\|M^{n-j}{\rm Err}_{\rm s}^j\right\|_{m,\alpha}^2 \right].
\end{split}
\end{equation*}
Employing (\ref{eq:boundm}) we have for the above equation
\begin{equation*}
\begin{split} 
\mathbb{E}[\|{\rm Err}_U^{n}\|_{m,\alpha}^2] 
&\leq 3\sum\limits_{j=1}^{n}\left(  n \mathbb{E}\left[\left\|{\rm Err}_{\rm d}^{j-1}\right\|_{m,\alpha}^2 \right]+n\left[\left\|{\rm Err}_{\rm non}^j\right\|_{m,\alpha}^2 \right]+ \mathbb{E}\left[\left\|{\rm Err}_{\rm s}^j\right\|_{m,\alpha}^2 \right]\right).
\end{split}
\end{equation*}
By the norm equivalence (\ref{eq:normeq}), we have 
\begin{equation} \label{eq:erroreq2}
\begin{split}
\mathbb{E}\left[\vertiii{{\rm Err}_U^{n}}_{h,\alpha}^2\right]& \leq C \sum\limits_{j=1}^{n}\left(n \mathbb{E}\left[\vertiii{{\rm Err}_{\rm d}^{j-1}}_{h,\alpha}^2 \right]+n\mathbb{E}\left[\vertiii{{\rm Err}_{\rm non}^j}_{h,\alpha}^2 \right]
+ \mathbb{E}\left[\vertiii{{\rm Err}_{\rm s}^j}_{h,\alpha}^2 \right]\right)\\
&= C\left({\rm Err}_1 +{\rm Err}_2 +{\rm Err}_{3}\right).
\end{split}
\end{equation}
Using  the estimates (\ref{eq:EminusM}) and  (\ref{eq:mildstab}) gives for ${\rm Err}_1$ 
\begin{equation} \label{eq:Ies}
\begin{split}
{\rm Err}_1&\leq \tau^6n  \mathbb{E}\left[\vertiii{ U_{h}(t_{j-1})}_{h,\alpha+3}^2\right]\leq T\tau^5\sup_{t_{j-1}\in[0,T]}\mathbb{E}\left[\vertiii{ U_{h}(t_{j-1})}_{h,\alpha+3}^2\right]\leq C\tau^5.
\end{split}
\end{equation}

By the definition of the $\vertiii{\cdot}_{h,\alpha}$ norm, we have for ${\rm Err}_2$
\begin{equation*} 
\begin{split}
{\rm Err}_2 & = n\left( \mathbb{E}\left[\left\| \int_{t_{j-1}}^{t_j}\Lambda_h^{-1/2}S_h(t_j-s)P_hf(u_{h,1}(s)){\rm d}s - \frac{\tau^2}{2}P_h f(X_1^{j-1/2})\right\|_{h,\alpha}^2\right]\right.\\
& \left. \quad\quad \quad \quad  +\mathbb{E}\left[ \left\|\int_{t_{j-1}}^{t_j}C_h(t_j-s)P_hf(u_{h,1}(s)){\rm d}s - \tau P_h f(X_1^{j-1/2}) \right\|_{h,\alpha-1}^2\right] \right)\\
& =n\left( {\rm Err}_{[2,1]} + {\rm Err}_{[2,2]}\right).
\end{split}
\end{equation*}
Using the triangle inequality,  Taylor's theorem for $\Lambda_h^{-1/2}S_h(t_{j}-s)$ up to first order and (\ref{eq:assumf}) for f, we obtain for ${\rm Err}_{[2,1]}$
\begin{equation*}
 \begin{split} 
 {\rm Err}_{[2,1]}  \leq 2C &\left(\tau \int_{t_{j-1}}^{t_j}|t_j-s|^2 \mathbb{E}\left[ 1+\| u_{h,1}(s)\|^2_{h,\alpha}\right] {\rm d}s 
+\frac{\tau^4}{4}\mathbb{E}\left[1+\vertiii{ X_1^{j-1/2}}_{h,\alpha}^2\right] \right)\\
  \leq 2C &\left(\tau \int_{t_{j-1}}^{t_j}|t_j-s|^2 \sup\limits_{t\in[0,T]}\mathbb{E}\left[ 1+\| u_{h,1}(t)\|^2_{h,\alpha}\right] {\rm d}s \right.\\
&\left. \quad +\frac{\tau^4}{4}\mathbb{E}\left[1+\vertiii{X_1^{j-1/2}}_{h,\alpha}^2\right] \right)\\
&\leq C \tau^4 \left( \mathbb{E}\left[1+\vertiii{ X^0}_{h,\alpha}^2\right] + \|\Lambda^{(\alpha-1)/2}Q^{1/2}\|^2_{\rm HS} \right),
\end{split}
\end{equation*}
by estimates (\ref{eq:mildstab}) and (\ref{eq:stochsvstability}).

Adding and subtracting $ P_hf(u_{h,1}(s)){\rm d}s$ and using the triangle inequality gives 
\begin{equation*} 
\begin{split}
 {\rm Err}_{[2,2]}\leq & \,2  \left(\mathbb{E}\left[ \left\|\int_{t_{j-1}}^{t_j}(C_h(t_j-s)-I)P_hf(u_{h,1}(s)){\rm d}s\right\|_{h,\alpha-1}^2\right]\right.\\
& \left. \quad \quad\quad   +\mathbb{E}\left[ \left\|  \int_{t_{j-1}}^{t_j}P_hf(u_{h,1}(s)){\rm d}s- \tau P_h f(X_1^{j-1/2}) \right\|_{h,\alpha-1}^2\right] \right)\\
  = &\,2  \left(  {\rm Err}_{[2,2]}^1 +  {\rm Err}_{[2,2]}^2\right).
\end{split}
\end{equation*}
Using  Taylor's theorem for $|\cos{((t_{j}-s)\sqrt{\lambda_{h,j}})}-1|\leq (t_{j}-s)\sqrt{\lambda_{h,j}}$, (\ref{eq:assumf}) for f and the stability estimate (\ref{eq:mildstab}), we get for $ {\rm Err}_{[2,2]}^1$
\begin{equation*} 
\begin{split}
 {\rm Err}_{[2,2]}^1\leq &\,\tau \int_{t_{j-1}}^{t_j}|t_j-s|^2{\rm d}s\sup\limits_{t\in[0,T]}\mathbb{E}\left[1+\left\|u_{h,1}(t))\right\|_{h,\alpha}^2\right] \leq C  \tau^4.
\end{split}
\end{equation*}
Adding and subtracting $ \int_{t_{j-1}}^{t_j}P_hf(u_{h,1}(t_{j-1})+\frac{\tau}{2}u_{h,2}(t_{j-1})){\rm d}s$, we obtain for $ {\rm Err}_{[2,2]}^2$
\begin{equation*} 
\begin{split}
 {\rm Err}_{[2,2]}^2 \leq 
&\,2\mathbb{E}\left[\tau \int_{t_{j-1}}^{t_j} \left\| P_h\left(f(u_{h,1}(s)) -f\left(u_{h,1}(t_{j-1})+\frac{\tau}{2}u_{h,2}(t_{j-1})\right)\right) \right\|_{h,\alpha-1}^2{\rm d}s\right]  \\ 
&+2\mathbb{E}\left[ \left\|\tau P_hf\left(u_{h,1}(t_{j-1})+\frac{\tau}{2}u_{h,2}(t_{j-1})\right) -  \tau P_h f(X_1^{j-1/2}) \right\|_{h,\alpha-1}^2\right].
\end{split}
\end{equation*}
Applying (\ref{eq:assumf}) for both terms above and  the fact  that $\|\Lambda_h^{(\alpha -1)/2}u\|\leq \|u\|$, $\alpha\in[0,1]$ for the first term, gives 
\begin{equation*} 
\begin{split}
 {\rm Err}_{[2,2]}^2 
\leq &\,4 C\mathbb{E}\left[ \tau \int_{t_{j-1}}^{t_j} \left(\left\| u_{h,1}(s) -u_{h,1}(t_{j-1})\right\|^2+\left\|\frac{\tau}{2}u_{h,2}(t_{j-1}) \right\|^2 \right){\rm d}s\right]  \\ 
&+ 4 C\tau^2\mathbb{E}\left[ \left\|u_{h,1}(t_{j-1})-X_1^{j-1}\right\|^2_{h,\alpha-1} +\left\|\frac{\tau}{2}u_{h,2}(t_{j-1}) -\frac{\tau}{2}X_2^{j-1}\right\|_{h,\alpha-1}^2\right].
\end{split}
\end{equation*}
By H\"{o}lder's continuity (\ref{eq:holdersol}) for the first term in the above inequality, we have
\begin{equation*} 
\begin{split}
 {\rm Err}_{[2,2]}^2 \leq
& \, 4C  \tau\int_{t_{j-1}}^{t_j}|s-t_{j-1}|^2{\rm d}s
+ 4C \frac{\tau^3}{4} \int_{t_{j-1}}^{t_j}\sup\limits_{t\in[0,T]}\mathbb{E}\left[ \left\| u_{h,2}(t) \right\|_{h,\alpha-1}^2\right]{\rm d}s  \\ 
&+ 4C\tau^2 \mathbb{E}\left[ \left\|u_{h,1}(t_{j-1}) -X_1^{j-1} \right\|_{h,\alpha-1}^2 + \frac{\tau^2}{4} \left\|u_{h,2}(t_{j-1}) -X_2^{j-1} \right\|_{h,\alpha-1}^2\right].
\end{split}
\end{equation*}
Using the stability estimate (\ref{eq:mildstab}) for the second term,  the inverse estimate (\ref{eq:inverse}) together with the CFL condition (\ref{eq:CFL}) for the third term, we get for $ {\rm Err}_{[2,2]}^2$
\begin{equation*} 
\begin{split}
 {\rm Err}_{[2,2]}^2 \leq &\,  C\tau^4
+ 4 \tau^2\mathbb{E}\left[ \left\|u_{h,1}(t_{j-1}) -X_1^{j-1} \right\|_{h,\alpha-1}^2 +\left\|u_{h,2}(t_{j-1}) -X_2^{j-1} \right\|_{h,\alpha-2}^2\right]\\
=& \,  C\tau^4 +  4C\tau^2\mathbb{E}\left[\vertiii{{\rm Err}_U^{j-1}}_{h,\alpha}^2 \right].
\end{split}
\end{equation*}
Combining the estimates for ${\rm Err}_{[2,2]}^1$ and ${\rm Err}_{[2,2]}^2$, we have for ${\rm Err}_{[2,2]}$
\begin{equation*}
{\rm Err}_{[2,2]}\leq   C\tau^4 + 4C\tau^2\mathbb{E}\left[\vertiii{{\rm Err}_U^{j-1}}_{h,\alpha}^2 \right].
\end{equation*}
Then combining this and  the estimate for  ${\rm Err}_{[2,1]}$, we obtain for ${\rm Err}_{2}$
\begin{equation} \label{eq:err2}
\begin{split} 
{\rm Err}_{2}
&\leq C \tau^3 \left( \mathbb{E}\left[1+\vertiii{ X^0}_{h,\alpha}^2\right] + \|\Lambda^{(\alpha-1)/2}Q^{1/2}\|^2_{\rm HS} \right.\\
& \left.+ 4\tau \sum\limits_{j=1}^{n}\mathbb{E}\left[\vertiii{{\rm Err}_U^{j-1}}_{h,\alpha}^2 \right]\right).
\end{split}
\end{equation}

By It$\hat{\rm o}$'s isometry (\ref{eq:itosisometry}) we have for ${\rm Err}_3$ 
\begin{equation*} 
\begin{split}
{\rm Err}_3 & =\int_{t_{j-1}}^{t_{j}}\| \Lambda_h^{\alpha/2}(\Lambda_h^{-1/2}S_h(t_{j}-s)-\frac{\tau}{2}I)P_hQ^{1/2} \|^2_{{\rm HS}}{\rm d}s\\
& \quad+\int_{t_{j-1}}^{t_{j}}\|\Lambda_h^{(\alpha-1)/2} (C_h(t_{j}-s)-I)P_hQ^{1/2} \|^2_{{\rm HS}}{\rm d}s.
\end{split}
\end{equation*}
Using  triangle inequality for the first term, and Taylor's theorem for $\Lambda_h^{-1/2}S_h(t_{j}-s)$ and $C_h(t_j-s)$ up to first order, we get 
\begin{equation*} 
\begin{split}
{\rm Err}_{3}& \leq  3\int_{t_{j-1}}^{t_{j}}|t_{j} -s|^2 \|\Lambda_h^{\alpha/2} P_hQ^{1/2}\|_{\rm HS}^2 {\rm d}s + 2\frac{\tau^3}{4}\|\Lambda_h^{\alpha/2}P_hQ^{1/2}\|_{\rm HS}\\
&\leq \tau^3\|\Lambda_h^{\alpha/2}P_hQ^{1/2}\|_{\rm HS}^2.
\end{split}
\end{equation*}

By estimates (\ref{eq:Ies}), (\ref{eq:err2}) and  the above estimate for ${\rm Err}_{3}$, we have for (\ref{eq:erroreq2})
\begin{equation*}
\begin{split} 
\mathbb{E}\left[\vertiii{{\rm Err}_U^{n}}_{h,\alpha}^2\right] &\leq  C\left(\tau^4  +  \tau^2 \mathbb{E}\left[ 1+\vertiii{X^0}^2_{h,\alpha}\right]\right. \\
&\left.+ \tau^2\|\Lambda^{\alpha/2}Q^{1/2}\|_{\rm HS}^2
+\tau \sum\limits_{j=1}^{n}\mathbb{E}\left[\vertiii{{\rm Err}_U^{j-1}}_{h,\alpha}^2 \right]\right).
\end{split}
\end{equation*} 
By applying the discrete  Gronwall's inequality to the above, we get
\begin{equation*} 
\begin{split}
\mathbb{E}\left[\vertiii{{\rm Err}_U^{n}}_{h,\alpha}^2\right]\leq C &\left( \tau^4 +\tau^2 \mathbb{E}\left[ 1+\vertiii{X^0}^2_{h,\alpha}\right] 
+  \tau^2 \| \Lambda^{\alpha/2}Q^{1/2}\|_{\rm HS}^2\right).
\end{split}
\end{equation*}
By interpolation between (\ref{eq:firstcomp0}) and the above estimate and taking square roots, we have for $\beta\geq \alpha$
\begin{equation*} 
\begin{split}
\| {\rm Err}_U^{n}\|_{L_2(\Omega, {\cal H}_h^\alpha)} \leq & C\left(\tau^{\frac{2}{3}(\beta-\alpha)} (\| X^0\|_{L_2(\Omega, {\cal H}^\beta_h)}  +1)\right.\\
&\left. \qquad \quad +  \tau^{\min(\beta-\alpha,1)} \| \Lambda_h^{(\beta-1)/2}Q^{1/2}\|_{\rm HS}\right).
\end{split}
\end{equation*} 
The proof of estimate (\ref{eq:firstcompes}) follows from setting $\alpha=0$ and the proof of estimate  (\ref{eq:secondcompes}) follows from setting $\alpha=1$ in the above bound.
\end{proof}

We now state the strong convergence rates for the fully discrete  stochastic wave equation (\ref{wave}).
\begin{theorem} 
\label{the:spaceandtime}
Let $U=[u_1,u_2]^T$ and $X=[X^n_1, X^n_2]^T$ be given by (\ref{mildsolution}) and (\ref{eq:svnsteps2}), respectively. Also let  the assumptions of Theorems \ref{the:dgmethodes} and \ref{the:fullydisces} be fulfilled. Then the following estimates hold, under the CFL condition \eqref{eq:CFL}, at discrete times $t_n\in[0,T]$
\begin{equation*} 
\| u_1(t_n)-X^n_1\|_{L_2(\Omega,\dot{ H}^0)}\leq C(\tau^{\min(\frac{2}{3}\beta,1)}+h^{\min(\frac{p+1}{p+2}\beta,p+1)}),\qquad \beta \geq 0,
\end{equation*}

\begin{equation*} 
\| u_2(t_n)-X^n_2\|_{L_2(\Omega,\dot{ H}^0)}\leq C(\tau^{\min(\frac{2}{3}(\beta-1),1)}+h^{\min(\frac{p+1}{p+2}(\beta-1),p+1)}), \qquad \beta \geq 1.
\end{equation*}

\end{theorem}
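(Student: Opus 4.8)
The plan is to split the error into a spatial part, controlled by Theorem~\ref{the:dgmethodes}, and a temporal part, controlled by Theorem~\ref{the:fullydisces}, and to combine them by the triangle inequality. Let $U_h=[u_{h,1},u_{h,2}]^T$ denote the semi-discrete mild solution \eqref{eq:discretemild}, with initial projections $u_{h,0}=\Pi_h u_0$ and $v_{h,0}$ taken to be $P_h v_0$ when estimating the displacement and $\Pi_h v_0$ when estimating the velocity, so that the hypotheses of the relevant parts of Theorem~\ref{the:dgmethodes} are met. Since $u_{h,1}(t_n)$ and $X_1^n$ both lie in $V_h$ and $\|v\|_{h,0}=\|\Lambda_h^{0}v\|=\|v\|$, the discrete norm on $\dot H^0_h$ agrees with the $L_2({\cal D})$-norm on $V_h$; hence $u_1(t_n)$, $u_{h,1}(t_n)$ and $X_1^n$ all lie in $L_2(\Omega,L_2({\cal D}))$ and
\begin{equation*}
\|u_1(t_n)-X_1^n\|_{L_2(\Omega,\dot H^0)}\leq\|u_1(t_n)-u_{h,1}(t_n)\|_{L_2(\Omega,\dot H^0)}+\|u_{h,1}(t_n)-X_1^n\|_{L_2(\Omega,\dot H^0_h)},
\end{equation*}
with the analogous inequality for the second components.

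For the spatial term I would invoke \eqref{stoerror1} of Theorem~\ref{the:dgmethodes}, which yields $\|u_1(t_n)-u_{h,1}(t_n)\|_{L_2(\Omega,\dot H^0)}\leq Ch^{\min(\frac{p+1}{p+2}\beta,p+1)}$ for $\beta\geq0$, and \eqref{stoerror2}, which yields $\|u_2(t_n)-u_{h,2}(t_n)\|_{L_2(\Omega,\dot H^0)}\leq Ch^{\min(\frac{p+1}{p+2}(\beta-1),p+1)}$ for $\beta\geq1$. For the temporal term I would invoke \eqref{eq:firstcompes} of Theorem~\ref{the:fullydisces}, which yields $\|u_{h,1}(t_n)-X_1^n\|_{L_2(\Omega,\dot H^0_h)}\leq C\tau^{\min(\frac{2}{3}\beta,1)}$, and \eqref{eq:secondcompes}, which yields $\|u_{h,2}(t_n)-X_2^n\|_{L_2(\Omega,\dot H^0_h)}\leq C\tau^{\min(\frac{2}{3}(\beta-1),1)}$ for $\beta\geq1$. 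Adding the two contributions for each component gives precisely the claimed estimates.

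The one point that needs care is that the constant $C$ in Theorem~\ref{the:fullydisces} depends on $\|U_{h,0}\|_{L_2(\Omega,{\cal H}^\beta_h)}$, so in order that the final bound be genuinely independent of $h$ one must control this quantity by $\|U_0\|_{L_2(\Omega,{\cal H}^\beta)}$ uniformly in $h$. This follows from stability of the projections in the discrete norms, namely $\|\Pi_h u_0\|_{h,\beta}\leq C\|u_0\|_\beta$ and $\|P_h v_0\|_{h,\beta-1}\leq C\|v_0\|_{\beta-1}$ (and similarly with $\Pi_h$ replacing $P_h$) for $\beta$ in the admissible range, which in turn rests on the norm equivalence \eqref{eq:energynormeq}, the projection bounds \eqref{eq:l2esproj} and \eqref{eq:ellespro}, and the comparison \eqref{eq:lambdahlambda} between $\Lambda_h$ and $\Lambda$, the same ingredients already used in Lemmas~\ref{lem:disc_exist} and~\ref{lem:stabilitystoch}. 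Once this is in place no new analytic difficulty arises, so the main obstacle is really bookkeeping: keeping the two distinct admissibility ranges for $\beta$ and the two distinct choices of $v_{h,0}$ consistent between the spatial and temporal halves of the argument.
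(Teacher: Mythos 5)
Your proposal is correct and follows essentially the same route as the paper, whose proof is exactly the triangle-inequality splitting into the spatial error from Theorem~\ref{the:dgmethodes} and the temporal error from Theorem~\ref{the:fullydisces}. Your additional remarks on the $h$-uniform control of $\|U_{h,0}\|_{L_2(\Omega,{\cal H}^\beta_h)}$ and on keeping the choices of $v_{h,0}$ consistent are careful bookkeeping that the paper leaves implicit, not a different argument.
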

\begin{proof} 
The proof follows from Theorems \ref{the:dgmethodes} and \ref{the:fullydisces} and triangle inequality.
\end{proof}

\section{Energy conservation}
\label{sec:energy}
In this section, we state bounds for the energy (or Hamiltonian) of the fully discrete  stochastic wave equation \eqref{eq:dgsde} and the linear analogue of it. We consider a trace-class $Q$-Wiener process, i.e., ${\rm Tr}(Q)=\|Q^{1/2}\|_{\rm HS}^2<\infty$ and the nonlinearity $f(u)=-V'(u)$ for a smooth potential V.  The ``Hamiltonian'' function for the dG approximation $U_h$  is defined on ${\cal H}_h^1 = \dot{H}_h^1\times \dot{H}_h^0$ as 
\begin{equation*} 
\begin{split}
H(U_h) 
= \frac{1}{2}\|u_{h,1}\|_*^2 + \frac{1}{2}\|u_{h,2}\|^2 +  \int_{\cal D}V(u_{h,1}){\rm d}x,
\end{split}
\end{equation*}
where the broken norm $\|\cdot\|_*$ is defined in \eqref{eq:starnorm}.
In the following proposition we state the trace formula for the dG semi-discrete exact solution \eqref{eq:discretemild}.
\begin{prop} 
\label{prop:traceformula}
Let $f(u)=-V'(u)$ for a smooth potential $V\colon \mathbb{R} \rightarrow \mathbb{R}$, ${\rm Tr}(Q)<\infty $, and  the Hamiltonian H be defined as above. Then, the dG approximation to the stochastic wave equation \eqref{wave}, $U_h(t)$ in \eqref{eq:discretemild}, satisfies the  trace formula 
\begin{equation} 
\mathbb{E}\left[ H(U_h(t))\right]= \mathbb{E}\left[ H(U_{h,0})\right] + \frac{1}{2}t{\rm Tr}(P_hQP_h), \quad t\in[0,T].
\end{equation}
\end{prop}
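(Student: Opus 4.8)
The plan is to apply the infinite-dimensional It\^o formula to the functional $H$ evaluated along the semi-discrete mild solution $U_h(t)$ solving \eqref{eq:dgsde}. Since $U_h$ is a finite-dimensional process (living in $V_h\times V_h$) driven by the projected noise $BP_h\,{\rm d}W$, this is really just the finite-dimensional It\^o formula, and the functional $H$ is smooth because $V$ is smooth and the quadratic terms are polynomials in $U_h$. First I would note that $H(U_h) = \tfrac12 B_h(u_{h,1},u_{h,1}) + \tfrac12\|u_{h,2}\|^2 + \int_{\cal D}V(u_{h,1})\,{\rm d}x$ by the coercivity identity $\|u\|_*^2 = 2B_h(u,u)$ — wait, more precisely one should carry $H$ in the form using $B_h$, since $B_h(u_{h,1},u_{h,1})$ is the quantity that interacts cleanly with $\Lambda_h$; here I would actually write $H(U_h)=\tfrac12\|u_{h,1}\|_{h,1}^2+\tfrac12\|u_{h,2}\|^2+\int_{\cal D}V(u_{h,1})\,{\rm d}x$ and use $\|u_{h,1}\|_{h,1}^2 = B_h(u_{h,1},u_{h,1})$, consistent with the norm equivalence \eqref{eq:energynormeq}; I will just need to be careful that the stated $H$ uses $\|\cdot\|_*^2$, so I would first replace it using $\|u_{h,1}\|_*^2$ and $\|u_{h,1}\|_{h,1}^2$ agree only up to constants — so I should actually work directly with the $\|\cdot\|_*^2$ expression and use $\tfrac12\|u\|_*^2 = B_h(u,u)$ from \eqref{coercivity}'s sharp form... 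In any case, the clean route is: write $H(U_h) = B_h(u_{h,1},u_{h,1}) \cdot(\text{const}) + \dots$; I will instead just restate $H$ via $\Lambda_h$ and appeal to the norm equivalence to keep the bookkeeping honest, noting the computation is done with whichever equivalent representation makes the gradients of $H$ simplest.

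The main computation is as follows. Applying It\^o's formula to $t\mapsto H(U_h(t))$, the drift terms are $\langle \nabla_{u_{h,1}}H,\ u_{h,2}\rangle + \langle \nabla_{u_{h,2}}H,\ -\Lambda_h u_{h,1}+P_hf(u_{h,1})\rangle$, where $\nabla_{u_{h,1}}H = \Lambda_h u_{h,1} - P_hf(u_{h,1})$ (using $f=-V'$ and that the gradient of $u\mapsto\int_{\cal D}V(u)\,{\rm d}x$ in $\dot H_h^0$ is $P_hV'(u) = -P_hf(u)$), and $\nabla_{u_{h,2}}H = u_{h,2}$. These two drift contributions cancel exactly: $(\Lambda_h u_{h,1}-P_hf(u_{h,1}), u_{h,2}) + (u_{h,2}, -\Lambda_h u_{h,1}+P_hf(u_{h,1})) = 0$. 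The stochastic (martingale) term is $(u_{h,2}, P_h\,{\rm d}W)$, which has zero expectation. The It\^o correction term is $\tfrac12\,{\rm Tr}$ of the second derivative of $H$ composed with the noise covariance: since $\partial^2 H/\partial u_{h,2}^2 = I$ on $V_h$ and the noise enters only the second component through $BP_h$, this gives $\tfrac12\sum_k \|P_hQ^{1/2}e_k\|^2\,{\rm d}t = \tfrac12\,{\rm Tr}(P_hQP_h)\,{\rm d}t$, which is finite by the trace-class assumption on $Q$ and boundedness of $P_h$. Taking expectations, the martingale term vanishes and integrating in time yields $\mathbb{E}[H(U_h(t))] = \mathbb{E}[H(U_{h,0})] + \tfrac12 t\,{\rm Tr}(P_hQP_h)$.

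The main obstacle, such as it is, is justifying the application of It\^o's formula rigorously: one must confirm that $H$ composed with the mild solution is in the domain where It\^o's formula applies, i.e.\ that $U_h$ is a genuine It\^o process (a strong/variational solution in the finite-dimensional space $V_h\times V_h$, not merely a mild solution) so that $\Lambda_h U_h$ makes sense — this is immediate here since $V_h$ is finite-dimensional and $\Lambda_h$ is a bounded operator on it, so the mild solution \eqref{eq:discretemild} coincides with the strong solution of the SDE system \eqref{eq:dgsde}. A secondary technical point is the integrability needed to drop the martingale term: $\mathbb{E}\int_0^t\|u_{h,2}(s)\,P_hQ^{1/2}\|^2\,{\rm d}s<\infty$, which follows from the stability bound \eqref{eq:mildstab} (with $\beta=1$, giving $\mathbb{E}\|u_{h,2}(s)\|^2<\infty$ uniformly on $[0,T]$) together with ${\rm Tr}(Q)<\infty$. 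With these in place the identity is exact, not just an inequality. I would also remark that the same argument applied to the linear equation ($f\equiv 0$, equivalently $V\equiv 0$) gives the corresponding linear trace formula, as the cancellation of drift terms is unchanged.
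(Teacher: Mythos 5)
Your proof is correct and is essentially the argument the paper relies on: the paper's own proof simply cites Proposition 5 of \cite{anton2015}, which is precisely this It\^o-formula computation (the drift terms cancel by the Hamiltonian structure, the martingale term vanishes in expectation, and the It\^o correction yields $\tfrac12 t\,\mathrm{Tr}(P_hQP_h)$), transplanted to the dG setting by replacing $\Lambda$ with $\Lambda_h$. Your hesitation over $\|\cdot\|_*$ versus $\|\cdot\|_{h,1}$ is resolved the way you suspected: the exact identity requires reading the kinetic term as $\tfrac12\|u_{h,1}\|_{h,1}^2=\tfrac12 B_h(u_{h,1},u_{h,1})$, which is the identification $\|u\|_*=\|\Lambda_h^{1/2}u\|$ the paper invokes in its proof (Lemma \ref{coer_and_cont} only gives norm equivalence, so the stated $H$ must be understood in that $B_h$ form for the formula to be an equality rather than a two-sided bound).
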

\begin{proof}
The proof  follows from Proposition 5 in \cite{anton2015} by taking into account that $\|u\|_* = \| \Lambda_h^{1/2} u\|$.
\end{proof}

We now derive a bound for the Hamiltonian of the temporal approximation to the nonlinear stochastic wave equation. 
\begin{theorem} 
\label{thm:energynon}
Let $f$ and $W$ be as in Proposition \ref{prop:traceformula}. Also let the assumptions in Lemma \ref{lem:stabilitystoch} be fulfilled for $\beta=1$. The numerical approximation of \eqref{eq:dgsde} by the stochastic position Verlet method \eqref{eq:svnsteps2} satisfies, under the CFL condition \eqref{eq:CFL} the following bound for the Hamiltonian H 
\begin{equation} \label{eq:discreteH}
\mathbb{E}\left[ H(X^n)\right]\leq  \mathbb{E}\left[ H(X^0)\right]+  C\exp(2Ct_n)( t_n \| Q^{1/2}\|_{\rm HS}^2 +t_n^2),
\end{equation} 
for $0\leq t_n\leq T$ and $\hat{C}$ independent of $h, \tau$, and $T$.
\end{theorem}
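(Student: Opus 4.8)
The plan is to track the one-step evolution of the Hamiltonian under the SVM scheme and sum the resulting increments. First I would write out $H(X^n)$ in terms of $H(X^{n-1})$. Recall the scheme \eqref{eq:stochasticsv}: the middle update gives $X_2^n = X_2^{n-1} - \tau\Lambda_h X_1^{n-1/2} + \tau P_h f(X_1^{n-1/2}) + P_h\Delta W^n$, while $X_1^{n-1/2} = X_1^{n-1} + \tfrac{\tau}{2}X_2^{n-1}$ and $X_1^n = X_1^{n-1/2} + \tfrac{\tau}{2}X_2^n$. The quadratic part $\tfrac12\|u_{h,1}\|_*^2 + \tfrac12\|u_{h,2}\|^2 = \tfrac12\|\Lambda_h^{1/2}X_1\|^2 + \tfrac12\|X_2\|^2$ is, up to the potential term and the noise, exactly the (discrete) energy that the position Verlet integrator preserves for the deterministic linear flow; this is essentially the content of Lemma~\ref{lem:spectraldec} and Lemma~\ref{lem:boundm} with $\alpha=1$ (the $\|\cdot\|_{m,1}$ norm is comparable to the quadratic Hamiltonian). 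So I would first isolate the purely deterministic-linear part of the update, which contributes nothing to $\mathbb{E}[H]$, and then carefully expand the cross terms coming from (a) the nonlinearity $\tau P_h f(X_1^{n-1/2})$ and (b) the noise increment $P_h\Delta W^n$.

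The key computation is: expanding $\tfrac12\|X_2^n\|^2$ produces the deterministic term plus cross terms $\tau(X_2^{n-1/2}, P_h f(X_1^{n-1/2}))$-type expressions, $\tau^2\|P_h f(X_1^{n-1/2})\|^2$, the Itô-type term $\tfrac12\|P_h\Delta W^n\|^2$, and martingale cross terms with $\Delta W^n$. For the potential part one uses $V(X_1^n) - V(X_1^{n-1}) = -\int \!f\,(\cdots)$; since $X_1^n - X_1^{n-1} = \tfrac{\tau}{2}(X_2^{n-1} + X_2^n)$, a Taylor expansion of $V$ about $X_1^{n-1/2}$ (noting $X_1^n = X_1^{n-1/2} + \tfrac{\tau}{2}X_2^n$ and $X_1^{n-1} = X_1^{n-1/2} - \tfrac{\tau}{2}X_2^{n-1}$) pairs the first-order terms against the $f(X_1^{n-1/2})$ contributions from the kinetic part, leaving only second- and higher-order remainders controlled by $\tau^2$ times bounds on $f' = -V''$ and on $\|X_2\|^2$. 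Taking expectations kills all the $\Delta W^n$ martingale cross terms (using that $\Delta W^n$ is independent of $\mathcal F_{t_{n-1}}$ and $X_1^{n-1/2}, X_2^{n-1}$ are $\mathcal F_{t_{n-1}}$-measurable), and $\mathbb{E}\|P_h\Delta W^n\|^2 = \tau\,\mathrm{Tr}(P_h Q P_h) \le \tau\|Q^{1/2}\|_{\rm HS}^2$ by Itô's isometry \eqref{eq:itosisometry}. So I would arrive at an inequality of the shape
\begin{equation*}
\mathbb{E}[H(X^n)] \le \mathbb{E}[H(X^{n-1})] + C\tau\,\|Q^{1/2}\|_{\rm HS}^2 + C\tau^2\big(1 + \mathbb{E}[\vertiii{X^{n-1}}_{h,1}^2] + \mathbb{E}[\vertiii{X^{n-1/2}}_{h,1}^2]\big),
\end{equation*}
where the $\tau^2$ terms collect: the $\tau^2\|P_h f\|^2$ kinetic cross term, the Taylor remainder from $V$, and any leftover second-order cross terms — all bounded using the Lipschitz/growth bounds \eqref{eq:assumf} on $f$ and the inverse estimate \eqref{eq:inverse} together with the CFL condition \eqref{eq:CFL} to absorb factors of $\tau/h$.

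Next I would invoke the stability bound \eqref{eq:stochsvstability} from Lemma~\ref{lem:stabilitystoch} (with $\beta=1$) to control $\mathbb{E}[\vertiii{X^{j}}_{h,1}^2] \le C\exp(2Ct_j)(\|X^0\|_{L_2(\Omega,\mathcal H_h^1)}^2 + t_j\|Q^{1/2}\|_{\rm HS}^2 + t_j^2)$, and similarly for the half-step $X^{j-1/2}$ (which is a bounded linear combination of $X_1^{j-1}$ and $X_2^{j-1}$, so inherits the same bound via the inverse estimate and CFL). Summing the one-step inequality over $j=1,\dots,n$ and using $n\tau = t_n$ gives
\begin{equation*}
\mathbb{E}[H(X^n)] \le \mathbb{E}[H(X^0)] + C t_n \|Q^{1/2}\|_{\rm HS}^2 + C\tau \sum_{j=1}^n \big(\tau + \tau\,\mathbb{E}[\vertiii{X^{j-1}}_{h,1}^2]\big),
\end{equation*}
and plugging in the stability estimate turns the last sum into $C\exp(2Ct_n)(t_n\|Q^{1/2}\|_{\rm HS}^2 + t_n^2)$ (after bounding the sum by $t_n$ times the max and absorbing constants), which yields \eqref{eq:discreteH}.

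\textbf{Main obstacle.} The delicate point is the bookkeeping of the cross terms so that the \emph{deterministic} contributions cancel exactly (otherwise one would get an $O(\tau)$ rather than $O(\tau^2)$ residual, which does not sum to something $\tau$-uniform). Concretely, the Taylor expansion of the potential $V$ around the midpoint $X_1^{n-1/2}$ must be matched term-by-term with the cross term $\tfrac{\tau}{2}(X_2^{n-1}+X_2^n, -f(X_1^{n-1/2}))$ coming from $\tfrac12(\|X_2^n\|^2-\|X_2^{n-1}\|^2)$; verifying that the first-order terms cancel and the remainder is genuinely $O(\tau^2\|X_2\|^2 \cdot \sup|V''|)$ requires care, and one must keep $f$ only $C^1$-controlled (the hypothesis gives Lipschitz $f$, i.e.\ bounded $V''$, so the second-order Taylor remainder is the best available — this is enough). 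A secondary technical nuisance is that the half-step quantity $X_1^{n-1/2}$ is not one of the iterates appearing in \eqref{eq:svnsteps2}, so its moment bounds must be derived on the fly from those of $X^{n-1}$ using \eqref{eq:inverse} and \eqref{eq:CFL}; this is routine but must not be skipped.
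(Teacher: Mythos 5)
Your route is genuinely different from the paper's, and as sketched it has a gap at its central step. For context: the paper performs no one-step energy expansion at all. It writes $\mathbb{E}[H(X^n)]-\mathbb{E}[H(X^0)]$ as the quadratic difference $\tfrac12\mathbb{E}[\vertiii{X^n}_{h,1}^2]-\tfrac12\mathbb{E}[\vertiii{X^0}_{h,1}^2]$ plus the potential difference, bounds the quadratic term directly by the a priori stability estimate \eqref{eq:stochsvstability} with $\beta=1$, and bounds the potential term by the mean value theorem together with $\|V'(\xi)\|=\|f(\xi)\|\le C(1+\|\xi\|)$ and \eqref{eq:stochsvstability} with $\beta=0$. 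The exponential factor in \eqref{eq:discreteH} is simply inherited from the Gronwall argument already carried out in Lemma \ref{lem:stabilitystoch}; no cancellation between the kinetic and potential increments is used or needed for this (rather crude, one-sided) bound.

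The gap in your plan is the assertion that the deterministic linear part of the update ``contributes nothing to $\mathbb{E}[H]$.'' Lemma \ref{lem:boundm} states that $M$ preserves the \emph{modified} norm $\|\cdot\|_{m,1}$, not the quadratic part $\tfrac12\|\Lambda_h^{1/2}X_1\|^2+\tfrac12\|X_2\|^2$ of $H$. A direct computation shows that the one-step change of this quadratic part along the scheme contains, besides the $f$- and $\Delta W^n$-cross terms you do account for, the purely linear defect $\tfrac{\tau^2}{8}\bigl(\|\Lambda_h^{1/2}X_2^{n}\|^2-\|\Lambda_h^{1/2}X_2^{n-1}\|^2\bigr)$. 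Under the CFL condition \eqref{eq:CFL} one only has $\tau^2\|\Lambda_h^{1/2}v\|^2\le C_s(\tau/h)^2\|v\|^2$, so for modes near the CFL limit this defect is of the \emph{same order as the energy itself}, not $O(\tau^2)$ small (a single mode with $\lambda_{h,j}=1$, $\tau=1$, $X_1=1$, $X_2=0$ already gains $1/8$ of a unit of energy in one deterministic step). Summed naively over $n$ steps this destroys the $\tau$-uniformity your telescoping argument requires. The argument can be rescued, either by observing that this particular defect telescopes exactly (so only boundary terms of size $\le\tfrac12\|X_2\|^2$ survive), or by running the whole computation in the conserved norm $\|\cdot\|_{m,1}$ and converting back through Lemma \ref{lem:normeq} at the end (at the price of a constant $C_2/C_1>1$ multiplying $\mathbb{E}[H(X^0)]$), but your sketch does neither, and ``the norms are comparable'' is not a substitute for exact conservation in a per-step telescoping argument. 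The remaining ingredients of your plan --- the midpoint Taylor cancellation of $V$ against the $f$-cross term using $X_1^n-X_1^{n-1}=\tfrac{\tau}{2}(X_2^{n-1}+X_2^n)$, the vanishing of the martingale cross terms in expectation, and the moment bound for the half step $X_1^{n-1/2}$ via \eqref{eq:inverse} and \eqref{eq:CFL} --- are sound, and if the linear defect is handled as above your approach would in fact yield a sharper, trace-formula-like statement than the one proved in the paper.
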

\begin{proof} 
We first note that 
\begin{equation} \label{eq:HminusH0}
\begin{split}
\mathbb{E}\left[ H(X^n)\right] - \mathbb{E}\left[ H(X^0)\right] 
 =\,& \frac{1}{2} \mathbb{E}\left[ \vertiii{X^n}_{h,1}^2 \right] -\frac{1}{2}\mathbb{E}\left[ \vertiii{X^0}_{h,1}^2\right]\\
& + \mathbb{E}\left[\int_{\cal D}(V(X_1^n) - V(X_1^0)){\rm d}x\right].
\end{split}
\end{equation}
The first term in the above inequality is bounded from estimate \eqref{eq:stochsvstability} for $\beta=1$ 
\begin{equation*} 
 \mathbb{E}\left[\vertiii{X^n}_{h,1}^2\right] \leq C\exp(2Ct_n)\left( t_n \|Q^{1/2}\|_{\rm HS}^2 + t_n^2\right).
\end{equation*}
For the third term in \eqref{eq:HminusH0}, using the mean value theorem we obtain 
\begin{equation*} 
\begin{split} 
 \mathbb{E}\left[\|V(X_1^n) - V(X_1^0)\|_{L_1({\cal D})}\right] \leq  \,& \mathbb{E}\left[\|V(X_1^n) - V(X_1^0)\|_{L_2({\cal D})}\right] \\ 
\leq \, & C \left(\mathbb{E}\left[\|V'(\xi)(X_1^n - X_1^0)\|^2_{L_2({\cal D})}\right]\right)^{1/2}.
\end{split}
\end{equation*}
Recalling that $V'(u) = -f(u)$, we have by \eqref{eq:assumf} $\|V'(\xi) \|=\|f(\xi)\| \leq C(1+\| \xi\|)$. Since $\xi\in (X^0,X^n)$, we have by triangle inequality and estimate  \eqref{eq:stochsvstability} for $\beta=0$  
\begin{equation*} 
\begin{split} 
 \mathbb{E}\left[\|V(X_1^n) - V(X_1^0)\|_{L_1({\cal D})}\right] \leq  C\exp(2Ct_n)\left( t_n \|\Lambda^{-1/2}Q^{1/2}\|^2_{\rm HS} + t_n^2\right).
\end{split}
\end{equation*}
The above completes the proof of \eqref{eq:discreteH}.
\end{proof}

In the case of the linear stochastic wave equation \eqref{eq:linearsto}, the discrete energy of the temporal approximation is given by,
\begin{equation*}
{\cal E}^n= \frac{1}{2}\|\Lambda_h^{1/2} Y^{n}_1\|^2 + \frac{1}{2}\|Y^n_2\|^2
\end{equation*}
 where we recall that  $Y^n=[Y_1^n,Y^n_2]^T$ is the numerical approximation in \eqref{eq:linearprob}.
We also introduce a so-called modified energy
\begin{equation*}
{\cal E}^n_m = \frac{1}{2}\left\|( I-\frac{\tau^2}{4}\Lambda_h)^{-1/2}\Lambda_h^{1/2} Y^{n}_1\right\|^2 + \frac{1}{2}\left\|Y^n_2\right\|^2= \| Y^n\|^2_{m,1}.
\end{equation*}

 Theorem 5.1 in \cite{cohen2013} proves that the  expected value of the energy of the exact semi-discrete solution to the linear stochastic equation grows linearly with time t.
 We observe that the SVM applied to this problem preserves the linear growth of the expected value of the modified energy ${\cal E}^n_m$ with the time t. 
\begin{theorem}
\label{thm:linearen}
 Let the assumptions in Lemma \ref{lem:stabilitystoch} be fulfilled for $\beta=1$. Then under the CFL condition \eqref{eq:CFL},  the expected value of the modified energy ${\cal E}^n_m$ satisfies 
\begin{equation*} 
\mathbb{E}\left[ {\cal E}^n_m\right] = \expect{ \left\| Y^0\right\|_{m,1}^2}+ t_n\sum_{k = 0}^\infty\left\|D P_hQ^{1/2}e_k\right\|_{m,1}^2.
\end{equation*}
Further, the expected value of the discrete energy ${\cal E}^n$
is bounded, at $t_n=n\tau$,  by
\begin{equation*} 
\mathbb{E}\left[ {\cal E}^n\right]\leq C\left( \mathbb{E}\left[ \vertiii{Y^0}_{h,1}^2\right] +  t_n\|Q^{1/2}\|^2_{\rm HS}\right), 
\end{equation*}
where $C$ is a constant independent of $h$, $\tau$ and $n$.
\end{theorem}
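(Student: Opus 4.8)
The plan is to handle the two assertions in turn, using throughout the two structural facts already established for the time integrator: that $M$ is an isometry for $\|\cdot\|_{m,1}$ (Lemma~\ref{lem:boundm}) and that $\|\cdot\|_{m,1}$ and $\vertiii{\cdot}_{h,1}$ are equivalent with mesh- and step-independent constants (Lemma~\ref{lem:normeq}). The key observation that glues everything together is that, by the explicit forms of the two energies, $\mathcal{E}^n = \tfrac12\vertiii{Y^n}_{h,1}^2$ and $\mathcal{E}^n_m = \|Y^n\|_{m,1}^2$, so both quantities are squared norms of the numerical solution in the relevant spaces.

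For the exact formula for $\mathbb{E}[\mathcal{E}^n_m]$, I would start from the recursion \eqref{eq:linearprob}, write $\Delta W^j = \int_{t_{j-1}}^{t_j}{\rm d}W(s)$, and expand $\|Y^n\|_{m,1}^2 = \|M^nY^0 + \sum_{j=1}^n M^{n-j}DP_h\Delta W^j\|_{m,1}^2$. Since $Y^0$ is $\mathcal{F}_0$-measurable and the stochastic integrals over the disjoint intervals $[t_{j-1},t_j]$ have mean zero and are mutually independent, all cross terms vanish in expectation, leaving $\mathbb{E}[\|Y^n\|_{m,1}^2] = \mathbb{E}[\|M^nY^0\|_{m,1}^2] + \sum_{j=1}^n \mathbb{E}[\|M^{n-j}DP_h\Delta W^j\|_{m,1}^2]$. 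Applying Lemma~\ref{lem:boundm} strips off every power of $M$, and It\^o's isometry \eqref{eq:itosisometry} in the Hilbert space $\mathcal{H}_h^1$ equipped with $\langle\cdot,\cdot\rangle_{m,1}$ (with constant-in-time integrand $DP_h$) turns each summand into $(t_j-t_{j-1})\sum_{k=0}^\infty\|DP_hQ^{1/2}e_k\|_{m,1}^2$. Summing the telescoping increments yields $t_n$ and hence the claimed identity; this is precisely the computation of the linear part of the proof of Lemma~\ref{lem:stabilitystoch}, specialized to $\beta=1$ and retaining the constant rather than discarding it.

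For the bound on $\mathbb{E}[\mathcal{E}^n]$, I would combine the identity just proved with $\mathcal{E}^n=\tfrac12\vertiii{Y^n}_{h,1}^2$ and the left inequality of \eqref{eq:normeq} to get $\mathbb{E}[\mathcal{E}^n]\leq \tfrac{1}{2C_1}\bigl(\mathbb{E}[\|Y^0\|_{m,1}^2] + t_n\sum_{k=0}^\infty\|DP_hQ^{1/2}e_k\|_{m,1}^2\bigr)$. The first term is $\leq \tfrac{C_2}{2C_1}\mathbb{E}[\vertiii{Y^0}_{h,1}^2]$ by the right inequality of \eqref{eq:normeq}. For the series I would again use the right inequality of \eqref{eq:normeq}, expand $\vertiii{DP_hQ^{1/2}e_k}_{h,1}^2 = \tfrac{\tau^2}{4}\|P_hQ^{1/2}e_k\|_{h,1}^2 + \|P_hQ^{1/2}e_k\|_{h,0}^2$, and control the first piece by the inverse estimate \eqref{eq:inverse} together with the CFL condition \eqref{eq:CFL}: $\tfrac{\tau^2}{4}\|P_hQ^{1/2}e_k\|_{h,1}^2 \leq \tfrac{\tau^2 C_s}{4h^2}\|P_hQ^{1/2}e_k\|^2 \leq \|P_hQ^{1/2}e_k\|^2$, since $\tau^2 C_s/(4h^2) < C_{\rm CFL}^2 C_s/4 < 1$. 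Summing over $k$ and using that $P_h$ is a contraction, so $\|P_hQ^{1/2}\|_{\rm HS}\leq\|Q^{1/2}\|_{\rm HS}$, the series is bounded by $2\|Q^{1/2}\|_{\rm HS}^2$, which gives the stated estimate with $C$ depending only on $C_1$ and $C_2$.

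The computation is routine once these ingredients are assembled; the only point needing care is matching the noise contribution to $\|Q^{1/2}\|_{\rm HS}^2$, i.e.\ recognizing that the CFL condition is exactly what absorbs the $h^{-1}$ blow-up of the inverse estimate against the factor $\tau$ coming from the velocity component of $D$, so that no mesh- or step-dependent constant survives. The isometry of $M$ in $\|\cdot\|_{m,1}$ is what makes the modified-energy statement an exact identity rather than an inequality, mirroring the role of the trace formula of Proposition~\ref{prop:traceformula} at the semi-discrete level.
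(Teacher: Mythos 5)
Your proposal is correct and follows essentially the same route as the paper: the identity for $\mathbb{E}[\mathcal{E}^n_m]$ is obtained exactly as you describe (isometry of $M$ in $\|\cdot\|_{m,1}$ plus It\^o's isometry applied to \eqref{eq:linearprob}), and the bound on $\mathbb{E}[\mathcal{E}^n]$ is in the paper simply cited from estimate \eqref{eq:stablinear} with $\beta=1$, whose own proof is precisely the norm-equivalence/inverse-estimate/CFL computation you spell out (including the reduction of $\|P_hQ^{1/2}\|_{\rm HS}$ to $\|Q^{1/2}\|_{\rm HS}$, which for $\beta=1$ needs only that $P_h$ is an $L_2$-contraction rather than \eqref{eq:lambdahlambda}).
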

\begin{proof} 
 We have for \eqref{eq:linearprob} by using the It\^{o}'s isometry \eqref{eq:itosisometry} and \eqref{eq:boundm}
\begin{equation*}
\begin{split}    
\expect{\|Y^n\|_{m,1}^2} &= \expect{ \|M^n Y^0\|_{m,1}^2}+ \expect{\left\|\sum_{j = 1}^n\int_{t_{j-1}}^{t_{j}} M^{n-j} D P_h dW(s)\right\|_{m,1}^2} \\
&= \expect{ \|Y^0\|_{m,1}^2}+t_n\sum_{k = 0}^\infty\left\|D P_hQ^{1/2}e_k\right\|_{m,1}^2.\\
\end{split}
\end{equation*}
This completes the proof for the bound on ${\cal E}^n_m$.

The proof for the bound on ${\cal E}^n$ follows from estimate \eqref{eq:stablinear} for $\beta=1$.
\end{proof}

 \section{Numerical experiments}
\label{sec:numexp}
We consider the following 1-dimensional Sine-Gordon equation
\begin{equation} \label{eq:example1}
\begin{aligned}
 &{\rm d}\dot{u}=\Delta u {\rm d}t - \sin(u){\rm d}t +{\rm d}W  &&{\rm in}\;\; {\cal D}\times[0,1],\\
&\; u= 0  &&{\rm on } \; \;\partial {\cal D} \times[0,1],\\
&\;  u(\cdot,0) =0,\;\;\dot{u}(\cdot,0)= \sin(\pi x) & &{\rm in} \;\; {\cal D},
\end{aligned}
\end{equation}
where ${\cal D} = (0,1)$. We approximate  the solution of (\ref{eq:example1}) with the dG finite element method (\ref{eq:dgsde}) in space.
Letting $X^{n}$ be the numerical approximation of  (\ref{eq:dgsde}) at discrete times $t_n=\tau n$, $n=1,\dots, N$,  we consider the following integrators:
\begin{enumerate}[i.] 
\item  The stochastic trigonometric method (STM), see \cite{cohen2013},
\begin{equation*} 
X^{n} = E_h(\tau)X^{n-1} + E_h(\tau)BP_hf(X_1^{n-1})\tau + E_h(\tau)BP_h\Delta W^n,
\end{equation*}
where $E_h(\tau)$ is the $C_0$-semigroup defined in (\ref{semigroupdiscrete}) and $B=[0 \,\,  I]^T$. The strong convergence rates for the full discretization are ${\cal O}(\tau^{\min(\beta,1)} + h^{\frac{2}{3}\beta})$, see \cite[Theorem 4]{anton2015}.
\item The semi-implicit Euler-Maruyama method (SEM)
\begin{equation*}
X^{n} =X^{n-1} + \tau A_hX^{n}  + BP_hf(X_1^{n-1})\tau+ BP_h \Delta W^{n}.
\end{equation*}
We refer to \cite{hausblaus2003}  for the mean-square errors of this scheme applied to stochastic parabolic partial differential equations.
\item The stochastic position Verlet method  (SVM) considered here.
\end{enumerate} 

\medskip
For our numerical experiments, we set $Q=\Lambda^{-s}$, $s\in\mathbb{R}$, then using the asymptotic behaviour of the eigenvalues of $\Lambda$, $\lambda_j\sim j^{2/d}$, where $d$ is the dimension of the domain ${\cal D}$, see \cite{Weyl1911}, we get
\begin{equation*} 
\|\Lambda^{(\beta-1)/2}Q^{1/2}\|_{\rm HS}^2=\sum\limits_{j=1}^{\infty}\lambda_j^{(\beta-1-s)}\approx\sum\limits_{j=1}^{\infty}j^{\frac{2}{d}(\beta-1-s)}.
\end{equation*}
The above series converges if and only if $\beta<1 +s-d/2$.

\begin{figure}
\centering
\begin{subfigure}[b]{0.49\textwidth}
\includegraphics[width=\textwidth]{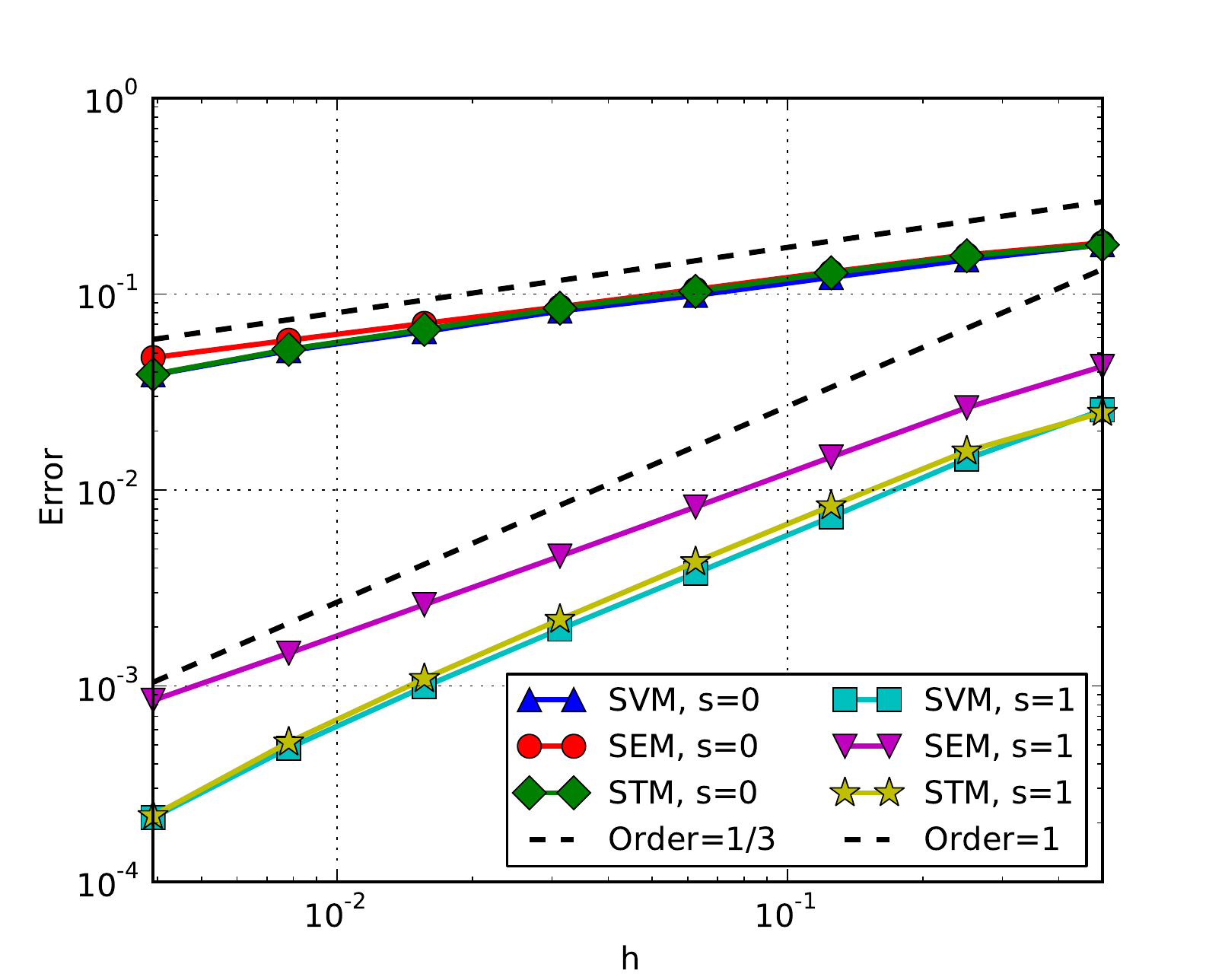}
\caption{}
\end{subfigure}
\begin{subfigure}[b]{0.49\textwidth}
\includegraphics[width=\textwidth]{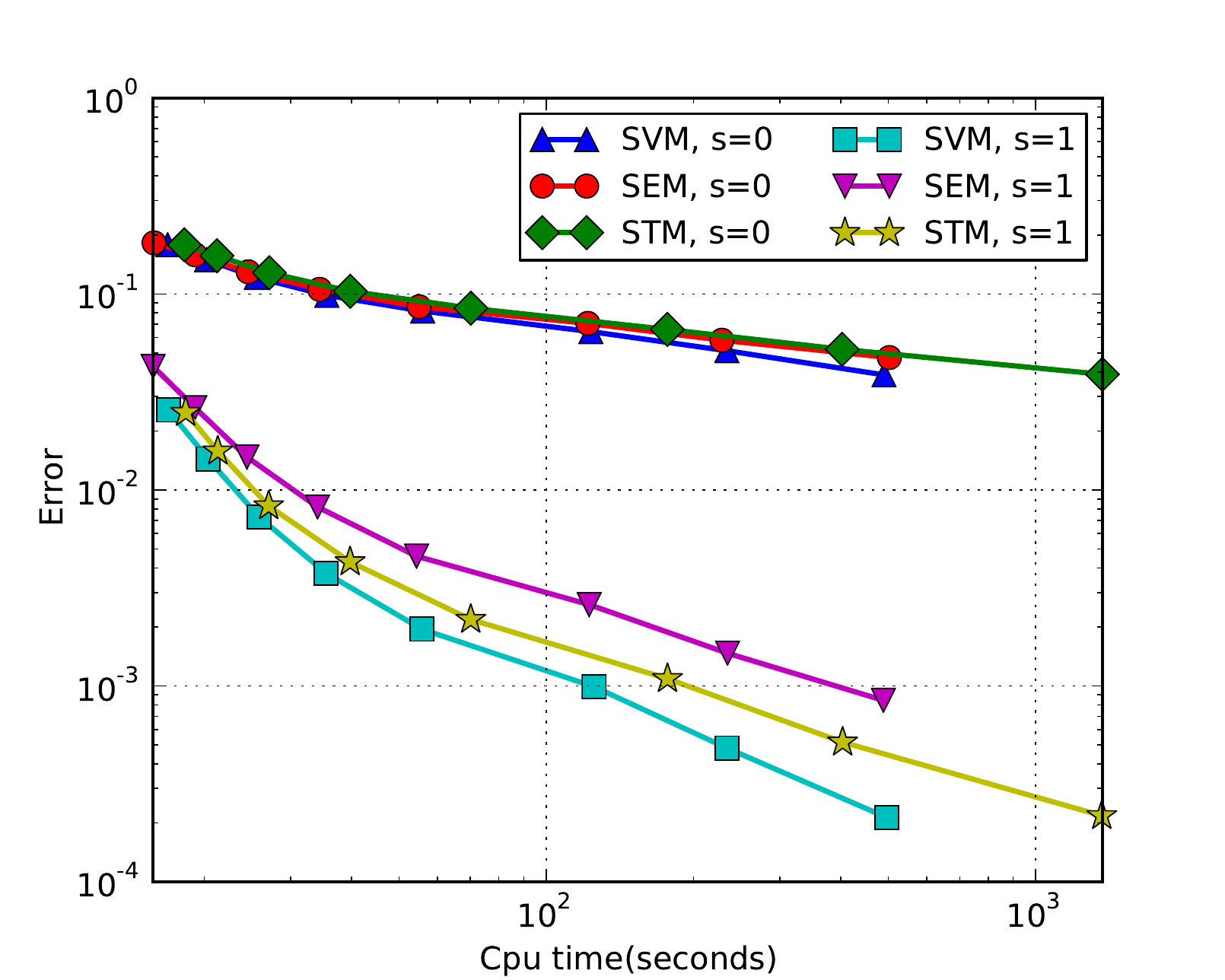}
\caption{}
\end{subfigure}
\caption{Plots (a)  exhibits the spatial rates of convergence of the stochastic position Verlet method (SVM), the stochastic trigonometric method (STM), and the  semi-implicit Euler-Maruyama method (SEM)
 and (b) the  efficiency of these time integrators  for space-time white noise, i.e., $Q=\Lambda^{-s}, s=0,$ and for correlated  noise, i.e., $Q= \Lambda^{-s}, s=1$. }
\label{fig:error}
\end{figure}

 We examine  simultaneously the spatial error and the temporal error for the the displacement for $s=0$, which corresponds to space-time white noise, and $s=1$, which corresponds to correlated noise.  We choose different timesteps $\tau_i=2^{-i}, i=2, \dots,8,$ and different mesh sizes $h_i=2^{-i},\,i=4, \dots ,10$. Furthermore, we take the exact solution to be the dG approximation on  a fine mesh with mesh size $h_{\rm exact}=2^{-10}$ in space and the time integration is done by STM with time step $\tau_{\rm exact}=2^{-12}$. We consider $M=100$ realisations to compute the expected values. Figures \ref{fig:error} display the  strong convergence
 rates and the efficiency of the above numerical schemes. The spatial  mean-square error is defined at final time $T=1$ as 
\begin{equation*}
{\rm Error} =\left( \frac{1}{M}\sum\limits_{m=1}^{M}\| u_{h,m}(\cdot,1) -u_m^{\rm ref}(\cdot,1)\|^2 \right)^{1/2} \approx \left(\mathbb{E}\left[ \| u_h(\cdot,1) -u^{\rm ref}(\cdot,1)\|^2\right]\right)^{1/2}.
\end{equation*}
 We only present the strong numerical error vs. the mesh size $h$ since the spatial convergence rates dominate the convergence rates  of the temporal discretization see Theorem \ref{the:spaceandtime}. We observe that the expected convergence rates are confirmed for SVM and STM for both space-time white noise and correlated noise. Further, SVM is the most efficient of the three methods.

In addition, we investigate the spatial error for  polynomials of total degree equal to $p=2$. Recalling estimate (\ref{stoerror1}), we have that the convergence rate is ${\cal O}(h^{\frac{3}{4}\beta})$ for the displacement. 
 We consider correlated noise by setting $Q=\Lambda^{-1}$. We choose different mesh sizes $h_i=2^{-i},\,i=2, \dots, 7$. We take the exact solution to be the dG approximation on  a fine mesh with mesh size $h_{\rm exact}=2^{-9}$ in space and the time integration is done by the stochastic Verlet scheme with time step $\tau_{\rm exact}=2^{-11}$. Figure \ref{fig:errorp} exhibits the spatial rate of convergence and the efficiency for polynomials of total degree equal to $p=1,2$. Again, we consider $M=100$ realisations to calculate the expected values. We observe that the second-order polynomials are more accurate and efficient when used for the linear problem. 

Finally, we are concerned with the energy results given in Section \ref{sec:energy}. In order to illustrate the results from Section  \ref{sec:energy}, we set $Q=\Lambda^{-s}, s=1,$ and we choose $h=2^{-5}$. For the stochastic Verlet method  we choose the timestep $\tau= 1/6400$. For STM  and SEM we choose timestep $\tau=1/500$. Figure \ref{fig:enresults} displays the  expected value of the Hamiltonian along the numerical solutions of 
\eqref{eq:example1} and the linear analogue of it over the time interval $[0,50]$. Further, we take $M=8000$ samples to approximate the expected values of the energy of the schemes. We observe that  SVM reproduces the linear growth of the exact energy, although this is not expected from Theorems \ref{thm:energynon} and \ref{thm:linearen}. In case of the linear stochastic wave equation, STM preserves the linear growth of the expected value of the energy, see \cite{cohen2013}, as Figure \ref{fig:enresults} (a) verifies.  The unsatisfactory behaviour of SEM has also been previously observed when applied   the nonlinear stochastic wave equation \cite{anton2015} and the linear analogue of it \cite{cohen2013}.

 All the numerical experiments were performed in Python using the finite element  software library Fenics \cite{fenics}. 
\begin{figure}
\centering
\begin{subfigure}[b]{0.49\textwidth}
\includegraphics[width=\textwidth]{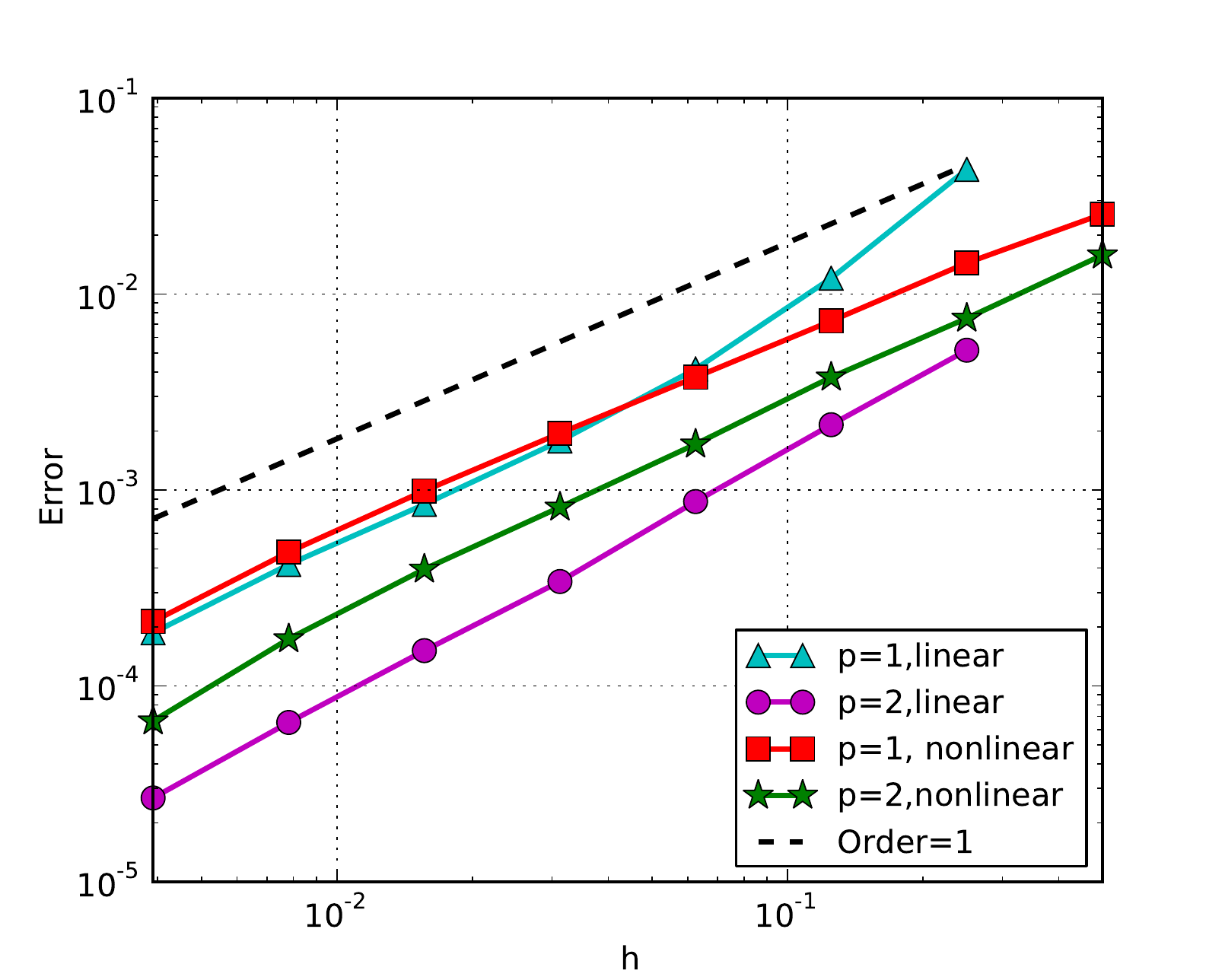}
\caption{}
\end{subfigure}
\begin{subfigure}[b]{0.49\textwidth}
\includegraphics[width=\textwidth]{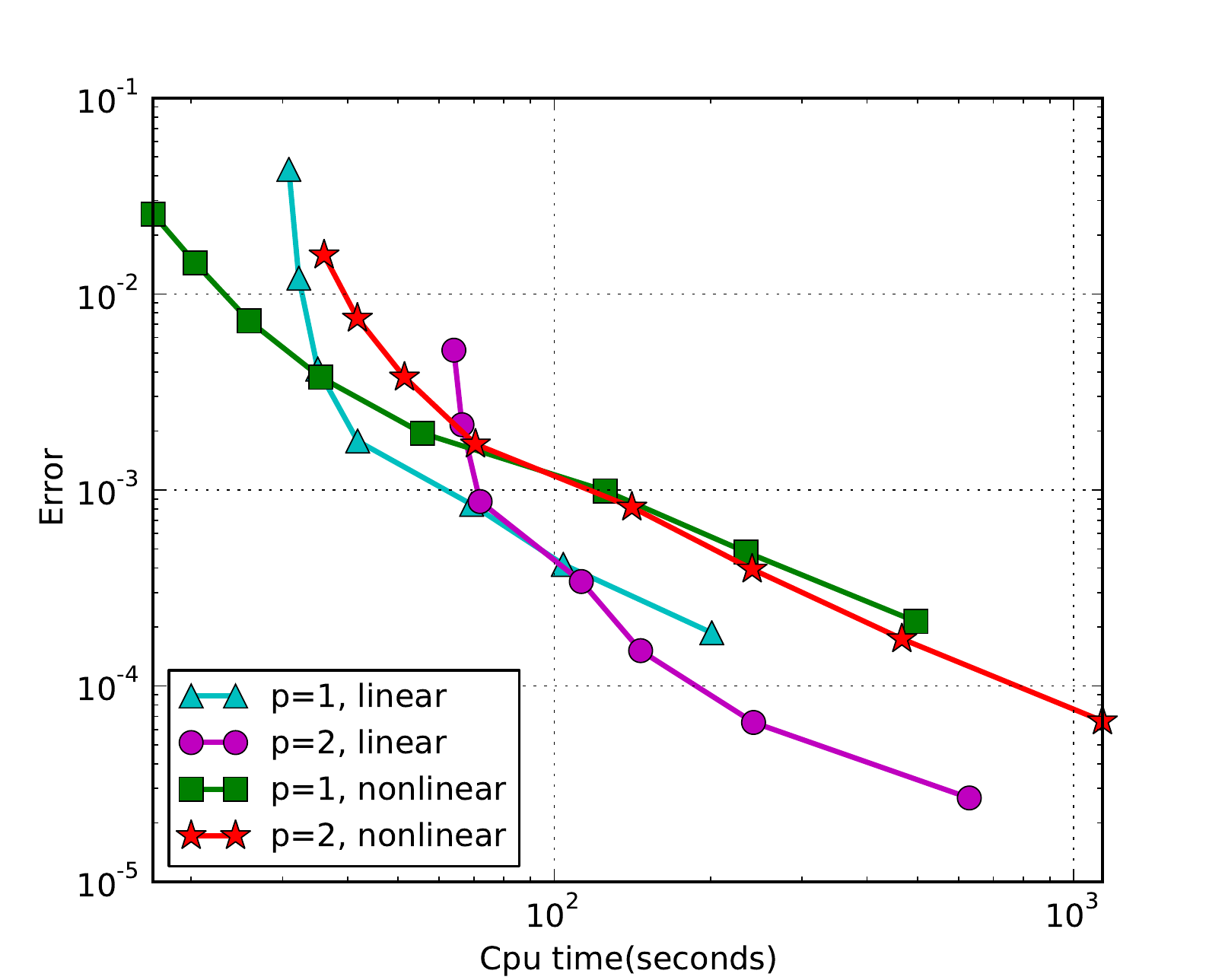}
\caption{}
\end{subfigure}
\caption{Plots (a)  show the strong convergence rates in space  and the efficiency for first-order degree polynomials (p=1) and (b) second-order degree polynomials (p=2) for \eqref{eq:example1} and the linear analogue of it.}
\label{fig:errorp}
\end{figure}

\begin{figure}
\centering
\begin{subfigure}[b]{0.49\textwidth}
\includegraphics[width=\textwidth]{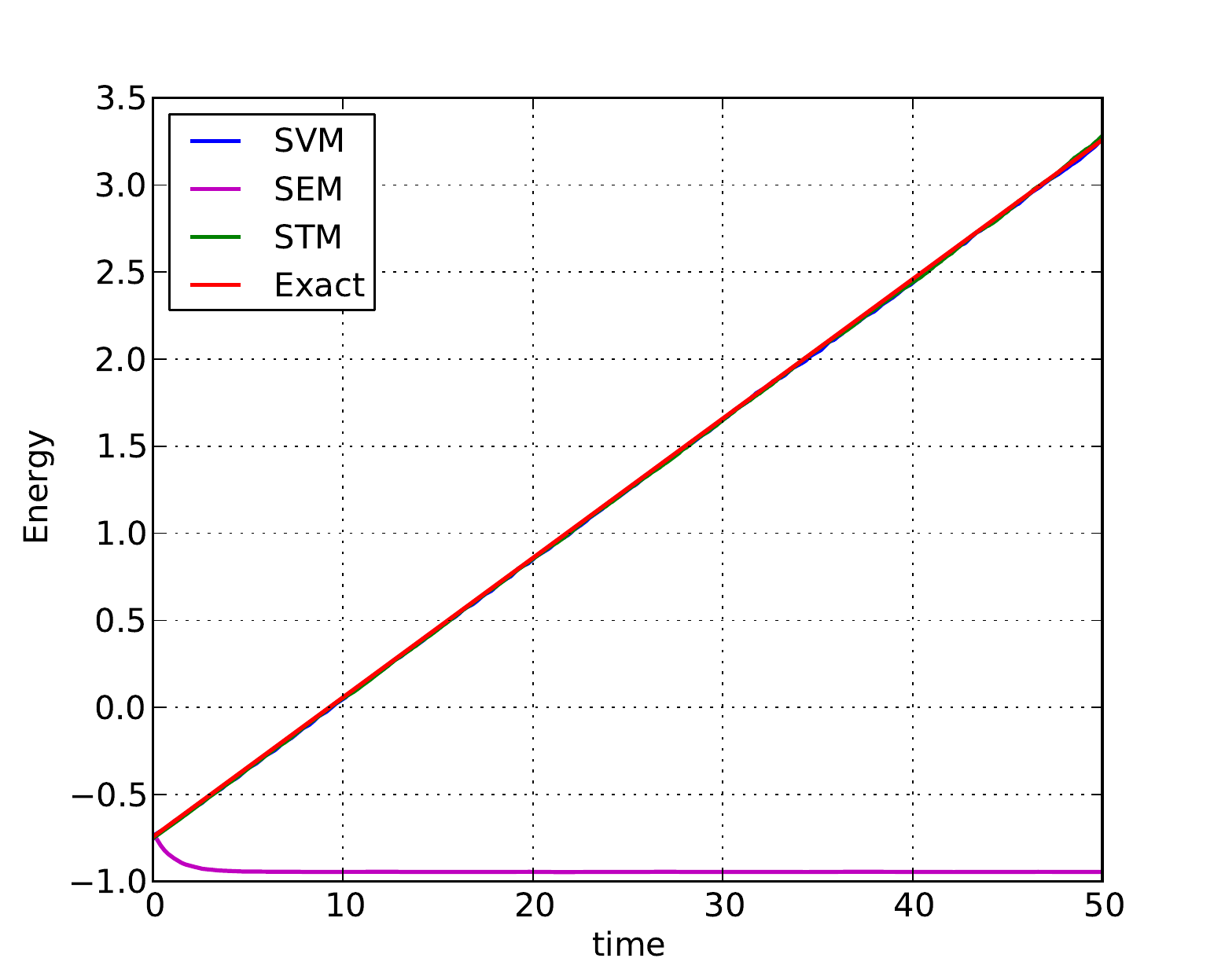}
\caption{}
\end{subfigure}
\begin{subfigure}[b]{0.49\textwidth}
\includegraphics[width=\textwidth]{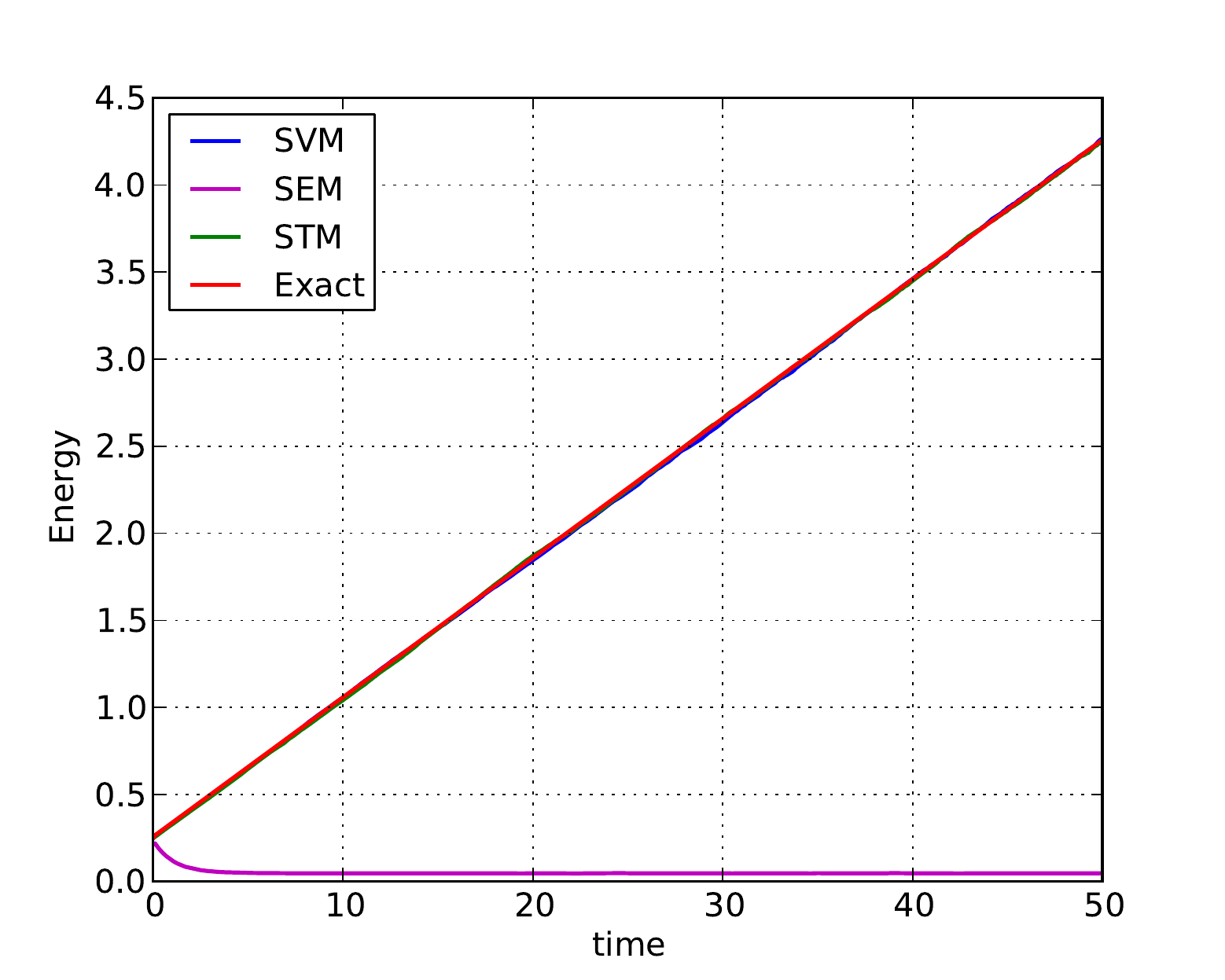}
\caption{}
\end{subfigure}
\caption{Plot (a) shows the expected values of the Hamiltonian $H$ of \eqref{eq:example1}  along the numerical solutions given by the three time stepping schemes presented above.
Plot (b) illustrates the expected values of the discrete energy ${\cal E}^n$  of the linear analogue of problem \eqref{eq:example1}.
 }
\label{fig:enresults}
\end{figure}

\bibliographystyle{siam}
\bibliography{biblio}
\end{document}